\theoremstyle{plain}
\newtheorem{theorem}{Theorem}[section]
\newtheorem{conjecture}[theorem]{Conjecture}
\newtheorem{proposition}[theorem]{Proposition}
\newtheorem{lemma}[theorem]{Lemma}
\newtheorem{corollary}[theorem]{Corollary}
\newtheorem{problem}[theorem]{Problem}
\theoremstyle{definition}
\newtheorem{definition}[theorem]{Definition}
\theoremstyle{remark}
\newtheorem{remark}[theorem]{Remark}
\newtheorem{example}[theorem]{Example}
\def\Z{\mathbb{Z}}
\def\R{\mathbb{R}}
\def\C{\mathbb{C}}
\def\P{\mathbb{P}}
\def\F{\mathbb{F}}
\def\M{\mathcal{M}}
\def\O{\mathcal{O}}
\def\J{\mathcal{J}}
\def\conv{\mathrm{conv}}
\def\inte{\mathrm{int}}
\def\bnd{\mathrm{bnd}}
\def\refi{\mathrm{ref}}
\def\coh{\mathrm{coh}}
\def\lift{\mathrm{lift}}
\def\cube{\mbox{\,\mancube\,}}
\title{Positive Grassmannian and polyhedral subdivisions}
\author{Alexander Postnikov}
\address{Department of Mathematics, Massachusetts Institute of Technology,
77 Massachusetts Avenue, Cambridge MA 02139}
\email{apost@math.mit.edu}
\subjclass[2010]{Primary 05E; secondary 52B, 52C,  13F60, 81T}
\date{February 27, 2018}
\keywords{Total positivity, positive Grassmannian, hypersimplex, 
matroids, positroids, cyclic shifts, Grassmannian graphs, plabic graphs, 
polyhedral subdivisions, triangulations, zonotopal tilings, associahedron,
fiber polytopes, Baues poset, generalized Baues problem, 
flips, cluster algebras, weakly separated collections, 
scattering amplitudes, 
amplituhedron, membranes}
\begin{document}

\begin{abstract}
The nonnegative Grassmannian is a cell complex with rich 
geometric, algebraic, and combinatorial structures.
Its study involves interesting combinatorial objects, such as 
positroids and plabic graphs.
Remarkably, the same combinatorial structures appeared in many 
other areas of mathematics and physics, e.g., in 
the study of cluster algebras, scattering amplitudes, and solitons.
We discuss new ways to think about these structures.
In particular, we identify plabic graphs and more general Grassmannian graphs 
with polyhedral subdivisions
induced by 2-dimensional projections of hypersimplices.
This implies a close relationship between the positive Grassmannian
and the theory of fiber polytopes and the generalized Baues problem.
This suggests natural extensions of objects related 
to the positive Grassmannian.
\end{abstract}

\maketitle

\section{Introduction}
\label{sec:in}

The geometry of the Grassmannian $Gr(k,n)$ is related to combinatorics
of the hypersimplex $\Delta_{kn}$.  Gelfand, Goresky, MacPherson, Serganova 
\cite{GGMS} studied the hypersimplex
as the moment polytope for the torus action on the complex Grassmannian.
In this paper we highlight new links between
geometry of the {\it positive Grassmannian\/} 
and combinatorics of the hypersimplex $\Delta_{kn}$.

\smallskip

\cite{GGMS} studied the {\it matroid stratification\/} of the Grassmannian 
$Gr(k,n,\C)$, whose strata are the realization spaces of {\it matroids.}
They correspond to {\it matroid polytopes\/} living inside $\Delta_{kn}$.
In general, matroid strata are not cells.  In fact, according to Mn\"ev's universality 
theorem \cite{Mnev}, the matroid strata can be as complicated as any 
algebraic variety.  Thus the matroid stratification of the Grassmannian 
can have arbitrarily bad behavior.

There is, however, a semialgebraic 
subset of the real Grassmannian $Gr(k,n,\R)$, called the
{\it nonnegative Grassmannian} $Gr^{\geq 0}(k,n)$, 
where the matroid stratification exhibits a well
behaved combinatorial and geometric structure.  Its structure, which is 
quite rich and nontrivial, can nevertheless be described in explicit terms.  
In some way, the nonnegative Grassmannian is similar to a polytope.

The notion of a {\it totally positive matrix,} that is a matrix
with all positive minors, originated in pioneering works of Gantmacher, Krein
\cite{GK35}, and Schoenberg \cite{Sch30}.
Since then such matrices appeared in many areas of pure and applied mathematics.
Lusztig \cite{Lus94, Lus2_98, Lus1_98} 
generalized the theory of total
positivity in the general context of Lie theory.  He defined the positive part
for a reductive Lie group $G$ and a generalized flag variety $G/P$.  
Rietsch \cite{Rie1, Rie2} studied its cellular decomposition.
Lusztig's theory of total positivity has close links with his theory of canonical 
bases \cite{Lus90, Lus92, Lus93}
and Fomin-Zelevinsky's cluster algebras \cite{FZ3, FZ2, FZ4, BFZ, FZ5}.

\cite{Pos2} initiated a combinatorial approach to the study of the
positive Grassmannian.  The positive
(resp., nonnegative) Grassmannian $Gr^{>0}(k,n)$
($Gr^{\geq 0}(k,n)$) 
was described as the subset of the Grassmannian $Gr(k,n,\R)$ where
all Pl\"ucker coordinates are positive 
(resp., nonnegative).
This ``elementary'' definition agrees with  Lusztig's general notion
\cite{Lus2_98} of the positive part of $G/P$ in the case when $G/P=Gr(k,n)$.

The {\it positroid cells,} defined as the parts of matroid strata 
inside the nonnegative Grassmannian, turned out to be indeed cells. 
(The term ``positroid'' is an abbreviation for ``positive matroid.'')
The positroid cells form a CW-complex.
Conjecturally, it is a {\it regular\/} CW-complex, and the closure
of each positroid cell is homeomorphic to a closed ball.
This positroid stratification of $Gr^{\geq 0}(k,n)$ 
is the common refinement of $n$ {\it cyclically shifted\/} Schubert decompositions
\cite{Pos2}.
Compare this with the result \cite{GGMS} that the matroid stratification of $Gr(k,n)$
is the common refinement of $n!$ {\it permuted\/} Schubert decompositions.
The {\it cyclic shift\/} 
plays a crucial role in the study of the positive Grassmannian.
Many objects associated with the positive Grassmannian exhibit 
{\it cyclic symmetry.}

Positroid cells were identified in \cite{Pos2} with many combinatorial
objects, such as decorated permutation, Grassmann necklaces, etc.
Moreover, an explicit birational subtraction-free 
parametrization of each cell was described
in terms of {\it plabic graphs,} that is, {\it planar bicolored\/} graphs,
which are certain graphs embedded in a disk with vertices colored in two colors. 

Remarkably, the combinatorial structures that appeared in the study 
of the positive Grassmannian
also surfaced and played an important role 
in many different areas of mathematics and physics.
Scott \cite{Sco1, Sco2} and \cite{OPS} linked
these objects with {\it cluster algebra\/} structure on the 
Grassmannian and with Leclerc-Zelevinsky's quasi-commutting families of 
{\it quantum minors\/} and {\it weakly separated collections.} 
Corteel and Williams \cite{CW} applied Le-diagrams (which correspond to positroids) 
to the study of the {\it partially asymmetric exclusion process\/}
(PASEP).  Knutson, Lam, and Speyer \cite{KLS} proved that 
the cohomology classes of the {\it positroid varieties\/}  
(the complexifications of the positroid cells) 
are given by the {\it affine Stanley symmetric functions,}
which are dual to Lapointe-Lascoux-Morse {\it $k$-Schur functions.}
They also linked positroids with {\it theory of juggling.}
Plabic graphs appeared in works of 
Chakravarty, Kodama, and Williams \cite{ChK, KoW11, KoW12}
as {\it soliton solutions\/} of the Kadomtsev-Petviashvili (KP) equation,
which describes nonlinear waves.
Last but not least, plabic graphs appeared under the name of {\it on-shell diagrams}
in the work by Arkani-Hamed et al \cite{ABCGPT} on {\it scattering amplitudes\/}
in $\mathcal{N}=4$ supersymmetric Yang-Mills (SYM) theory.  
They play a role somewhat similar to Feynman diagrams,
however, unlike Feynman diagrams, they represent on-shell processes
and do not require introduction of virtual particles.

\medskip

In this paper, we review some of the main constructions and results from
\cite{Pos2, PSW, OPS} related to the positive Grassmannian.  
We extend these constructions in the language of {\it Grassmannian
graphs.}  The parametrization of a positroid cell in $Gr^{\geq 0}(k,n)$ 
given by a Grassmannian graph can be thought of as a way to ``glue'' 
the positroid cell out of ``little positive Grassmannians'' associated with 
vertices of the graph.  The idea to think about parametrizations 
of cells as gluings of Grassmannians came originally from physics \cite{ABCGPT},
where vertices of on-shell diagrams (i.e., plabic graphs) were viewed as 
little Grassmannians $Gr(1,3)$ and $Gr(2,3)$.

We link this construction of parametrizations of $Gr^{>0}(k,n)$ given by
Grassmannian graphs with the study of polyhedral subdivisions induced by
$2$-dimensional cyclic projections $\pi:\Delta_{kn}\to Q$ of the hypersimplex.
Reduced Grassmannian graphs parametrizing the positive Grassmannian $Gr^{>0}(k,n)$ 
turn out to be in bijection with $\pi$-induced polyhedral subdivisions.
Thus gluing of Grassmannians from smaller Grassmannians is equivalent to
subdividing polytopes into smaller polytopes.  
The study of $\pi$-induced subdivisions for projections of polytopes 
is the subject of Billera-Sturmfels' theory \cite{BS} of {\it fiber polytopes\/} 
and the {\it generalized Baues problem\/} (GBP) posed by 
Billera, Kapranov, Sturmfels \cite{BKS}.
We also mention the result of Galashin~\cite{Gal} where plabic
graphs are identified with sections of {\it zonotopal tilings,} 
and the construction from the joint work \cite{LP} with Lam on {\it polypositroids\/}
where plabic graphs are viewed as {\it membranes,} 
which are certain 2-dimensional surfaces in higher
dimensional spaces. 

The correspondence between parametrizations 
of the positive Grassmannian and polyhedral subdivisions leads 
to natural generalizations and conjectures.
We discuss a possible extension of constructions 
of this paper to ``higher positive Grassmannians'' and amplituhedra 
of Arkani-Hamed and Trnka \cite{AT}.

\medskip

I thank 
Federico Ardila,
Nima Arkani-Hamed, 
Arkady Berenstein,
David Bernstein,
Lou Billera,
Jacob Bourjaily, 
Freddy Cachazo,
Miriam Farber, 
Sergey Fomin, 
Pavel Galashin, 
Israel Moiseevich Gelfand,
Oleg Gleizer,
Alexander Goncharov, 
Darij Grinberg,
Alberto Gr\"unbaum, 
Xuhua He, 
Sam Hopkins,
David Ingerman, 
Tam\'as K\'alm\'an,
Mikhail Kapranov,
Askold Khovanskii,
Anatol Kirillov,
Allen Knutson, 
Gleb Koshevoy,
Thomas Lam, 
Joel Lewis,
Gaku Liu,
Ricky Liu,
George Lusztig, 
Thomas McConville,
Karola M\'esz\'aros,
Alejandro Morales,
Gleb Nenashev,
Suho Oh, 
Jim Propp, 
Pavlo Pylyavskyy,
Vic Reiner,
Vladimir Retakh,
Konni Rietsch, 
Tom Roby,
Yuval Roichman,
Paco Santos,
Jeanne Scott, 
Boris Shapiro,
Michael Shapiro, 
David Speyer,
Richard Stanley, 
Bernd Sturmfels, 
Dylan Thurston, 
Jaroslav Trnka, 
Wuttisak Trongsiriwat,
Vladimir Voevodsky,
Lauren Williams,
Hwanchul Yoo,
Andrei Zelevinsky,
and G\"unter Ziegler
for insightful  conversations.
These people made a tremendous contribution to the study of the
positive Grassmannian and related combinatorial, algebraic, geometric, topological, 
and physical structures.  
Many themes we discuss here are from past and future
projects with various subsets of these people.

\section{Grassmannian and matroids}

Fix integers $0\leq k\leq n$. Let $[n]:=\{1,\dots,n\}$ and 
$[n]\choose k$ be the set of $k$-element subsets of $[n]$.

The {\it Grassmannian\/} $Gr(k,n)=Gr(k,n,\F)$ over a field $\F$
is the variety of $k$-dimensional linear subspaces in $\F^n$. 
More concretely, $Gr(k,n)$ is the space of 
$k\times n$-matrices of rank $k$ modulo the left action of $GL(k)=GL(k,\F)$.
Let $[A]=GL(k)\, A$ be the element of $Gr(k,n)$ 
represented by matrix $A$.

Maximal minors $\Delta_I(A)$ of such matrices $A$, 
where $I\in {[n]\choose k}$, form projective coordinates on 
$Gr(k,n)$, called the {\it Pl\"ucker coordinates}.
For $[A]\in Gr(k,n)$, let 
$$
\M(A):=\{I\in {[n]\choose k}\mid \Delta_I(A)\ne 0\}.
$$
The sets of the form $\M(A)$ are a special kind of {\it matroids,}
called {\it $\F$-realizable matroids.} 
{\it Matroid strata\/} are the realization spaces 
of realizable matroids $\M\subset {[n]\choose k}$: 
$$
S_\M:=\{[A]\in Gr(k,n)\mid \M(A)=\M\}.
$$
The {\it matroid stratification\/} is the disjoint decomposition
$$
Gr(k,n) = \bigsqcup_{\M \textrm{ realizable matroid}} S_\M\,.
$$
The {\it Gale order\/} ``$\preceq$'' (or the coordinatewise order) 
is the partial order on $[n]\choose k$  given by
$
\{i_1<\dots <i_k\}\preceq\{j_1<\cdots<j_k\},
\textrm{ if }i_r\leq j_r \textrm{ for }r\in[k].
$
Each matroid $\M$ has a unique minimal element
$I_{\min}(\M)$ with respect to the Gale order.

For $I\in{[n]\choose k}$, the {\it Schubert cell\/} $\Omega_I\subset Gr(k,n)$
is given by
$$
\Omega_I := \{[A]\in Gr(k,n)\mid I = I_{\min}(\M(A)\} = 
\bigsqcup_{\M:\, I=I_{\min}(\M)} S_\M.
$$
They form the {\it Schubert decomposition\/} $Gr(k,n)=\bigsqcup \Omega_{I}$.
Clearly, 
for a realizable matroid $\M$, we have 
$S_\M\subset \Omega_I$ if and only if $I=I_{\min}(\M)$.

The {\it symmetric group\/} $S_n$ acts on $Gr(k,n)$ by permutations 
$w([v_1,\dots,v_n])=[v_{w(1)},\dots,v_{w(n)}]$
of columns of $[A]=[v_1,\dots,v_n]\in Gr(k,n)$.

It is clear that, see \cite{GGMS}, the matroid stratification of $Gr(k,n)$ 
is the common refinement of the $n!$ permuted Schubert decompositions. 
In other words, each matroid stratum $S_\M$ is an intersection of
permuted Schubert cells:
$$
S_\M =\bigcap_{w\in S_n} w(\Omega_{I_w}).
$$
Indeed, if we know the minimal elements of a set $\M\subset {[n]\choose k}$
with respect to all $n!$ orderings of $[n]$, we know the set $\M$ itself.

\section{Positive Grassmannian and positroids}

Fix the field $\F=\R$.  Let $Gr(k,n)=Gr(k,n,\R)$
be the real Grassmannian.

\begin{definition} {\rm \cite[Definition~3.1]{Pos2}}  \
The {\it positive Grassmannian\/} $Gr^{>0}(k,n)$ (resp., 
{\it nonnegative Grassmannian} $Gr^{\geq 0}(k,n)$)
is the semialgebraic set of elements $[A]\in Gr(k,n)$ 
represented by $k\times n$ matrices $A$ with all positive 
maximal minors $\Delta_I(A)>0$ 
(resp., all nonnegative maximal minors $\Delta_I(A)\geq 0$). 
\end{definition}

This definition agrees with Lusztig's general definition \cite{Lus2_98}
of the positive part of a generalized flag variety $G/P$ in the case 
when $G/P=Gr(k,n)$.

\begin{definition} {\rm \cite[Definition~3.2]{Pos2}} \ 
A {\it positroid cell\/} $\Pi_\M \subset Gr^{\geq 0}(k,n)$ is 
a nonempty intersection of a matroid stratum with the nonnegative Grassmannian:
$$
\Pi_\M := S_\M\cap Gr^{\geq 0}(k,n).
$$
A {\it positroid\/} of {\it rank\/} $k$ is 
a collection $\M\subset {[n]\choose k}$ such that 
$\Pi_\M$ is nonempty.
The {\it positroid stratification\/} of the nonnegative Grassmannian is the disjoint
decomposition of $Gr^{\geq 0}(k,n)$ into the positroid cells:
$$
Gr^{\geq 0}(k,n) = \bigsqcup_{\M\textrm{ is a positroid}} \Pi_\M.
$$
\end{definition}

Clearly, positroids, or positive matroids, are a special kind of matroids. 
The positive Grassmannian $Gr^{>0}(k,n)$ itself is the 
{\it top positroid cell\/} $\Pi_{{[n]\choose k}}$ 
for the uniform matroid $\M= {[n]\choose k}$.

The {\it cyclic shift\/} is the map $\tilde c:Gr(k,n)\to Gr(k,n)$ acting
on elements $[A]=[v_1,\dots,v_n]\in Gr(k,n)$ by
$$
\tilde c:[v_1,\dots,v_n]
\longmapsto
[v_2,v_3,\dots,v_n,(-1)^{k-1}v_1].
$$ 
The shift $\tilde c$ induces the action of the
{\it cyclic group\/} $\Z/n\Z$ on the Grassmannian $Gr(k,n)$,
that preserves its positive part $Gr^{>0}(k,n)$.
Many of the objects associated with the positive Grassmannian
exhibit {\it cyclic symmetry.}  This cyclic symmetry is a crucial
ingredient in the study of the positive Grassmannian.

\begin{theorem}
\label{th:Pishifts} 
\cite[Theorem~3.7]{Pos2}  
The positroid stratification
is the common refinement of $n$ {\it cyclically shifted\/} Schubert
decompositions restricted to $Gr^{\geq 0}(k,n)$.
In other words, each positroid cell\/ $\Pi_\M$ is given by the intersection
of the nonnegative parts of $n$ cyclically shifted Schubert cells:
$$
\Pi_\M = \bigcap_{i=0}^{n-1}  \tilde{c}^{\,i}(\Omega_{I_i} \cap Gr^{\geq 0}(k,n)).
$$
\end{theorem}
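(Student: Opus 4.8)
The plan is to establish the two inclusions of the displayed identity separately, the second being the substantial one. For ``$\subseteq$'', note first that since $\tilde c$ preserves $Gr^{\geq 0}(k,n)$, for any $[A]\in\Pi_\M$ and any $i$ the shifted point $\tilde c^{-i}([A])$ again lies in $Gr^{\geq 0}(k,n)$, and its matroid $\M(\tilde c^{-i}([A]))$ is obtained from $\M$ by relabelling the ground set cyclically; in particular it depends only on $\M$, not on the chosen representative $A$. Setting $I_i:=I_{\min}\big(\M(\tilde c^{-i}([A]))\big)$ — which is precisely the $i$-th element of the \emph{Grassmann necklace} of $\M$ — we get $\tilde c^{-i}([A])\in\Omega_{I_i}\cap Gr^{\geq 0}(k,n)$, hence $[A]\in\tilde c^{\,i}(\Omega_{I_i}\cap Gr^{\geq 0}(k,n))$ for every $i$, which is the inclusion ``$\subseteq$''. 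The same computation run backwards shows that \emph{any} point lying in the right-hand intersection has a positroid with this same necklace $(I_0,\dots,I_{n-1})$.

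In view of that last remark, the reverse inclusion reduces to the assertion that a positroid is determined by its Grassmann necklace, and more precisely to the claim that for every $[A]\in Gr^{\geq 0}(k,n)$ with necklace $(I_0,\dots,I_{n-1})$ one has
$$
\M(A)=\big\{J\in\textstyle\binom{[n]}{k}\ :\ J\succeq_i I_i\ \text{ for all }i\in[n]\big\},
$$
where $\succeq_i$ is the Gale order attached to the linear order $i,i+1,\dots,n,1,\dots,i-1$ on $[n]$. Indeed, granting this, any $[B]$ in the right-hand intersection has $\M(B)$ equal to this same explicit set, so $\M(B)=\M$ and $[B]\in\Pi_\M$. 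The inclusion ``$\subseteq$'' of the displayed identity for $\M(A)$ is immediate, since by construction $I_i$ is the $\succeq_i$-minimum of $\M(A)$, so every base of $\M(A)$ is $\succeq_i$-above $I_i$. The content is the reverse direction: if $J\succeq_i I_i$ for all $i$, then $\Delta_J(A)\neq 0$.

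To prove this I would argue by contradiction, choosing among all $J$ with $\Delta_J(A)=0$ and $J\succeq_i I_i$ for all $i$ one that is extremal, say $\preceq_0$-minimal. Since $I_0\in\M(A)$ we have $J\neq I_0$, so $J$ admits a single-element exchange $J'=(J\setminus\{a\})\cup\{b\}$ with $J'\prec_0 J$ and $J'\succeq_0 I_0$; the heart of the matter is to show such an exchange can be chosen so that $J'$ still satisfies all the necklace inequalities \emph{and} so that a three-term Pl\"ucker relation $\Delta_{Kac}\Delta_{Kbd}=\Delta_{Kab}\Delta_{Kcd}+\Delta_{Kad}\Delta_{Kbc}$ ties $\Delta_J(A)$, $\Delta_{J'}(A)$ and two further maximal minors of $A$ together with $J$ occupying one of the two ``crossing'' slots. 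Because \emph{all} maximal minors of $A$ are $\geq 0$, vanishing of such a crossing minor forces the two nonnegative summands on the right to vanish as well; this propagates $\Delta_J(A)=0$ to $\Delta_{J'}(A)=0$ while moving strictly $\preceq_0$-downward, contradicting the minimal choice of $J$ — and in any case iterating this descent would eventually reach the necklace element $I_0\in\M(A)$ with $\Delta_{I_0}(A)=0$, which is absurd.

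The main obstacle is precisely this last step: organising the exchanges and the choice of three-term relation so that (a) the move is strictly monotone in the chosen order, (b) the partner $J'$ continues to satisfy $J'\succeq_i I_i$ for \emph{all} $i$ — this is exactly where the full cyclic family of Schubert conditions, rather than a single one, is needed — and (c) the sign pattern of total nonnegativity genuinely forces the vanishing to propagate. This step uses positivity in an essential way: by Mn\"ev's universality theorem \cite{Mnev} the analogous statement is false for general matroids, so no purely combinatorial substitute exists. A secondary, bookkeeping obstacle is tracking the signs $(-1)^{k-1}$ introduced by $\tilde c$ when transporting Pl\"ucker coordinates between the standard order and its cyclic shifts; these signs never change which minors vanish, so they are harmless for the matroid statement, but must be handled with care if one wants the shifted matrices to lie literally in $Gr^{\geq 0}(k,n)$.
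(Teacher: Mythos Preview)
The paper does not actually prove this theorem: it is stated with a bare citation to \cite[Theorem~3.7]{Pos2}, and the closely related combinatorial statement you reduce to --- that the positroid equals the intersection of cyclically shifted Schubert matroids --- is likewise only cited, as Theorem~\ref{th:M=Schubert} (Oh \cite{Oh}). So there is no in-paper argument to compare your proposal against; what one can compare is your outline to the arguments in those references.

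Your reduction is correct and well-organised: the easy inclusion is fine, and you correctly isolate the content of the hard inclusion as the identity $\M(A)=\{J:\ J\succeq_i I_i\text{ for all }i\}$, which is precisely Oh's theorem. Where your proposal stops being a proof is exactly where you say it does: the descent step. You assert that one can choose an exchange $J\rightsquigarrow J'$ which is (a) strictly $\prec_0$-decreasing, (b) still satisfies \emph{all} $n$ necklace inequalities, and (c) sits in a three-term Pl\"ucker relation with $J$ in a ``crossing'' slot --- but you do not construct such an exchange, and this is the entire difficulty. Condition~(b) in particular does not come for free from a generic single-element exchange toward $I_0$; one genuinely has to exhibit the right $a,b$ (and the auxiliary indices $c,d$) and check the cyclic Gale inequalities one by one. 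As written, the proposal is a correct identification of what must be proved together with an honest statement that you have not proved it.

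It may also be worth knowing that the route taken in \cite{Pos2} is quite different from the direct Pl\"ucker-relation descent you sketch: there the positroid cells are first parametrised explicitly (via Le-diagrams / boundary measurement maps for planar networks), and the Grassmann-necklace description is read off from that parametrisation; Oh's later argument \cite{Oh} is closer in spirit to yours but still requires a careful case analysis you have not supplied. Either way, the missing ingredient in your write-up is the actual construction of the exchange satisfying (a)--(c), not merely the observation that such an exchange would finish the proof.
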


So the positroid cells require intersecting $n$ cyclically shifted 
Schubert cells, which is a smaller number than $n!$ 
permuted Schubert cells needed for general matroid strata.
In fact, the positroid cells $\Pi_\M$ (unlike matroid strata) are indeed cells.

\begin{theorem} {\it \cite[Theorem~3.5]{Pos2}, \cite[Theorem~5.4]{PSW}} \
The positroid cells $\Pi_\M$ are homeomorphic to open balls.
The cell decomposition of $Gr^{\geq 0}(k,n)$ into the positroid cells
$\Pi_\M$ is a CW-complex.
\end{theorem}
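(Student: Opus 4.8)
\emph{Overview.} I would prove the two assertions separately. First, each positroid cell $\Pi_\M$ is shown to be an open ball by exhibiting an explicit subtraction-free parametrization of it by a positive orthant. Second, the CW property is obtained by extending this parametrization continuously to a closed cube whose boundary is carried into the union of the lower-dimensional positroid cells. For the first part, to a positroid $\M$ I would attach its $\Gamma$-diagram (Le-diagram) $L$: a Young diagram $\lambda_\M\subseteq k\times(n-k)$ encoding $I_{\min}(\M)$ together with a distinguished set of $d:=\dim\Pi_\M$ boxes carrying positive real weights $t_1,\dots,t_d$; equivalently, $L$ encodes a weighted plabic graph $G_L$. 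The boundary-measurement construction turns the weights into a $k\times n$ matrix $A(t)$, and by the Lindstr\"om--Gessel--Viennot lemma each Pl\"ucker coordinate $\Delta_I(A(t))$ is the generating function of flows in $G_L$ with boundary prescribed by $I$, hence a subtraction-free polynomial in $t_1,\dots,t_d$; one checks that such a flow exists precisely when $I\in\M$. Therefore $\Delta_I(A(t))>0$ for $I\in\M$ and $\Delta_I(A(t))=0$ otherwise, so $[A(t)]\in\Pi_\M$. Conversely, every $[A]\in\Pi_\M$ has a canonical row-echelon representative whose entries express the $t_i$ as explicit ratios of (nonvanishing) Pl\"ucker coordinates, yielding a continuous two-sided inverse. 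Thus $\phi_\M\colon\R_{>0}^d\xrightarrow{\sim}\Pi_\M$, and $\R_{>0}^d$ is homeomorphic to an open $d$-ball.

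\emph{Closures are unions of cells.} By Theorem~\ref{th:Pishifts}, $\Pi_\M=Gr^{\ge0}(k,n)\cap\bigcap_{i=0}^{n-1}\tilde c^{\,i}(\Omega_{I_i})$. Since $Gr^{\ge0}(k,n)$ is a closed subset of the compact real Grassmannian, $\overline{\Pi_\M}$ is compact, and one shows that it equals $Gr^{\ge0}(k,n)\cap\bigcap_i\tilde c^{\,i}(\overline{\Omega_{I_i}})$, the intersection of $Gr^{\ge0}(k,n)$ with $n$ cyclically shifted Schubert \emph{varieties}. Each Schubert variety is a union of Schubert cells, and the positroid cells refine the common cyclically shifted Schubert decomposition (Theorem~\ref{th:Pishifts} again); hence $\overline{\Pi_\M}$ is a finite union of positroid cells, so $\partial\Pi_\M:=\overline{\Pi_\M}\setminus\Pi_\M$ is a union of positroid cells of dimension $<d$.

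\emph{Characteristic maps and conclusion.} Precompose $\phi_\M$ with a coordinatewise homeomorphism $(0,1)^d\xrightarrow{\sim}\R_{>0}^d$ to obtain $\psi_\M\colon(0,1)^d\xrightarrow{\sim}\Pi_\M$; the heart of the matter is that $\psi_\M$ extends to a continuous map $\overline\psi_\M\colon[0,1]^d\to\overline{\Pi_\M}$. When a collection of parameters degenerates to $0$ or $\infty$, rescale $A(t)$ by a suitable element of $GL(k)$ so that it stays bounded and of rank $k$; the limit represents the positroid obtained from $G_L$ by the corresponding edge contractions and deletions, hence lies in a cell of $\partial\Pi_\M$, and these limits are consistent along common faces of the cube. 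Then $\overline\psi_\M$ is a valid characteristic map: it is continuous, restricts to a homeomorphism of $(0,1)^d$ onto $\Pi_\M$, has image $\overline{\Pi_\M}$, and sends $\partial[0,1]^d$ into the union of the lower-dimensional cells. Since there are only finitely many positroids, closure-finiteness and the weak-topology axiom hold automatically, so the positroid stratification exhibits $Gr^{\ge0}(k,n)$ as a finite CW complex.

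\emph{Main obstacle.} The crux is the continuous extension in the last step. A naive limit of $A(t)$ can drop rank uncontrollably or leave $Gr^{\ge0}(k,n)$, so one needs the correct $GL(k)$-normalization together with precise combinatorial bookkeeping---using the decorated permutation or Grassmann necklace of $\M$, which records exactly which Pl\"ucker coordinates vanish---to match the $2^d$ faces of $[0,1]^d$ with the positroid cells comprising $\partial\Pi_\M$. Making this matching, and the continuity of the extension, fully rigorous is precisely where \cite{PSW} brings in the geometry of the matching and flow polytopes of $G_L$.
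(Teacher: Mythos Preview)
The paper does not itself prove this theorem; it merely records it with citations to \cite{Pos2} and \cite{PSW}. Your sketch accurately reproduces the strategy of those references: the $\Gamma$-diagram/boundary-measurement parametrization yielding $\R_{>0}^d\simeq\Pi_\M$ is precisely \cite[Theorem~3.5]{Pos2}, and extending it to a characteristic map on the closed cube is the content of \cite[Theorem~5.4]{PSW}. You also correctly flag the continuous extension as the crux and point to the matching/flow-polytope machinery of \cite{PSW} as the device that makes it rigorous.

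Two caveats on your closure paragraph. First, the asserted equality $\overline{\Pi_\M}=Gr^{\ge0}(k,n)\cap\bigcap_i\tilde c^{\,i}(\overline{\Omega_{I_i}})$ is stronger than you need and is itself a theorem (it amounts to identifying the closure order on positroid cells with the combinatorial Bruhat-type order on decorated permutations). Only the easy inclusion $\subseteq$---equivalently, $\overline{\Pi_\M}\subseteq\{[A]\in Gr^{\ge0}(k,n):\Delta_I(A)=0\text{ for }I\notin\M\}$---is required for the CW axiom. Second, concluding that the cells in $\partial\Pi_\M$ have dimension strictly below $d$ uses the nontrivial combinatorial fact that $\M'\subsetneq\M$ for positroids forces $\dim\Pi_{\M'}<\dim\Pi_\M$; this is established in \cite{Pos2} via the explicit dimension formula in terms of the decorated permutation. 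Finally, on the extension step itself: \cite{PSW} does not carry out the direct rescale-and-take-limits argument you outline. Instead they pass to the bipartite plabic graph, realize $\overline{\Pi_\M}$ as an explicit quotient of the projective toric variety associated to its matching polytope, and read off the characteristic map from the moment-map compactification of that toric variety---which is exactly why the polytope geometry enters, as your last paragraph anticipates.
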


\begin{conjecture} \cite[Conjecture~3.6]{Pos2} The positroid stratification of
the nonnegative Grassmannian $Gr^{\geq 0}(k,n)$ is a regular CW-complex.  In
particular, the closure $\overline{\Pi}_\M$ of each positroid cell in $Gr^{\geq
0}(k,n)$ is homeomorphic to a closed ball.  \end{conjecture}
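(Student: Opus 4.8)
The plan is to prove, by induction on $d=\dim\Pi_\M$, the following strengthening: for every positroid $\M$, the closure $\overline{\Pi}_\M$ is homeomorphic to a closed $d$-ball, its topological boundary is exactly $Z_\M:=\bigcup_{\M'}\overline{\Pi}_{\M'}$ (the union over all positroids with $\overline{\Pi}_{\M'}\subsetneq\overline{\Pi}_\M$), and $Z_\M$, with its induced positroid stratification, is a regular CW complex homeomorphic to $S^{d-1}$. Granting this, the conjecture is immediate: after fixing a homeomorphism $D^d\cong\overline{\Pi}_\M$, the characteristic map of $\Pi_\M$ is the composite $D^d\cong\overline{\Pi}_\M\hookrightarrow Gr^{\geq 0}(k,n)$, and it carries $\partial D^d=S^{d-1}$ homeomorphically onto $Z_\M$; since this holds for all cells, the positroid stratification is a regular CW complex. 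The combinatorial input I would use is Williams's shellability theorem: the poset of positroid cells, ordered by containment of closures, is graded, \emph{thin} (every length-two interval has exactly four elements), and EL-shellable. This rests on the cyclic Bruhat-order description of the closure order, which is compatible with the cyclically shifted Schubert decomposition of Theorem~\ref{th:Pishifts}. In particular, the interval below a fixed positroid $\M$ is graded of rank $d$, thin, and shellable.

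Next I would show $Z_\M\cong S^{d-1}$. By the inductive hypothesis every $\overline{\Pi}_{\M'}$ with $\M'<\M$ is a closed ball carrying a regular CW structure whose boundary is the corresponding $Z_{\M'}$, so gluing these along common faces presents $Z_\M$ as a regular CW complex. By gradedness it is pure of dimension $d-1$; by shellability it is strongly connected; and by thinness every $(d-2)$-cell lies in exactly two $(d-1)$-cells. Thus $Z_\M$ is a shellable closed pseudomanifold (as a regular CW complex), and by the Danaraj--Klee criterion it is homeomorphic to $S^{d-1}$.

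The crux is to exhibit $\overline{\Pi}_\M$ as the cone on $Z_\M$. The tool is a \emph{contractive flow}: a continuous map $\phi\colon\overline{\Pi}_\M\times[0,1]\to\overline{\Pi}_\M$ with $\phi(\cdot,0)=\mathrm{id}$ and $\phi(\cdot,1)\equiv x_0$ for a fixed $x_0\in\Pi_\M$, which preserves every closed positroid subcell, is injective for each $t<1$, and strictly contracts toward $x_0$, so that a point driven from $Z_\M$ leaves every neighbourhood of $Z_\M$ and never returns. On $Gr^{\geq 0}(k,n)$ such a flow can be built from suitable one-parameter subgroups acting through the plabic-graph parametrizations of the cells, tuned so that its restriction to each closed positroid subcell is again a contractive flow of the same shape; this is the mechanism by which Galashin--Karp--Lam proved that $Gr^{\geq 0}(k,n)$ itself is a ball. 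Given such a flow, reparametrizing gives a collar $Z_\M\times[0,1)\hookrightarrow\overline{\Pi}_\M$ and, recording for each point the first time at which it is reached from $Z_\M$, a homeomorphism $\overline{\Pi}_\M\cong\bigl(Z_\M\times[0,1]\bigr)/\bigl(Z_\M\times\{1\}\bigr)=\mathrm{Cone}(S^{d-1})\cong D^d$, with boundary $Z_\M$. This completes the induction.

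The main obstacle is the construction in the previous paragraph: producing one flow that is simultaneously contractive on $\overline{\Pi}_\M$ and on every one of its closed positroid subcells, with uniform convergence and injectivity at finite times. The remaining ingredients are either standard combinatorics (thinness and shellability of the cell poset) or formal consequences of the inductive hypothesis. A secondary point needing care is reconciling Williams's poset-theoretic shellability with the notion of shellability of a regular CW complex required by Danaraj--Klee; alternatively, one can bypass the pseudomanifold argument and deduce $\partial\overline{\Pi}_\M\cong S^{d-1}$ directly from the flow together with the inductive fact that all proper closed subcells are balls with their boundaries attached homeomorphically.
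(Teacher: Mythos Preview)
The paper does not prove this statement: it is explicitly recorded as a \emph{conjecture} (Conjecture~3.5 in the present paper, citing \cite[Conjecture~3.6]{Pos2}). The surrounding text notes only that Rietsch--Williams established the homotopy-theoretic analogue and that Galashin--Karp--Lam proved the special case of the top cell (Theorem~3.6, \cite{GKL}). There is therefore no ``paper's own proof'' to compare against.

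Your outline is, in fact, the strategy that was eventually carried out successfully (by Galashin--Karp--Lam in later work, after the paper under review): induct on dimension, use Williams's thinness and shellability of the face poset together with a Bj\"orner/Danaraj--Klee argument to identify the boundary $Z_\M$ with a sphere, and then produce a contractive flow to realize $\overline{\Pi}_\M$ as the cone on $Z_\M$. So the architecture is sound.

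The genuine gap is exactly the one you flag yourself: you do not construct the contractive flow on an arbitrary $\overline{\Pi}_\M$; you only assert that one ``can be built from suitable one-parameter subgroups \dots\ tuned so that its restriction to each closed positroid subcell is again a contractive flow.'' This is the entire content of the problem. The flow used in \cite{GKL} for the top cell does not obviously restrict to a contractive flow on every closed positroid subcell, and finding flows with the required uniform injectivity, subcell-invariance, and strict contraction toward an interior point of $\Pi_\M$ is precisely what was open when the paper was written. Your sentence ``this is the mechanism by which Galashin--Karp--Lam proved that $Gr^{\geq 0}(k,n)$ itself is a ball'' is correct for the top cell but does not, without substantial further argument, yield the cell-by-cell statement you need for the induction. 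Until that flow is actually produced and its properties verified, the proposal remains a plan rather than a proof.

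A smaller point: your passage from poset shellability (Williams) to CW-shellability in the Danaraj--Klee/Bj\"orner sense is not automatic; you need that $Z_\M$ is already known (inductively) to be a \emph{regular} CW complex whose face poset is the given shellable, thin interval, and then invoke Bj\"orner's criterion that a thin shellable regular CW complex is a PL-sphere. You gesture at this, but it should be made explicit in the inductive hypothesis.
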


This conjecture was motivated by a similar conjecture of Fomin and Zelevinsky on
double Bruhat cells~\cite{FZ1}.
Up to homotopy-equivalence this conjecture was proved 
by Rietsch and Williams \cite{RW}.  A major step towards this conjecture 
was recently achieved by Galashin, Karp, and Lam,
who proved it for the top cell.

\begin{theorem} {\cite[Theorem~1.1]{GKL}} \   
The nonnegative Grassmannian $Gr^{\geq 0}(k,n)$ is homeomorphic to a 
closed ball of dimension $k(n-k)$.
\end{theorem}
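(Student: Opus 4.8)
The plan is to build a \emph{contractive flow} on $Gr^{\geq 0}(k,n)$ out of classical total positivity and then feed it into a topological criterion for recognizing closed balls. Fix the symmetric tridiagonal $n\times n$ matrix $R$ with entries equal to $1$ immediately above and below the diagonal and $0$ elsewhere, and put $g(t)=\exp(tR)$; this is a one-parameter subgroup of $GL(n,\R)$, and a classical fact about transition matrices of birth--death processes (going back to Gantmacher--Krein \cite{GK35}) is that every minor of $g(t)$ is strictly positive for $t>0$. Define $\phi_t([A]):=[A\,g(t)]$, a flow on $Gr(k,n)$. By the Cauchy--Binet formula, if $[A]\in Gr^{\geq 0}(k,n)$ then for $t>0$ each maximal minor $\Delta_I(A\,g(t))$ is a sum of terms $\Delta_J(A)\cdot(\text{a positive minor of }g(t))$, all nonnegative with at least one strictly positive, so $\phi_t$ maps $Gr^{\geq 0}(k,n)$ into the top cell $Gr^{>0}(k,n)$. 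Passing to the induced flow $\exp(t\wedge^k R)$ on $\wedge^k\R^n$ and using that its largest eigenvalue $\lambda_1+\cdots+\lambda_k$ is simple, with eigenvector the ``Perron flag'' $v_1\wedge\cdots\wedge v_k$ whose Pl\"ucker coordinates are all positive (again \cite{GK35}), one finds that $\phi_t([A])$ converges as $t\to\infty$ to the single point $x_0:=[v_1,\dots,v_k]\in Gr^{>0}(k,n)$, independently of $[A]$. Hence $\phi_t(Gr^{\geq 0}(k,n))\subseteq Gr^{\geq 0}(k,n)$ for $t\geq 0$ and $\bigcap_{t\geq 0}\phi_t(Gr^{\geq 0}(k,n))=\{x_0\}$.

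Next I would use the flow to display $Gr^{\geq 0}(k,n)$ as a cone. Write $\partial:=Gr^{\geq 0}(k,n)\setminus Gr^{>0}(k,n)$, the locus where some Pl\"ucker coordinate vanishes; by the preceding theorem $Gr^{>0}(k,n)$ is a cell, and being a nonempty open subset of $Gr(k,n,\R)$ it is homeomorphic to $\R^{k(n-k)}$, so $\partial$ is compact and $Gr^{>0}(k,n)$ is the manifold interior. The points to verify are: (a) for every $[A]\in Gr^{>0}(k,n)$ the backward orbit $\phi_{-t}([A])$ leaves $Gr^{\geq 0}(k,n)$ in finite time and first returns to $\partial$ at a single point --- here one uses that, after a diagonal sign conjugation, $-R$ is again oscillatory with dominant eigenvector $v_n$ of full sign variation, so $A\,g(-t)$ is not totally nonnegative for large $t$, together with monotonicity of a suitable ``negativity'' statistic along the orbit; and (b) consequently the map $[0,1)\times\partial\to Gr^{\geq 0}(k,n)\setminus\{x_0\}$ given by $(s,y)\mapsto\phi_{f(s)}(y)$, for a fixed increasing homeomorphism $f\colon[0,1)\to[0,\infty)$, is a homeomorphism that extends continuously over $\{1\}\times\partial$ by collapsing it to $x_0$. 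By compactness this identifies $Gr^{\geq 0}(k,n)$ with the cone on $\partial$. Since the cone on $Y$ is a closed $d$-ball once $Y$ is homeomorphic to $S^{d-1}$, and $d=k(n-k)$, the theorem is reduced to proving that $\partial$ is a sphere of dimension $k(n-k)-1$.

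This reduction carries the weight of the proof. First I would establish that $Gr^{\geq 0}(k,n)$ is a \emph{topological manifold with boundary}, by induction, via the local structure of the positroid stratification: near an interior point of a positroid cell $\Pi_\M$ the nonnegative Grassmannian is homeomorphic to $\R^{\dim\Pi_\M}$ times a transverse slice, and this slice is (isomorphic to a product of) nonnegative Grassmannians of strictly smaller dimension, hence a closed ball by the inductive hypothesis; so every point of $Gr^{>0}(k,n)$ has a Euclidean neighborhood and every point of $\partial$ has a half-space neighborhood, with $\partial$ the manifold boundary. Granting this, $Gr^{\geq 0}(k,n)$ is a compact contractible $d$-manifold with boundary ($d=k(n-k)$), so: by Lefschetz duality $\partial$ has the homology of $S^{d-1}$; for $d\geq 3$, removing the interior point $x_0$ deformation retracts $Gr^{\geq 0}(k,n)$ onto $\partial$, and deleting a point from a simply connected $d$-manifold leaves a simply connected space, so $\partial$ is simply connected; by Hurewicz and the generalized Poincar\'e conjecture $\partial\cong S^{d-1}$ (with $d\leq 2$ trivial), and therefore $Gr^{\geq 0}(k,n)$, being the cone on a $(d-1)$-sphere, is a closed ball of dimension $d=k(n-k)$. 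The crux of the whole argument is the inductive step establishing the transverse-slice description and the manifold-with-boundary property; the construction of the flow and the passage from ``$\partial$ is a sphere'' to ``$Gr^{\geq 0}(k,n)$ is a ball'' are, by comparison, routine. This is the content of \cite{GKL}.
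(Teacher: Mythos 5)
Your first half --- the totally positive semigroup $g(t)=\exp(tR)$, Cauchy--Binet to get $\phi_t(Gr^{\geq 0}(k,n))\subseteq Gr^{>0}(k,n)$ for $t>0$, and Perron--Frobenius on $\wedge^k\R^n$ to contract everything to a single totally positive fixed point $x_0$ --- is exactly the mechanism of the cited Galashin--Karp--Lam proof (they use the cyclic shift matrix rather than your tridiagonal $R$, but the principle is the same), and the cone description of $Gr^{\geq 0}(k,n)$ over $\partial:=Gr^{\geq 0}(k,n)\setminus Gr^{>0}(k,n)$ is sound. One repairable detail: to show every backward orbit in $Gr^{>0}(k,n)\setminus\{x_0\}$ exits $Gr^{\geq 0}(k,n)$ in finite time, you should use that the Rayleigh quotient of $\wedge^k R$ is a strict Lyapunov function, so $x_0$ is the only flow-fixed point in $Gr^{\geq 0}(k,n)$; the component of $\wedge^k A$ along the dominant eigenvector of $-\wedge^k R$ can vanish for special totally positive $A$, so your sign-conjugation argument alone does not cover all cases.

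The genuine gap is in the final reduction. You deduce ``$\partial\cong S^{k(n-k)-1}$'' from the assertion that $Gr^{\geq 0}(k,n)$ is a topological manifold with boundary, justified by an induction in which the transverse slice to a positroid cell is ``a product of nonnegative Grassmannians of smaller dimension.'' No such product decomposition of the slices is established (or true in that form); the local structure of $Gr^{\geq 0}(k,n)$ along lower positroid cells is precisely the content of the regularity conjecture (Conjecture 3.6 of [Pos06], quoted in this paper), which was open at the time of [GKL] and was resolved only later by substantially harder methods. Without the manifold-with-boundary property you cannot invoke Lefschetz duality, Hurewicz, or the generalized Poincar\'e conjecture, so your argument does not close --- and in effect it assumes a statement at least as deep as the theorem. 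The point of the actual proof is to avoid all of this: one runs the contractive flow on an ambient open set $U\subseteq Gr(k,n)$ homeomorphic to $\R^{k(n-k)}$ containing $Gr^{\geq 0}(k,n)$ and stable under the backward flow (for instance $U=\bigcup_{t>0}\phi_{-t}(Gr^{>0}(k,n))$, an increasing union of open cells), in which $Gr^{>0}(k,n)$ is the interior of $Gr^{\geq 0}(k,n)$. The quantitative contractivity of the flow identifies the space of trajectories in $U\setminus\{x_0\}$ with a sphere $S^{k(n-k)-1}$, and each trajectory meets the topological boundary $\partial$ exactly once (outside $Gr^{\geq 0}$ before, inside $Gr^{>0}$ after); following trajectories is then a continuous bijection from a compact sphere onto $\partial$, hence a homeomorphism. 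This yields $\partial\cong S^{k(n-k)-1}$ directly, with no manifold structure on $\partial$, no duality, and no Poincar\'e conjecture. You should replace your third step by this ambient-flow argument.
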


By Theorem~\ref{th:Pishifts}, 
positroids $\M$ and
positroid cells $\Pi_\M\subset Gr^{\geq 0}(k,n)$
correspond to certain sequences
$(I_0,I_1,\dots,I_{n-1})$.
Let us describe this bijection explicitly.

\begin{definition}
\cite[Definition~16.1]{Pos2}
A {\it Grassmann necklace\/} $\J=(J_1,J_2,\dots,J_{n})$ 
of type $(k,n)$ is a sequence 
of elements $J_i\in {[n]\choose k}$ such that, for any $i\in [n]$,
either $J_{i+1} = (J_i\setminus \{i\})\cup \{j\}$ or $J_{i+1}=J_i$,
where the indices $i$ are taken ${\pmod n}$.
\end{definition}

The {\it cyclic permutation\/} $c\in S_n$ is given by $c:i\mapsto i+1\pmod n$.
The action of the symmetric group $S_n$ on $[n]$ induces the $S_n$-action 
on ${[n]\choose k}$ and on subsets of ${[n]\choose k}$.
Recall that $I_{\min}(\M)$ is the minimal element of a matroid $\M$
in the Gale order. 
For a matroid $\M$, let
$$
\begin{array}{l}
\J(\M):=(J_1,\dots,J_n), \textrm{ where }\\[.1in]
J_{i+1} = c^{i}(I_{\min}(c^{-i}(\M))), \textrm{ for } i=0,\dots,n-1.
\end{array}
$$

\begin{theorem}
\cite[Theorem~17.1]{Pos2}
The map $\M\mapsto \J(\M)$ is a bijection between
positroids $\M$ of rank $k$ on the ground set $[n]$
and Grassmann necklaces of type $(k,n)$.
\end{theorem}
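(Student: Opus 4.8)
The plan is to establish three things: that $\J(\M)$ really is a Grassmann necklace (so the map is well defined), that the map is injective on positroids, and that every Grassmann necklace of type $(k,n)$ is hit. It helps to first rewrite the map. For a linear order $\prec$ on $[n]$, let $I_{\min}^{\prec}(\M)$ denote the $\prec$-minimal basis of $\M$ in the induced Gale order, computed greedily: scan $[n]$ in increasing $\prec$-order and keep each element that still extends the current independent set. Unwinding $J_{i+1}=c^{i}(I_{\min}(c^{-i}(\M)))$ shows $J_{i+1}=I_{\min}^{<_i}(\M)$, where $<_i$ is the cyclic rotation $i{+}1<i{+}2<\cdots<n<1<\cdots<i$ of the standard order (and $<_0$ is the standard order). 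Thus $\J(\M)$ is simply the list of minimal bases of $\M$ with respect to all $n$ cyclic rotations of $1<2<\cdots<n$.

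\emph{Well-definedness and injectivity.} Passing from $J_i=I_{\min}^{<_{i-1}}(\M)$ to $J_{i+1}=I_{\min}^{<_i}(\M)$ means taking the order $<_{i-1}$, whose least element is $i$, and moving $i$ to the last position. So the necklace property follows from an elementary matroid lemma: if $\prec'$ is obtained from $\prec$ by moving the $\prec$-least element $m$ to the end, then either $I_{\min}^{\prec'}(\M)=I_{\min}^{\prec}(\M)$ (precisely when $m$ is a loop or a coloop), or $I_{\min}^{\prec'}(\M)=(I_{\min}^{\prec}(\M)\setminus\{m\})\cup\{j\}$ for a single element $j$. I would prove this by running the greedy algorithm for both orders: if $m$ is not a loop, then $m\in I_{\min}^{\prec}(\M)$ and $I_{\min}^{\prec}(\M)=\{m\}\cup T'$ where $T'$ is the greedy-minimal basis of the contraction $\M/m$, while $I_{\min}^{\prec'}(\M)$ is the greedy-minimal basis $T$ of the deletion $\M\setminus m$; a short induction on the greedy steps (using that each greedy prefix spans the portion of $[n]$ scanned so far, together with submodularity of the rank function) gives $T'\subseteq T$, so $T=T'\cup\{j\}$ unless $m$ is a coloop, in which case $T=T'$. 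This is exactly the alternative in the definition of a Grassmann necklace. For injectivity, Theorem~\ref{th:Pishifts} writes $\Pi_\M=\bigcap_{i=0}^{n-1}\tilde c^{\,i}(\Omega_{I_i}\cap Gr^{\geq 0}(k,n))$, and since $\tilde c$ sends the matroid of $[A]$ to $c^{-1}(\M(A))$ one must have $I_i=c^{-i}(J_{i+1})$; hence $\J(\M)$ determines the cell $\Pi_\M$, and as the positroid cells are nonempty and pairwise disjoint, distinct positroids yield distinct necklaces. (Alternatively, one can recover $\M$ from $\J(\M)$ combinatorially by showing its bases are exactly those $I$ with $I\succeq_{i}J_{i+1}$ for all $i$, in the Gale order attached to the rotation $<_i$.)

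\emph{Surjectivity, and the main obstacle.} Given an arbitrary Grassmann necklace $\J=(J_1,\dots,J_n)$ of type $(k,n)$, the candidate is $\M_\J:=\{I\in{[n]\choose k}\mid I\succeq_{i}J_{i+1}\text{ for all }i=0,\dots,n-1\}$, and I must show it is a positroid with $\J(\M_\J)=\J$. The cleanest approach is constructive: encode $\J$ as its decorated permutation, pass to the associated Le-diagram, and use the Le-diagram to write down an explicit $k\times n$ matrix $A$ with all maximal minors $\geq 0$ (the standard parametrization of a positroid cell of $Gr^{\geq 0}(k,n)$); then a direct greedy computation on $A$ shows $I_{\min}^{<_i}(\M(A))=J_{i+1}$ for every $i$, so $\M(A)=\M_\J$ and $\J(\M_\J)=\J$. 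This last step is where essentially all the work lies: a priori the collection $\M_\J$ cut out by the cyclic Gale inequalities need not even satisfy the basis exchange axiom, let alone be realizable by a totally nonnegative matrix, so the argument depends on the Le-diagram (equivalently plabic-graph) construction together with careful bookkeeping of which Pl\"ucker coordinates of the parametrizing matrix vanish. By contrast, well-definedness and injectivity are routine once the greedy picture is in place.
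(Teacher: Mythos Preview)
The paper does not supply its own proof of this statement: it is quoted as \cite[Theorem~17.1]{Pos2} and used as a black box, with the surrounding results (Theorems~\ref{th:Pishifts} and~\ref{th:M=Schubert}) likewise imported from \cite{Pos2} and \cite{Oh}. So there is no ``paper's proof'' to compare against beyond the original source, and your outline is in fact the standard route taken there: well-definedness via the greedy/lex-min description of $J_i$, injectivity because the necklace pins down the intersection of cyclically shifted Schubert cells (Theorem~\ref{th:Pishifts}), and surjectivity by manufacturing a totally nonnegative matrix from the Le-diagram (equivalently, a plabic graph) attached to the decorated permutation of $\J$.

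Your argument is sound. Two small comments. First, the greedy lemma (that moving the least element $m$ to the end changes the lex-min basis in at most one element) is correct, but your sketch ``$T'\subseteq T$ by induction on greedy steps'' is a little loose; the clean one-line proof is via closures: for $A=\{y: y\prec x,\ y\neq m\}$, if $x\in\mathrm{cl}(A)$ then $x\in\mathrm{cl}(A\cup\{m\})$, so $x\notin T$ implies $x\notin T'$. Second, your injectivity paragraph leans on Theorem~\ref{th:Pishifts}, which in \cite{Pos2} is developed alongside the present bijection rather than prior to it; if you want a self-contained order of dependence you should instead prove directly that a positroid is recovered from its necklace by the cyclic Gale inequalities (your parenthetical alternative), which is exactly Oh's Theorem~\ref{th:M=Schubert}. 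You correctly flag surjectivity as the substantive step: one really does need the Le-diagram or plabic-graph parametrization to produce a point of $Gr^{\geq 0}(k,n)$ with prescribed necklace, and there is no shortcut that stays purely at the level of set systems.
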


The sequence $(I_0,I_1,\dots,I_{n-1})$ associated with 
$\M$ as in Theorem~\ref{th:Pishifts} is related to the Grassmann necklace
$(J_1,\dots,J_n)$ of $\M$ by $I_i=c^{-i}(J_{i+1})$, for $i=0,\dots,n-1$.

The following result shows how to reconstruct a positroid 
$\M$ from its Grassmann necklace, cf.~Theorem~\ref{th:Pishifts}.
For $I\in {[n]\choose k}$, the {\it Schubert matroid\/} is
$\mathcal{M}_{I}:=\{J\in{[n]\choose k}\mid I\preceq J\}$, where ``$\preceq$'' is the Gale order.

\begin{theorem}
\label{th:M=Schubert}
\cite[Theorem~6]{Oh}
For a Grassmann necklace $\J=(J_1,\dots,J_n)$, 
the associated positroid $\M(\J)=\M$ is given by
$$
\M=\bigcap_{i=0}^{n-1} c^i (\mathcal{M}_{I_i}),
$$
where $I_i = c^{-i}(J_{i+1})$.
\end{theorem}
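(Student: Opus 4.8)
The plan is to combine Theorem~\ref{th:Pishifts} with the description of nonnegative Schubert cells in terms of the Gale order. The key observation is that for the positive Grassmannian, the nonnegative part of a Schubert cell admits a purely matroid-theoretic characterization: if $[A]\in\Omega_I\cap Gr^{\geq 0}(k,n)$, then the matroid $\M(A)$ consists exactly of those $J$ with $I\preceq J$, together possibly with extra sets forced by the nonnegativity constraints, so that in any case $\M(A)\subseteq\mathcal{M}_I$. In fact one should prove the cleaner statement that $[A]\in Gr^{\geq 0}(k,n)$ and $I_{\min}(\M(A))=I$ together imply $\M(A)\subseteq\mathcal M_I$, which amounts to saying that a nonnegative matrix whose Gale-minimal basis is $I$ cannot have $J$ as a basis when $J\not\succeq I$. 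This is a standard fact about totally nonnegative matrices (essentially the statement that in a totally nonnegative matrix the pattern of nonvanishing minors is ``upper-closed'' in the Gale order once you fix the minimal support), and it is really the engine of the whole theorem.

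First I would establish the inclusion $\M(\J)\supseteq\M$, where $\M$ is the positroid with necklace $\J=\J(\M)$: by Theorem~\ref{th:Pishifts}, $\Pi_\M=\bigcap_i \tilde c^{\,i}(\Omega_{I_i}\cap Gr^{\geq 0}(k,n))$ with $I_i=c^{-i}(J_{i+1})$, so for $[A]\in\Pi_\M$ and each $i$ the cyclically shifted matrix lies in the nonnegative part of the Schubert cell $\Omega_{I_i}$; applying the fact above (and undoing the cyclic shift, which only affects signs of minors, not their vanishing) gives $\M=\M(A)\subseteq c^i(\mathcal M_{I_i})$ for every $i$, hence $\M\subseteq\bigcap_i c^i(\mathcal M_{I_i})=\M(\J)$. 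Second, for the reverse inclusion $\M(\J)\subseteq\M$, I would argue that $\M(\J)$ is itself realizable by a nonnegative matrix with necklace $\J$ — equivalently, that $\bigcap_i c^i(\mathcal M_{I_i})$ has Gale-minimal element $J_1$ and, more generally, that its $i$-th cyclically-shifted Gale-minimal element is $J_{i+1}$, so that its Grassmann necklace is exactly $\J$. Since by the bijection $\M\mapsto\J(\M)$ (Theorem~17.1 of \cite{Pos2}, quoted above) a positroid is determined by its necklace, and since $\M(\J)\supseteq\M$ forces the necklace of $\M(\J)$ to coincide with that of $\M$ on the nose once we know $\M(\J)$ is a positroid, we get $\M(\J)=\M$. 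The only genuinely delicate point here is confirming that $\bigcap_i c^i(\mathcal M_{I_i})$ is a positroid at all, i.e.\ nonempty as a realization space; but this is exactly what Theorem~\ref{th:Pishifts} guarantees for $\M$, and the containment $\M\subseteq\bigcap_i c^i(\mathcal M_{I_i})$ shows the intersection is nonempty, while the necklace computation shows it cannot be strictly bigger than $\M$.

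I expect the main obstacle to be the bookkeeping of cyclic shifts and the Gale order: the Schubert matroid $\mathcal M_I$ is defined with respect to the standard linear order on $[n]$, but each factor in the intersection is measured with respect to a cyclically rotated order, and one must check carefully that ``nonnegative with prescribed minimal basis'' translates correctly through each rotation (in particular that the sign twist $(-1)^{k-1}$ in $\tilde c$ is harmless because we only ever care about nonvanishing, not signs). The cleanest route is probably to prove directly the lemma that for $[A]\in Gr^{\geq 0}(k,n)$ one has $\M(A)=\bigcap_{i=0}^{n-1}c^i\bigl(\mathcal M_{I_{\min}(c^{-i}(\M(A)))}\bigr)$, which is a self-contained statement about nonnegative matrices, and then simply feed in the definition of the Grassmann necklace. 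The totally-nonnegative input — that the Gale-upper-closure $\mathcal M_I$ contains $\M(A)$ whenever $I=I_{\min}(\M(A))$ and $A$ is nonnegative — can be cited from \cite{Pos2} (it is implicit in the analysis of Schubert cells there) or proved by a short exchange-argument on minors; either way it, rather than anything in the present statement, is where the real content sits.
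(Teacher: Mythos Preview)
The paper does not supply its own proof of this statement; it simply quotes it as a result of Oh \cite{Oh}. So there is nothing in the paper to compare against, and your proposal has to stand on its own.

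Your easy inclusion $\M\subseteq\bigcap_i c^i(\mathcal M_{I_i})$ is correct, but you have misdiagnosed where it comes from. The containment $\M\subseteq\mathcal M_{I_{\min}(\M)}$ holds for \emph{every} matroid $\M$: the greedy basis is the unique minimum of the Gale order on the set of bases, so every basis $J$ satisfies $I_{\min}(\M)\preceq J$. No total-nonnegativity input is needed, and Theorem~\ref{th:Pishifts} plays no role in this direction. (Your opening sentence, that $\M(A)$ ``consists exactly of those $J$ with $I\preceq J$'', is simply false; you then back off to the correct containment, but the detour through ``a standard fact about totally nonnegative matrices'' is unnecessary.)

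The hard inclusion $\M':=\bigcap_i c^i(\mathcal M_{I_i})\subseteq\M$ has a genuine gap, and it is precisely the one you flag but do not close. Your plan is: show $\M'$ has Grassmann necklace $\J$, then invoke the positroid--necklace bijection to force $\M'=\M$. The necklace computation for $\M'$ is fine (each $J_{i+1}$ lies in $\M\subseteq\M'$ and is Gale-minimal in the $i$-shifted order by construction). But the bijection of \cite[Theorem~17.1]{Pos2} only says that distinct \emph{positroids} have distinct necklaces; it says nothing about arbitrary subsets of $\binom{[n]}{k}$, and there are in general many matroids (and non-matroids) sharing a given necklace. You therefore need $\M'$ to be a positroid, and your justification --- ``nonempty as a realization space'' because $\M\subseteq\M'$, plus ``the necklace computation shows it cannot be strictly bigger'' --- is circular. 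Nonemptiness of $\M'$ as a set of $k$-subsets is not the same as exhibiting a nonnegative matrix $A$ with $\M(A)=\M'$; and the claim that a set with necklace $\J$ cannot strictly contain $\M$ is exactly the assertion that the positroid is the \emph{maximal} object with that necklace, which is the theorem you are trying to prove. This maximality is where Oh's argument does the real combinatorial work, and your outline does not supply a substitute for it.
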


Let us describe positroids in the language of convex geometry.
The {\it hypersimplex\/} 
$$
\Delta_{kn} :=\conv\left\{e_I\mid I\in{[n]\choose k}\right\}
$$
is the convex hull of the $n\choose k$ points 
$e_I = \sum_{i\in I} e_i$, for all $I\in{[n]\choose k}$.
Here $e_1,\dots, e_n$ is the standard basis in $\R^n$.
For a subset $\M\subset{[n]\choose k}$, let $P_\M:=\conv\{e_I\mid I\in \M\}$
be the convex hull of vertices of $\Delta_{kn}$ associated with elements of $\M$.

By \cite{GGMS}, $\M$ is a matroid if and only if every edge of the 
polytope $P_\M$ has the form $[e_I,e_J]$, for $I,J\in{[n]\choose k}$ with 
$|I\cap J| = k-1$.  Here is an analogous description of positroids, 
which is not hard to derive from Theorem~\ref{th:M=Schubert}.

\begin{theorem} \cite{LP}
A nonempty subset $\M\subset{[n]\choose k}$ is a positroid if and only if
\begin{enumerate}
\item Every edge of $P_\M$ has the form $[e_I,e_J]$, 
for $I,J\in{[n]\choose k}$ with $|I\cap J|=k-1$.
\item Every facet of $P_\M$ is given by 
$x_i+x_{i+1}+\cdots + x_j = a_{ij}$ for some cyclic interval 
$\{i,i+1,\dots,j\}\subset [n]$ and $a_{ij}\in\Z$.
\end{enumerate}
\end{theorem}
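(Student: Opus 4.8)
The plan is to derive both conditions from Theorem~\ref{th:M=Schubert}, which expresses a positroid as $\M=\bigcap_{i=0}^{n-1} c^i(\mathcal{M}_{I_i})$, an intersection of cyclic shifts of Schubert matroids. The strategy is: first understand the facet structure of a single Schubert matroid polytope $P_{\mathcal{M}_I}$, then use the fact that $P_\M$ is an intersection of translates/rotations of these, and finally check the edge condition separately (it is essentially automatic since positroids are matroids, but one should verify it is preserved under the relevant operations). I would also need the converse direction: that any $\M$ satisfying (1) and (2) actually arises this way.

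First I would analyze $P_{\mathcal{M}_I}$ where $\mathcal{M}_I=\{J\mid I\preceq J\}$ in the Gale order. The key observation is that the Gale order condition $I\preceq J$, i.e. $i_r\le j_r$ for all $r$, translates into a system of inequalities of the form $x_1+\cdots+x_j \le (\text{number of elements of } I \text{ in } \{1,\dots,j\})$ for each $j\in[n]$, together with the hypersimplex equation $x_1+\cdots+x_n=k$ and $0\le x_i\le 1$. Concretely, $J\in\mathcal{M}_I$ iff $|J\cap\{1,\dots,j\}|\le |I\cap\{1,\dots,j\}|$ for all $j$ — this is the standard reformulation of the Gale order. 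Hence $P_{\mathcal{M}_I}$ is cut out inside $\Delta_{kn}$ by the inequalities $x_1+\cdots+x_j\le a_j$ with $a_j=|I\cap[j]|$. Applying $c^i$ rotates indices, so $c^i(\mathcal{M}_{I_i})$ contributes inequalities of the form $x_{i+1}+x_{i+2}+\cdots+x_{i+j}\le a$ — sums over cyclic intervals. Intersecting over all $i$, every facet-defining inequality of $P_\M$ is of this cyclic-interval type; taking the equality case gives condition (2) (with the understanding that complementary cyclic intervals give the same facet up to the global relation $\sum x_i=k$, so "$x_i+\cdots+x_j=a_{ij}$" covers all cases, and $a_{ij}\in\Z$ since the $a_j$ are integers).

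Next, condition (1): since $\M$ is a matroid (positroids are matroids, as noted after the definition of positroid cells), the Gelfand–Goresky–MacPherson–Serganova characterization quoted just above gives that every edge of $P_\M$ is of the form $[e_I,e_J]$ with $|I\cap J|=k-1$. So (1) is immediate for the forward direction. For the converse, I would argue that conditions (1) and (2) together force $\M$ to be a matroid (by the GGMS edge criterion, (1) alone does this) whose facets are cyclic-interval facets; then I would reconstruct a Grassmann necklace from the facet data — reading off, for each starting point $i$, the Gale-minimal element of the cyclically-relabeled matroid — and invoke Theorem~\ref{th:M=Schubert} (equivalently Oh's theorem and Theorem~17.1 of \cite{Pos2}) to conclude $\M$ is exactly the positroid with that necklace. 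The point is that a matroid polytope is determined by its facets, and cyclic-interval facets are precisely the ones produced by Schubert-matroid intersections.

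The main obstacle I anticipate is the converse direction — showing that conditions (1) and (2) are sufficient. The forward direction is a fairly direct unwinding of Theorem~\ref{th:M=Schubert} plus the GGMS edge lemma. For sufficiency, the delicate point is that from "all facets are cyclic intervals" one must actually produce the Grassmann necklace and verify that the intersection of Schubert matroids it defines recovers $\M$ and nothing larger; this requires knowing that a matroid polytope with only cyclic-interval facets has no "hidden" non-positroid behavior, i.e. that its defining inequalities and the matroid structure are compatible in the way Theorem~\ref{th:M=Schubert} predicts. I would handle this by checking that the candidate necklace $(J_1,\dots,J_n)$ built from the Gale-minimal elements of $c^{-i}(\M)$ is genuinely a Grassmann necklace (the $J_{i+1}$ differ from $J_i$ in the prescribed way — this uses that consecutive cyclic-interval facets are "linked"), and then applying the already-established bijection between necklaces and positroids. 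Care is also needed with the minor bookkeeping around complementary cyclic intervals and the sign/parity conventions in the cyclic shift, but these are routine.
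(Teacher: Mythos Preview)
The paper does not actually prove this theorem; it cites \cite{LP} and only remarks that it ``is not hard to derive from Theorem~\ref{th:M=Schubert}.'' Your plan is precisely along those lines, so at the level of strategy you are doing what the paper suggests.

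There is, however, one genuine gap in your forward direction. From Oh's theorem you get $\M=\bigcap_{i} c^{i}(\mathcal{M}_{I_i})$ as \emph{sets}, and you then assert that every facet of $P_\M$ is a cyclic-interval facet because the intersection of the cyclically shifted Schubert polytopes is cut out by such inequalities. But $P_{\M_1\cap\M_2}\subseteq P_{\M_1}\cap P_{\M_2}$ is in general a strict inclusion: the convex hull of an intersection need not equal the intersection of convex hulls, and a proper subpolytope can acquire entirely new facets. So you must justify the equality
\[
P_\M \;=\; \bigcap_{i=0}^{n-1} P_{\,c^{i}(\mathcal{M}_{I_i})}\,,
\]
not merely the containment. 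The clean fix is an integrality argument: the right-hand side is cut out from the hyperplane $\{\sum x_i=k\}$ by inequalities $\sum_{j\in C} x_j\le a_C$ over cyclic intervals $C$; using the relation $\sum x_i=k$ each cyclic-interval constraint can be rewritten as an ordinary (non-wrapping) interval constraint, and interval matrices are totally unimodular. Hence the intersection polytope is integral, its vertices are $0/1$ vectors $e_J$, and $e_J$ lies in it iff $J\in\bigcap_i c^i(\mathcal{M}_{I_i})=\M$. This closes the gap and simultaneously supplies the main tool for your converse: the same unimodularity argument shows that any polytope in $\Delta_{kn}$ with only cyclic-interval facets is the matroid polytope of the set of lattice points it contains, after which your Grassmann-necklace reconstruction via Theorem~\ref{th:M=Schubert} goes through.
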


Many of the results on the positive Grassmannian are based on an explicit
birational parametrization \cite{Pos2} 
of the positroid cells $\Pi_\M$ in terms of plabic graphs.  
In the next section we describe a more general class
of Grassmannian graphs that includes plabic graphs.

\section{Grassmannian graphs}

\begin{definition} 
A {\it Grassmannian graph\/} is a finite graph $G=(V,E)$,
with vertex set $V$ and edge set $E$, embedded into a disk 
(and considered up to homeomorphism) with $n$ 
{\it boundary vertices\/} $b_1,\dots,b_n\in V$ 
of degree 1 on the boundary of the disk (in the clockwise order), 
and possibly some {\it internal vertices\/} $v$ in the interior of the disk
equipped with integer parameters $h(v)\in\{0,1,\dots,\deg(v)\}$,
called {\it helicities\/} of vertices. 
Here $\deg(v)$ is the degree of vertex $v$.
We say that an internal vertex $v$ is of {\it type\/}
$(h,d)$ if $d=\deg(v)$ and $h=h(v)$.

The set of internal vertices of $G$ is denoted by
$V_\inte=V\setminus\{b_1,\dots,b_n\}$, and the set of {\it internal edges,}
i.e., the edges which are not adjacent to the boundary vertices, 
is denoted by $E_\inte\subset E$.
The {\it internal subgraph\/} is $G_\inte=(V_\inte,E_\inte)$.
\smallskip

A {\it perfect orientation\/} of a Grassmannian graph $G$ is a choice 
of directions for all edges $e\in E$ of the graph $G$ such that,
for each internal vertex $v\in V_\inte$ with helicity $h(v)$, exactly 
$h(v)$ of the edges adjacent to $v$ are directed towards $v$ and 
the remaining $\deg(v)-h(v)$ of adjacent edges are directed away from $v$.
A Grassmannian graph is called {\it perfectly orientable\/}
if it has a perfect orientation.

\smallskip

The {\it helicity\/} of a Grassmannian graph $G$ 
with $n$ boundary vertices is the number $h(G)$ 
given by 
$$
h(G) - n/2 = \sum_{v\in V_\inte} (h(v)  - \deg(v)/2). 
$$
\end{definition}

For a perfect orientation $\O$ of $G$, let 
$I(\O)$ be the set of indices $i\in[n]$ such that the boundary edge adjacent 
to $b_i$ is directed towards the interior of $G$
in the orientation $\O$.

\begin{lemma}
For a perfectly orientable Grassmannian graph $G$ and any 
perfect orientation $\O$ of $G$, we have $|I(\O)| = h(G)$. 
In particular, in this case, $h(G)\in\{0,1,\dots,n\}$.
\end{lemma}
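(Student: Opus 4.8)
The plan is to compute the quantity $|I(\O)|$ via a counting argument on directed edges, and show it is forced to equal $h(G)$ regardless of the choice of perfect orientation $\O$. First I would set up the bookkeeping: for a perfect orientation $\O$, every edge $e\in E$ is directed, and each internal edge $e\in E_\inte$ has both endpoints internal, while each boundary edge is incident to exactly one boundary vertex $b_i$ and one internal vertex. For an internal vertex $v$, exactly $h(v)$ of the $\deg(v)$ incident edges point toward $v$ (are \emph{incoming} at $v$) and $\deg(v)-h(v)$ point away. The key identity is obtained by summing, over all internal vertices $v\in V_\inte$, the number of incoming edges at $v$; this sum equals $\sum_{v\in V_\inte} h(v)$.

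Next I would evaluate the same sum by classifying each directed edge according to how many internal endpoints receive it as incoming. An internal edge $e\in E_\inte$ contributes $1$ to this sum (it is incoming at exactly one of its two internal endpoints, namely its head). A boundary edge at $b_i$ contributes $1$ if it is directed toward the interior (i.e. $i\in I(\O)$), since then its head is the internal vertex, and contributes $0$ otherwise. Therefore
$$
\sum_{v\in V_\inte} h(v) = |E_\inte| + |I(\O)|.
$$
By the same reasoning applied to \emph{outgoing} edges, or by subtracting from $\sum_v \deg(v) = 2|E_\inte| + n$, one gets the complementary identity $\sum_{v\in V_\inte}(\deg(v)-h(v)) = |E_\inte| + (n - |I(\O)|)$; the two are of course equivalent.

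Finally I would solve for $|I(\O)|$. From the displayed identity, $|I(\O)| = \sum_{v\in V_\inte} h(v) - |E_\inte|$, and likewise from the sum of degrees $2|E_\inte| + n = \sum_{v\in V_\inte}\deg(v)$ we can eliminate $|E_\inte|$ to obtain
$$
|I(\O)| = \sum_{v\in V_\inte} h(v) - \frac{1}{2}\Bigl(\sum_{v\in V_\inte}\deg(v) - n\Bigr) = \frac{n}{2} + \sum_{v\in V_\inte}\Bigl(h(v) - \frac{\deg(v)}{2}\Bigr) = h(G),
$$
using the definition of $h(G)$. Since the right-hand side does not depend on $\O$, every perfect orientation yields the same $|I(\O)| = h(G)$, and since $|I(\O)|$ is an integer between $0$ and $n$, we conclude $h(G)\in\{0,1,\dots,n\}$ whenever a perfect orientation exists. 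I do not expect a serious obstacle here; the only point requiring a little care is the edge-classification step — making sure every directed edge is counted with the correct multiplicity in $\sum_{v\in V_\inte} h(v)$ — and confirming that the degree-1 boundary vertices contribute nothing to the internal-vertex sum, which is immediate from $V_\inte = V\setminus\{b_1,\dots,b_n\}$.
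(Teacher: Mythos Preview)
Your argument is correct and is precisely the intended double-counting proof. The paper does not actually supply a proof of this lemma: in the remark immediately following the statement, the author writes ``We leave it as an exercise for the reader.'' So there is no paper proof to compare against, and your approach --- summing incoming edges over $V_\inte$, identifying the total as $|E_\inte| + |I(\O)|$, and then eliminating $|E_\inte|$ via the handshake identity $\sum_{v\in V_\inte}\deg(v) = 2|E_\inte| + n$ --- is exactly the exercise the reader is meant to carry out.
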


\begin{remark}
This lemma expresses the {\it Helicity Conservation Law.}
We leave it as an exercise for the reader.
\end{remark}

For a perfectly orientable Grassmannian graph $G$ of helicity 
$h(G)=k$, let 
$$
\M(G)=\{I(\O)\mid \O\textrm{ is a perfect orientation of }G\}\subset 
{[n]\choose k}.
$$

Here is one result that links Grassmannian graphs with positroids.

\begin{theorem}
\label{th:M=M(G)}
For a perfectly orientable Grassmannian graph $G$ with
$h(G)=k$, the set $\M(G)$ is a positroid of rank $k$.
All positroids have form $\M(G)$ for some $G$.
\end{theorem}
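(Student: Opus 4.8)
The plan is to prove the two assertions of Theorem~\ref{th:M=M(G)} separately, using the characterization of positroids by Grassmann necklaces and Schubert matroids (Theorems above) together with the combinatorics of perfect orientations. The key tool is the following reduction: a perfectly orientable Grassmannian graph $G$ can be turned into an ordinary plabic (or more precisely, bipartite/trivalent) graph by expanding each internal vertex $v$ of type $(h,\deg v)$ into a small gadget --- for instance a ``black'' vertex of helicity $0$, a ``white'' vertex of helicity equal to its degree, and appropriate internal edges --- so that perfect orientations of $G$ are in bijection with perfect orientations of the expanded graph, and the source sets $I(\O)$ are unchanged. Indeed, a vertex of type $(h,d)$ is itself a ``little Grassmannian'' $Gr(h,d)$, and the matroid of perfect orientations of the one-vertex graph on $d$ boundary edges with helicity $h$ is exactly the uniform matroid $\binom{[d]}{h}$, which is the top positroid cell; this is the base case. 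One then shows that gluing two Grassmannian graphs along a boundary edge corresponds on the level of matroids to an operation (a ``matroid amalgam'' / series-parallel-type composition) that preserves the positroid property.

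\medskip

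First I would establish that $\M(G)$ is a matroid for any perfectly orientable $G$. The cleanest route is the flow/network interpretation: fix one perfect orientation $\O_0$ with source set $I_0 = I(\O_0)$, and observe that any other perfect orientation differs from $\O_0$ by reversing a collection of edge-disjoint directed paths and cycles (this is the standard ``flow'' description, identical to the plabic-graph case in \cite{Pos2}). From this one reads off that $I(\O)$ is obtained from $I_0$ by the symmetric-difference move along path endpoints, and the exchange axiom for $\M(G)$ follows exactly as for the matroid of a network. Alternatively, invoke the first characterization of matroids via polytope edges: $P_{\M(G)}$ is the projection of the flow polytope of the associated network, whose vertices correspond to integral flows, and edges of such polytopes always change two coordinates by $\pm 1$. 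Either way, $\M(G)$ is a matroid of rank $h(G)=k$ by the Helicity Conservation Law (the preceding Lemma).

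\medskip

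Second, and this is where the positroid condition enters, I would check condition (2) of the polytope characterization of positroids, or equivalently produce the Grassmann necklace directly. Here the planarity of $G$ and the clockwise cyclic labeling of $b_1,\dots,b_n$ are essential: for each $i$, among all perfect orientations one can find a ``leftmost'' or ``greedy'' orientation that makes $I(\O)$ Gale-minimal among elements of $\M(c^{-i}\M(G))$ --- intuitively, by routing flows as far clockwise from $b_i$ as the planar structure allows. Concretely, I would show $J_{i+1} := c^i(I_{\min}(c^{-i}\M(G)))$ satisfies the Grassmann necklace recursion, using that a planar network has no crossing flows so the source set can be chosen monotonically. Then Theorem~\ref{th:M=Schubert} (or the Grassmann-necklace bijection) identifies $\M(G)$ with a genuine positroid. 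The surjectivity statement --- every positroid arises as some $\M(G)$ --- then follows because \cite{Pos2} already realizes every positroid cell $\Pi_\M$ by a plabic graph $G$, and plabic graphs are the special case of Grassmannian graphs in which every internal vertex has type $(1,\deg v)$ (white) or $(\deg v - 1, \deg v)$ (black); one checks that for such $G$ the set $\M(G)$ defined here coincides with the matroid of the positroid cell parametrized by $G$ in \cite{Pos2}.

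\medskip

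The main obstacle I anticipate is the second step: proving that the planar/cyclic structure forces the source-set collection to close up into a Grassmann necklace rather than merely a matroid. For plabic graphs this is exactly Postnikov's theorem \cite[Theorem~3.7 and \S17]{Pos2}, so the real work is the reduction from general Grassmannian graphs to plabic graphs --- i.e., verifying that the vertex-expansion gadget can be chosen planar, preserves perfect orientations bijectively, and does not alter the boundary source sets. Once that reduction is in place, both halves of the theorem follow from the already-cited results about plabic graphs and positroids, so I would spend most of the proof making the vertex-expansion construction precise and checking its compatibility with perfect orientations.
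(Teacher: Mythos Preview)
Your overall strategy---refine each Grassmannian graph to a plabic graph and then invoke the known results for plabic graphs from \cite{Pos2}---is exactly the paper's approach, and your identification of the reduction step as ``the real work'' is correct. Two points need correction, however. First, the correspondence of perfect orientations under refinement is \emph{not} a bijection: a perfect orientation of the coarser graph $G'$ generally extends to several perfect orientations of its plabic refinement $G$ (since the little graph replacing a vertex of type $(h,d)$ has many perfect orientations with a given boundary pattern). What the paper actually uses, and what suffices, is only that every perfect orientation of $G'$ lifts to \emph{some} perfect orientation of $G$ and conversely every perfect orientation of $G$ induces one of $G'$; this gives $\M(G)=\M(G')$ without bijectivity. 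Second, your sample gadget (``black vertex of helicity $0$, white vertex of helicity equal to its degree'') is garbled: vertices of type $(0,d)$ or $(d,d)$ are the extraneous ones the paper explicitly excludes. The correct replacement for a vertex of type $(h,d)$ is a complete reduced plabic graph of type $(h,d)$, which is precisely the paper's notion of refinement. Once you make these two fixes, the intermediate matroid and Grassmann-necklace arguments you sketch become unnecessary: the reduction alone, plus the citation to \cite{Pos2}, finishes the proof.
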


\begin{definition}
A {\it strand\/} $\alpha$ in a Grassmannian graph $G$ is a directed 
walk along edges of $G$ that either starts and ends 
at some boundary vertices, or is a closed 
walk in the internal subgraph $G_\inte$, 
satisfying the following {\it Rules of the Road:}
For each internal vertex $v\in V_\inte$ with adjacent edges labelled
$a_1,\dots,a_d$ in the clockwise order,
where $d=\deg(v)$, if $\alpha$ enters $v$ through the edge $a_i$, it leaves $v$ 
through the edge $a_{j}$, where $j= i+h(v) \pmod d$.
\smallskip

A Grassmannian  graph $G$ is {\it reduced\/} if 
\begin{enumerate}
\item There are no strands which are closed loops 
in the internal subgraph $G_\inte$.
\item All strands in $G$ are simple curves without self-intersections.
The only exception is that we allow strands
$b_i\to v\to b_i$ where $v\in V_\inte$ is a 
{\it boundary leaf,} that is a vertex of degree 1
connected with $b_i$ by an edge.
\item
Any two strands $\alpha\ne \beta$ cannot have a {\it bad double crossing,}
that is, a pair of vertices $u\ne v$ such that both $\alpha$ and $\beta$ 
pass through $u$ and $v$ and both are directed from $u$ to $v$.
(We allow double crossings where $\alpha$ goes from $u$ to $v$
and  $\beta$ goes from $v$ to $u$.)
\item 
The graph $G$ has no vertices of degree $2$.
\end{enumerate}
\smallskip

The {\it decorated strand permutation\/} $w=w_G$ of a reduced 
Grassmannian graph $G$ is the permutation $w:[n]\to[n]$
with fixed points colored in colors $0$ or $1$
such that 
\begin{enumerate}
\item
$w(i) = j$ if the strand that starts 
at the boundary vertex $b_i$ ends at the boundary vertex $b_j$.
\item
For a boundary leaf $v$ connected to $b_i$,
the decorated permutation $w$ has fixed point $w(i)=i$ 
colored in color $h(v)\in\{0,1\}$.
\end{enumerate}
\smallskip

A {\it complete\/} reduced Grassmannian 
graph $G$ of {\it type\/} $(k,n)$, for $0\leq k\leq n$,
is a reduced Grassmannian graph whose decorated strand permutation
is given by $w(i)=i+k\pmod n$.  
In addition, for $k=0$ (resp., for $k=n$),
we require that $G$ only has $n$ boundary 
leaves of helicity $0$ (resp., of helicity $1$)
and no other internal vertices.
\end{definition}

\begin{theorem}
\label{th:perf_orient_exists}
cf.\ \cite[Corollaries~14.7 and~14.10]{Pos2}
{\rm (1)}
For any permutation $w:[n]\to[n]$ 
with fixed points colored in $0$ or $1$, there
exists a reduced Grassmannian graph $G$ 
whose decorated strand permutation $w_G$ is $w$.

\smallskip

{\rm (2)}
Any reduced Grassmannian graph 
is perfectly orientable.  Moreover,
it has an acyclic perfect orientation.

\smallskip
{\rm (3)}
A reduced Grassmannian graph $G$ is complete of type $(k,n)$ 
if and only if its helicity equals $h(G)=k$ and the number of 
internal faces (excluding $n$ boundary faces) equals
$$
f(k,n)- \sum_{v\in V_\inte} f(h(v),\deg(v)).
$$
where $f(k,n) = (k-1)(n-k-1)$.
A reduced Grassmannian graph is complete if and only if it is not a proper
induced subgraph of a larger reduced Grassmannian graph.  
\end{theorem}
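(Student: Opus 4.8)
The plan is to reduce all three statements to the case of reduced plabic graphs, using the observation that a reduced plabic graph is precisely a reduced Grassmannian graph in which every internal vertex has type $(1,d)$ (``white'') or $(d-1,d)$ (``black''); in that case the Rules of the Road become the usual ``turn maximally right/left'', and a perfect orientation is a plabic perfect orientation. For part (1), I would first realize the decorated permutation $w$ by a reduced plabic graph via the \emph{bridge decomposition} of \cite{Pos2} (see also \cite{ABCGPT}): $w$ is written as a shortest product of adjacent transpositions applied to an explicit base decorated permutation realized by a simple plabic graph (with lollipops at the fixed points of $w$, colored as prescribed), and each transposition is realized by inserting a bridge; one checks inductively that inserting a bridge preserves reducedness and multiplies the strand permutation by that transposition. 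Viewing the resulting plabic graph as a Grassmannian graph gives part (1).

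For part (2) I would pass from an arbitrary reduced Grassmannian graph $G$ to a plabic one by \emph{refinement}: replace each internal vertex $v$ of type $(h(v),\deg v)$ by a complete \emph{trivalent} reduced plabic graph $G_v$ of that type (which exists by part (1)), gluing its $\deg v$ boundary half-edges to the edges formerly incident to $v$. A check of the Rules of the Road shows the resulting all-trivalent graph $G'$ is again reduced and has the same strand permutation as $G$. A reduced plabic graph has an acyclic perfect orientation (again via the bridge decomposition: the base graph has one, and inserting a bridge extends it while preserving acyclicity; cf.\ \cite{Pos2}). Restrict such an orientation $\O$ of $G'$ to $G$: applying the Helicity Conservation Law (the Lemma) inside $G_v$ shows that exactly $h(v)$ of the edges at $v$ point towards $v$, so the restriction is a perfect orientation of $G$; and any directed cycle in $G$ would lift to a directed closed walk in $G'$, impossible since $\O$ is acyclic.

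For part (3), assume $0<k<n$ (the cases $k=0,n$ are immediate from the definition). First, if $G$ is complete of type $(k,n)$, i.e.\ $w_G(i)=i+k\pmod n$, then by Theorem~\ref{th:M=M(G)} the positroid $\M(G)$ with this decorated permutation is the uniform matroid $\binom{[n]}{k}$, so $h(G)=k$. Next, refine $G$ to the trivalent reduced plabic graph $G'$ as in part (2): refining a vertex $v$ creates exactly $f(h(v),\deg v)$ new internal faces — the internal faces of $G_v$ — while each of the $\deg v$ boundary faces of $G_v$ merely merges with the ambient face at the corresponding corner of $v$; hence $G'$ has $\sum_{v\in V_\inte}f(h(v),\deg v)$ more internal faces than $G$. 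Since $G'$ is a reduced plabic graph with strand permutation $i\mapsto i+k$, it parametrizes the top cell $Gr^{>0}(k,n)$, so its number of faces is $\dim Gr^{>0}(k,n)+1=k(n-k)+1$, of which $n$ are boundary faces; thus $G'$ has $k(n-k)+1-n=(k-1)(n-k-1)=f(k,n)$ internal faces, and $G$ has $f(k,n)-\sum_v f(h(v),\deg v)$ of them. Conversely, if $G$ is reduced with $h(G)=k$ and exactly this many internal faces, then $G'$ has $f(k,n)$ internal faces, hence $k(n-k)+1$ faces, hence parametrizes a rank-$k$ positroid cell of dimension $k(n-k)$; the top cell is the unique such cell, so $\M(G)=\binom{[n]}{k}$ and $w_G(i)=i+k\pmod n$. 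Finally, for the equivalence with maximality: refining any reducible internal vertex yields a strictly larger reduced graph with the same strand permutation, while no reduced graph whose strand permutation is already $i\mapsto i+h(G)\pmod n$ admits an enlargement (its internal-face count, computed as above, is the maximum compatible with the given $h(G)$ and $n$); hence ``$G$ is not a proper induced subgraph of a larger reduced Grassmannian graph'' is equivalent to completeness.

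The main obstacle is the Euler-characteristic bookkeeping underlying part (3): one must carefully track which faces are created and which merge under refinement, especially along the boundary of each inserted $G_v$, and the argument rests on the input — inherited from the theory of reduced plabic graphs — that the number of faces of a reduced plabic graph equals one plus the dimension of the corresponding positroid cell, with the top cell $Gr^{>0}(k,n)$, of dimension $k(n-k)$, being the unique rank-$k$ cell of maximal dimension. Pinning down this face/dimension dictionary and verifying that refinement preserves reducedness and the strand permutation is where the real work lies; parts (1) and (2) are then essentially formal, via the bridge decomposition and the Helicity Conservation Law.
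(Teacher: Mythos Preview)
Your approach is essentially the paper's: reduce everything to the plabic case via refinement and invoke \cite{Pos2}. The paper's combined proof of Theorems~\ref{th:M=M(G)}, \ref{th:perf_orient_exists}, \ref{th:move_refinement_equivalence}, \ref{th:positroids_perm_neckl_moves} is very terse --- it simply records that refinement preserves reducedness, strand permutation, helicity, and perfect orientability, and that every Grassmannian graph refines to a plabic one --- whereas you supply concrete mechanisms (bridge decomposition for (1) and (2), the Euler/face--dimension bookkeeping for the first half of (3)) that the paper leaves inside the citation to \cite{Pos2}. So the route is the same, with more detail on your side.

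One genuine wrinkle: your argument for the final clause of (3) conflates refinement with the induced-subgraph relation. Refining a vertex $v$ \emph{replaces} $v$ by a little graph $G_v$; the vertex $v$ is gone, so $G$ is not a subgraph of the refined graph at all. Thus ``refining any reducible internal vertex yields a strictly larger reduced graph with the same strand permutation'' does not exhibit $G$ as a proper induced subgraph of anything, and in any case the relevant dichotomy is complete versus non-complete, not plabic versus non-plabic. To show ``not complete $\Rightarrow$ proper induced subgraph of a larger reduced graph'' you must \emph{extend} $G$ --- for instance by inserting a bridge near the boundary, which genuinely keeps $G$ as an induced subgraph and raises the positroid-cell dimension by one --- and for the converse you need that any reduced proper extension of a complete $G$ (same boundary) would force a reduced plabic refinement with more than $k(n-k)+1$ faces, hence a rank-$k$ positroid cell of dimension exceeding $k(n-k)$, which is impossible. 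The paper's proof does not spell this clause out either, but your version as written does not establish it.
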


\begin{figure}[h]
\label{fig:Grass_graph}
\includegraphics[height=.8in,width=1.2in]{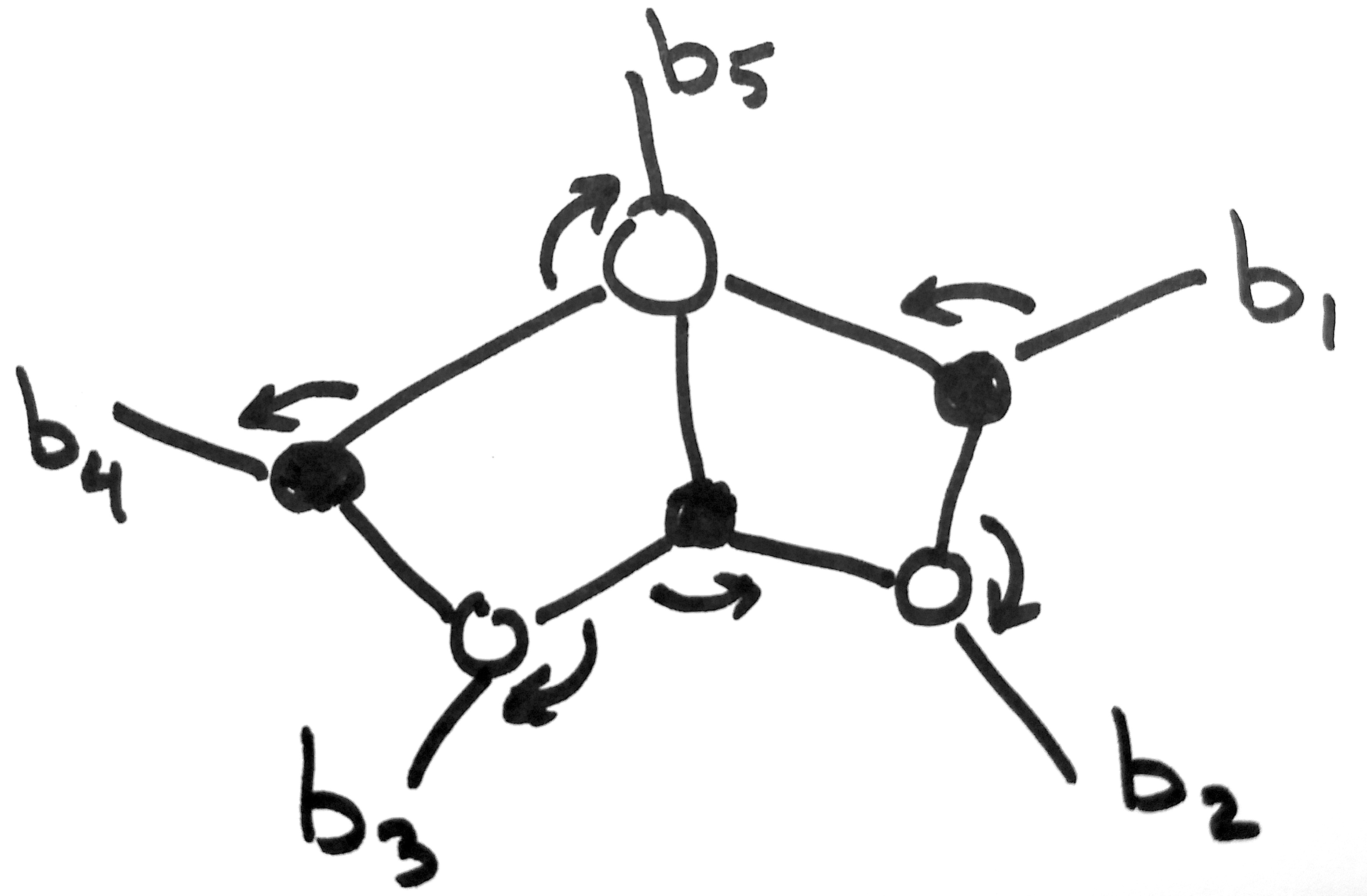}
\qquad \qquad
\includegraphics[height=.8in,width=1in]{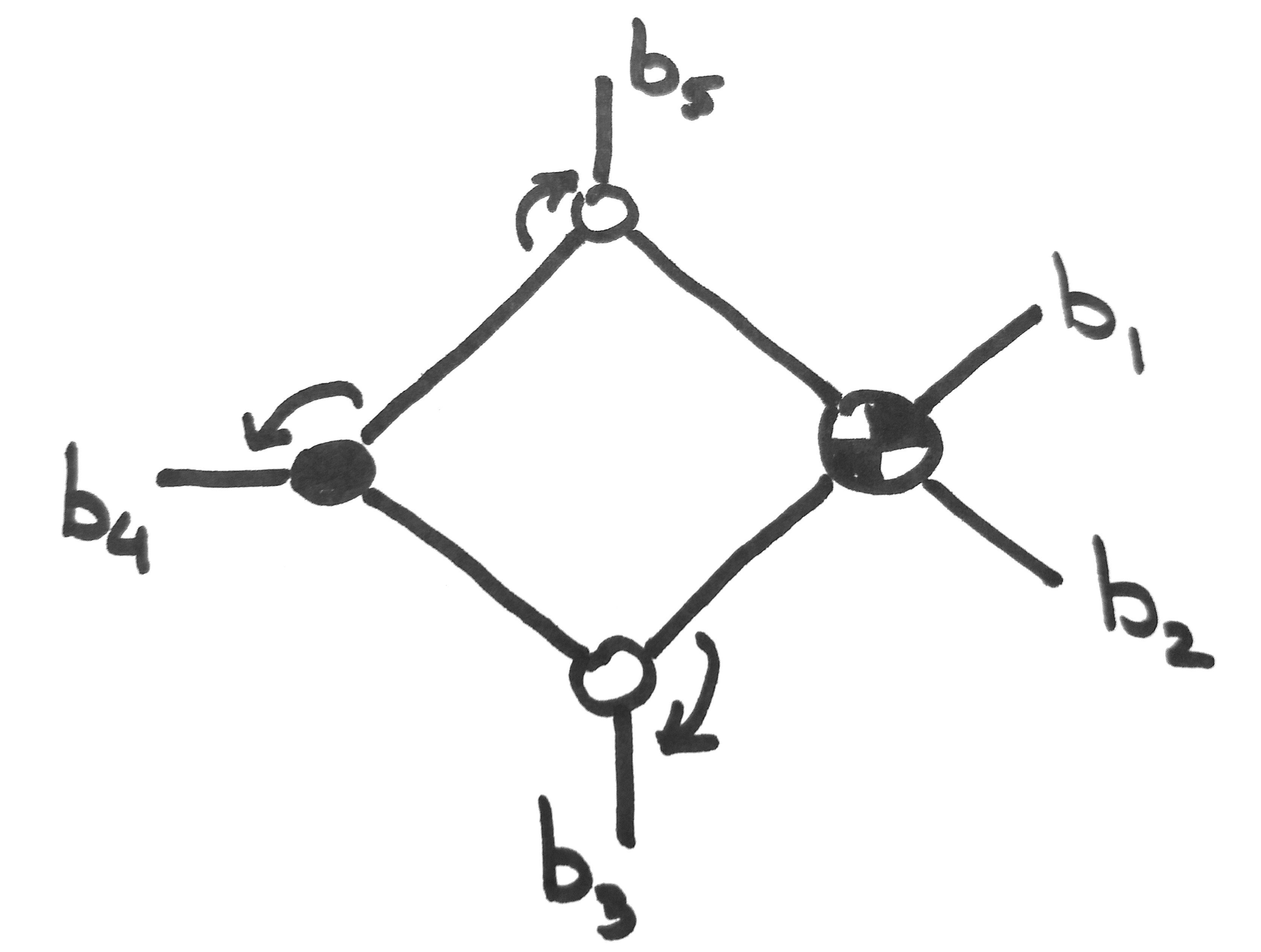}
\caption{Two complete reduced Grassmannian graphs of type $(2,5)$
with 2 internal faces (left) and 1 internal face (right).
The internal vertices of types $(1,3)$ and $(1,4)$ 
are colored in white, the type $(2,3)$ vertices colored in black, 
and the type $(2,4)$ vertex is ``chessboard'' colored.}
\end{figure}

Let us now describe a partial ordering and an equivalence relation
on Grassmannian graphs.

\begin{definition}
For two Grassmannian graphs $G$ and $G'$, we say that $G$ {\it refines\/} $G'$
(and that $G'$ {\it coarsens\/} $G$), if $G$ can be obtained from $G'$
by a sequence of the following operations:
Replace an internal vertex of type $(h,d)$ by a complete 
reduced Grassmannian graph of type $(h,d)$.

The {\it refinement order\/} on Grassmannian graphs is the partial order
$G\leq_\refi G'$ if $G$ refines $G'$. 
We say that $G'$ {\it covers\/} $G$, if $G'$ covers $G$ in the 
refinement order.

Two Grassmannian graphs $G$ and $G'$ are 
{\it refinement-equivalent\/} 
if they are 
in the same connected component of the refinement order $\leq_\refi$,
that is,  they can be obtained from each other by a sequence
of refinements and coarsenings.
\end{definition}

\begin{definition}
A Grassmannian graph is called a {\it plabic graph\/}
if it is a minimal element in the refinement order.
\end{definition}

The following is clear.

\begin{lemma}
A Grassmannian graph is a plabic graph if and only if 
each internal vertex in the graph has type $(1,3)$, $(2,3)$,
$(0,1)$, or $(1,1)$.
\end{lemma}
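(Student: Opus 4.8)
The plan is to unwind the definition of a plabic graph as a minimal element of the refinement order and then classify the ``irreducible'' internal vertex types. First, observe that $G$ is a plabic graph (i.e.\ minimal for $\leq_\refi$) if and only if no internal vertex of $G$ can be nontrivially refined: if $G'<_\refi G$, then, discarding the trivial steps (replacing a vertex by the single-vertex graph of its type changes nothing), the first remaining step already replaces some internal vertex $v$ of $G$, of some type $(h,d)$, by a complete reduced Grassmannian graph of type $(h,d)$ different from the single vertex, and conversely any such replacement produces a graph $<_\refi G$. Since ``complete reduced Grassmannian graph of type $(h,d)$'' means a reduced Grassmannian graph on $d$ boundary vertices whose decorated strand permutation is the shift $i\mapsto i+h\pmod d$ (together with the extra clause for $h\in\{0,d\}$), the lemma is equivalent to the claim that the single-vertex graph of type $(h,d)$ is the unique reduced Grassmannian graph realizing the $h$-shift on $d$ boundary vertices exactly when $(h,d)\in\{(0,1),(1,1),(1,3),(2,3)\}$.

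For $(0,1)$ and $(1,1)$ this is immediate from the extra clause in the definition, which for $n=1$ forces the graph to be a single boundary leaf. The crux is that $(1,3)$ and $(2,3)$ are of the same kind. Let $H$ be a reduced Grassmannian graph on $3$ boundary vertices realizing the $h$-shift with $h\in\{1,2\}$. By Theorem~\ref{th:perf_orient_exists}(3) it has helicity $h$ and exactly $f(h,3)-\sum_{v\in V_\inte}f(h(v),\deg v)$ internal faces; as $f(1,3)=f(2,3)=0$, this equals $-\sum_{v\in V_\inte}f(h(v),\deg v)$. In a reduced graph every internal vertex $v$ either has $\deg v=1$, whence $f(h(v),\deg v)=f(0,1)=f(1,1)=0$, or has $\deg v\ge 3$ with $1\le h(v)\le\deg v-1$ (a vertex with $h(v)\in\{0,\deg v\}$ and $\deg v\ge 2$ would force a strand to backtrack at a non-leaf, violating reducedness), and in the latter case $f(h(v),\deg v)=(h(v)-1)(\deg v-h(v)-1)\ge 0$. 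Thus $\sum_v f(h(v),\deg v)\ge 0$ while the number of internal faces is $\ge 0$; being negatives of one another, both vanish. Hence $H$ is a tree (it is connected: a component free of boundary vertices would carry a closed-loop strand, forbidden by reducedness, and the $3$-cycle $h$-shift keeps $b_1,b_2,b_3$ in one component), with no boundary leaves (the $h$-shift has no fixed points), so its leaves are exactly $b_1,b_2,b_3$ and all internal vertices have degree $\ge 3$ (degree-$2$ vertices are forbidden, and a degree-$1$ internal vertex would again backtrack). The handshake identity $\sum_v\deg v=2(|V|-1)$ on such a tree forces exactly one internal vertex, of degree $3$, and the helicity identity gives it helicity $h$. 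So $H$ is the single trivalent vertex of type $(h,3)$, as needed.

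For the converse, every other type $(h,d)$ with $d\ge 1$ admits a nontrivial refinement. If $d=2$, or if $d\ge 2$ and $h\in\{0,d\}$, the single-vertex graph of type $(h,d)$ is not even reduced (degree-$2$ vertices are forbidden; and for $h\in\{0,d\}$, $d\ge 2$, a strand backtracks at it), whereas the corresponding complete reduced graph exists and differs from a single vertex --- the single edge $b_1b_2$ for $(1,2)$, and the disjoint union of $d$ boundary leaves for $h\in\{0,d\}$ by the extra clause (this also settles $d=3$, $h\in\{0,3\}$). For $d\ge 4$ and $1\le h\le d-1$, one exhibits a reduced Grassmannian graph realizing the $h$-shift with more than one internal vertex: a ``caterpillar'' of trivalent vertices does this (for example two vertices of type $(1,3)$ joined by an edge, each also attached to two boundary vertices, realizes the $4$-cycle, hence the case $(1,4)$; the general construction is the standard reduced plabic graph of the top cell of $Gr(h,d)$, cf.~\cite{Pos2}). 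Together with the previous paragraph this gives the equivalence.

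The main obstacle is the rigidity argument in the second paragraph: showing that a reduced graph realizing the length-$3$ shift can only be the single trivalent vertex (the $(2,3)$ case also following by the color-swapping symmetry $h(v)\mapsto\deg v-h(v)$, under which $f(h,d)=f(d-h,d)$). Its two essential ingredients are the face-count identity of Theorem~\ref{th:perf_orient_exists}(3) and the elementary sign observation $f(h(v),\deg v)\ge 0$ for internal vertices of a reduced graph; granted these, a short Euler/handshake count finishes it. The only other point requiring care, used for the converse, is the (routine, cf.~\cite{Pos2}) construction of larger reduced graphs realizing a prescribed shift permutation when $d\ge 4$.
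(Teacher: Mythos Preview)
Your argument is correct. The paper itself offers no proof at all, merely asserting that the lemma ``is clear,'' so there is nothing to compare against at the level of method. What you have written supplies the details the paper omits: you correctly reduce minimality in the refinement order to the question of which types $(h,d)$ admit only the single-vertex complete reduced Grassmannian graph, and you handle the two nontrivial cases $(1,3)$ and $(2,3)$ via the face-count identity of Theorem~\ref{th:perf_orient_exists}(3) together with the observation that $f(h(v),\deg v)\ge 0$ for every internal vertex of a reduced graph. That forces zero internal faces, hence a tree, and the handshake count pins down a single trivalent vertex. For the converse you correctly note that the single vertex is not reduced when $d=2$ or $h\in\{0,d\}$, and for $d\ge 4$ with $1\le h\le d-1$ you invoke the standard trivalent constructions from~\cite{Pos2}; one can also see this numerically, since a reduced trivalent graph of type $(h,d)$ has $d-2+2(h-1)(d-h-1)\ge 2$ internal vertices once $d\ge 4$.

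Two small remarks. First, the uniqueness for $(1,3)$ and $(2,3)$ can also be seen by a bare-hands enumeration (three boundary vertices leave very little room), so your use of Theorem~\ref{th:perf_orient_exists}(3) is elegant but heavier than strictly necessary; this is presumably why the paper regards the lemma as evident. Second, be aware that the paper proves Theorem~\ref{th:perf_orient_exists} by reduction to the plabic case, so if one were being maximally careful about logical order one would want to check that the $n=3$ instance of the face-count formula used here does not itself rely on the present lemma; it does not, since for $n=3$ the formula is immediate from Euler's relation.
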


In drawings of plabic and Grassmannian graphs, 
we color vertices of types $(1,d)$ in white
color, and vertices of types $(d-1,d)$ in black color.

Let us now describe almost minimal elements in the refinement order.

\begin{definition}
A Grassmannian graph $G$ is called {\it almost plabic\/}
if it covers a plabic graph (a minimal element) in the refinement order.
\end{definition}

For example, the two graphs shown on Figure~\ref{fig:Grass_graph} 
are almost plabic.
The following lemma is also straightforward from the definitions.

\begin{lemma}
Each almost plabic Grassmannian graph $G$ has 
exactly one internal vertex (special vertex) 
of type $(1,4)$, $(2,4)$, $(3,4)$, $(0,2)$, $(1,2)$, or $(2,2)$,
and all other internal vertices of types $(1,3)$, $(2,3)$, $(0,1)$, or $(1,1)$.
An almost plabic graph with a special vertex of type of type $(1,4)$, $(2,4)$, or
$(3,4)$ covers exactly two plabic graphs.
An almost plabic graph with a special vertex of type $(0,2)$,
$(1,2)$, or $(2,2)$ covers exactly one plabic graph.
\end{lemma}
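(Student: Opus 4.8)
The plan is to analyze the refinement order locally, near a plabic graph, by keeping track of exactly which internal vertices get ``un-refined'' when one passes to a coarsening. Recall that a coarsening of $G$ merges an induced subgraph that is itself a complete reduced Grassmannian graph of type $(h,d)$ into a single internal vertex of type $(h,d)$. If $G$ is almost plabic, then $G$ covers some plabic graph $G_0$; by definition of the cover relation, $G$ is obtained from $G_0$ by a \emph{single} coarsening move, i.e.\ by contracting one induced complete reduced Grassmannian subgraph $H\subseteq G_0$ of type $(h,d)$ to a vertex $v$. Since $G_0$ is plabic, all its internal vertices have type $(1,3)$, $(2,3)$, $(0,1)$, or $(1,1)$, so $H$ is a complete reduced Grassmannian graph \emph{built out of such vertices}. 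By Theorem~\ref{th:perf_orient_exists}(3), a complete reduced Grassmannian graph of type $(h,d)$ has internal face count $f(h,d)=(h-1)(d-1-h)$. Thus the first step is to enumerate which types $(h,d)$ admit a complete reduced Grassmannian graph whose vertices are \emph{only} of the four plabic types and which has \emph{strictly more than one} internal vertex (so that the coarsening is nontrivial), but which cannot be further coarsened except in one step to the all-plabic form --- equivalently, $H$ itself has no intermediate coarsening. A short case check on $(h,d)$ shows the possibilities for $v$ are exactly the six listed types: the degree-$4$ cases $(1,4),(2,4),(3,4)$ (where a complete reduced graph of that type has $f$ equal to $0$, $1$, or $0$ respectively and is glued from two or three trivalent vertices, or is a single vertex of type $(0,1)$--$(1,1)$ patterns only in degenerate slots), and the degree-$2$ cases $(0,2),(1,2),(2,2)$ (contracting a path of two boundary-leaf-type vertices or a degenerate bivalent merge). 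For all other types $(h,d)$ with $d\ge 4$, a complete reduced Grassmannian graph admits a proper chain of coarsenings, so contracting it would not produce a \emph{cover} of a plabic graph but something strictly higher.

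\textbf{Uniqueness of the special vertex.}
The second step is to show there is \emph{exactly one} such special vertex, i.e.\ the coarsening witnessing the cover is unique up to the resulting graph. Suppose $G$ had two internal vertices of non-plabic type. Each of them, on refining, would have to be replaced by a complete reduced Grassmannian graph of its type with $\ge 2$ internal vertices (or, for the degree-$2$ types, by the degenerate plabic configuration); performing these refinements one at a time exhibits $G$ as strictly above two distinct intermediate Grassmannian graphs, each of which is itself strictly above a plabic graph --- contradicting that $G$ \emph{covers} a plabic graph (a cover allows only one step). Hence $G$ has a unique non-plabic internal vertex, and all others are of plabic type. This is where one must be slightly careful: one needs that refining at the special vertex actually lands in a plabic graph and does not create new non-plabic vertices. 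But that is immediate, since the replacement graph (a complete reduced Grassmannian graph of type $(h,d)$ with $d\le 4$, chosen minimal) consists by the enumeration above only of vertices of types $(1,3),(2,3),(0,1),(1,1)$ --- this uses Theorem~\ref{th:perf_orient_exists}(1),(2) to know such minimal-refinement graphs exist and are reduced, together with the classification of plabic types.

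\textbf{Counting the covered plabic graphs.}
The final step is the enumeration of how many plabic graphs a given almost plabic $G$ covers. Refining $G$ means choosing a plabic graph of the type of the special vertex $v$ and inserting it in place of $v$, keeping the cyclic order of the $\deg(v)$ edges at $v$. So the number of plabic graphs covered by $G$ equals the number of distinct plabic (i.e.\ minimal-refinement) graphs of type $(h(v),\deg(v))$ with the prescribed boundary cyclic structure. For $\deg(v)=4$: a complete reduced Grassmannian graph of type $(1,4)$, $(2,4)$, or $(3,4)$ on four cyclically ordered boundary edges, built from trivalent (and in the $(1,4)$/$(3,4)$ cases, possibly boundary-leaf) vertices, comes in exactly \emph{two} combinatorial forms --- these are precisely the two sides of a square-move/``mutation'' flip (the two ways of triangulating a square, in the associahedron picture for $f(h,d)$ small). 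For $\deg(v)=2$: a type $(0,2)$, $(1,2)$, or $(2,2)$ vertex sits on two boundary edges, and its unique refinement is the corresponding ``insert a pair of leaves / contract a bivalent vertex'' move, giving exactly \emph{one} plabic graph. One verifies these counts by direct inspection of the (very small) reduced Grassmannian graphs on $3$ or $4$ boundary vertices, using Theorem~\ref{th:perf_orient_exists}(3) to pin down the internal face numbers. \textbf{The main obstacle} is the bookkeeping in the degree-$4$ case: one must confirm that precisely the two ``flip'' configurations occur and that no third reduced configuration is hiding (and that the two are genuinely non-refinement-equivalent to each other as graphs with fixed boundary, i.e.\ the flip is not an equality). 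This is a finite check but is the substantive content; everything else is unwinding the definitions of refinement, cover, and plabic/almost plabic.
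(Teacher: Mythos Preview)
Your approach is essentially what the paper intends: it states that the lemma ``is also straightforward from the definitions'' and gives no further argument, so your unwinding of the refinement and cover relations---showing that a cover of a plabic graph must differ at exactly one vertex, and then enumerating which non-plabic types $(h,d)$ admit plabic refinements with no intermediate coarsening---is exactly the verification the paper leaves to the reader. One small correction: the complete reduced plabic graph of type $(2,4)$ has \emph{four} trivalent vertices (the ``square''), not three, and the two refinements of a $(1,4)$ or $(3,4)$ vertex correspond to contraction--uncontraction moves rather than the square move; these slips do not affect your overall strategy.
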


Note that a reduced Grassmannian graph cannot contain any vertices
of degree 2.
So each reduced almost plabic graph covers exactly two reduced plabic graphs.

\begin{definition}
Two plabic graphs are connected by 
a {\it move\/} of {\it type\/} $(1,4)$, $(2,4)$, or $(3,4)$,
if they are both covered by an almost plabic graph
with a special vertex of the corresponding type.
Two plabic graphs $G$ and $G'$ are {\it move-equivalent\/}
if they can be obtained from each other by a sequence of such moves.
\end{definition}

\begin{figure}[h]
\label{fig:plabic_moves}
\includegraphics[height=.4in,width=1.3in]{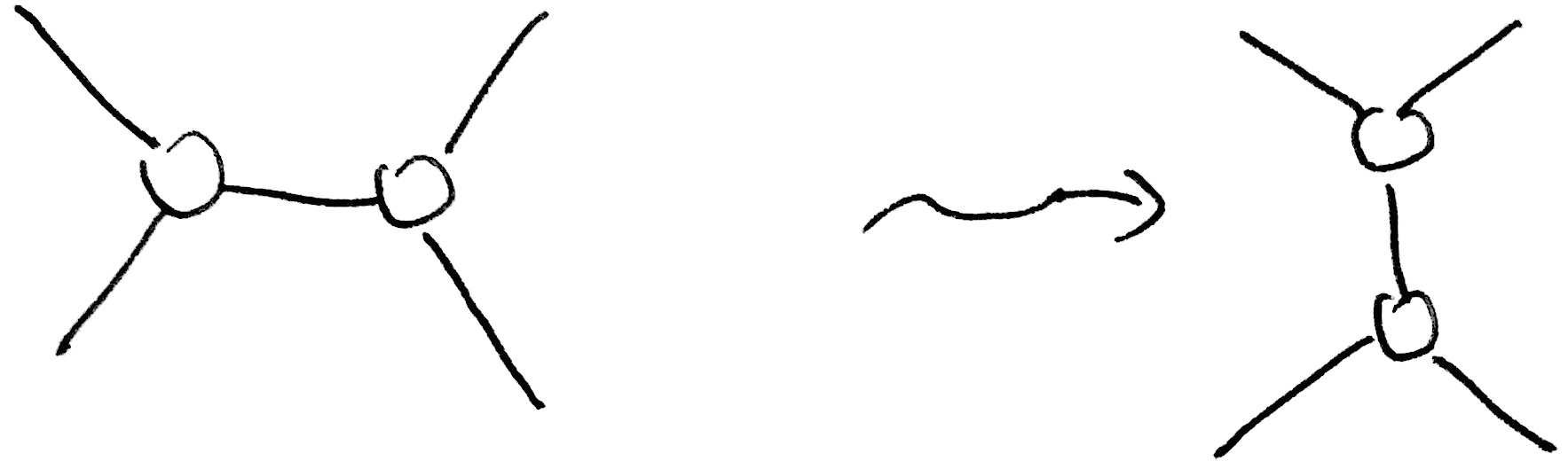}
\qquad
\quad
\includegraphics[height=.4in,width=1.3in]{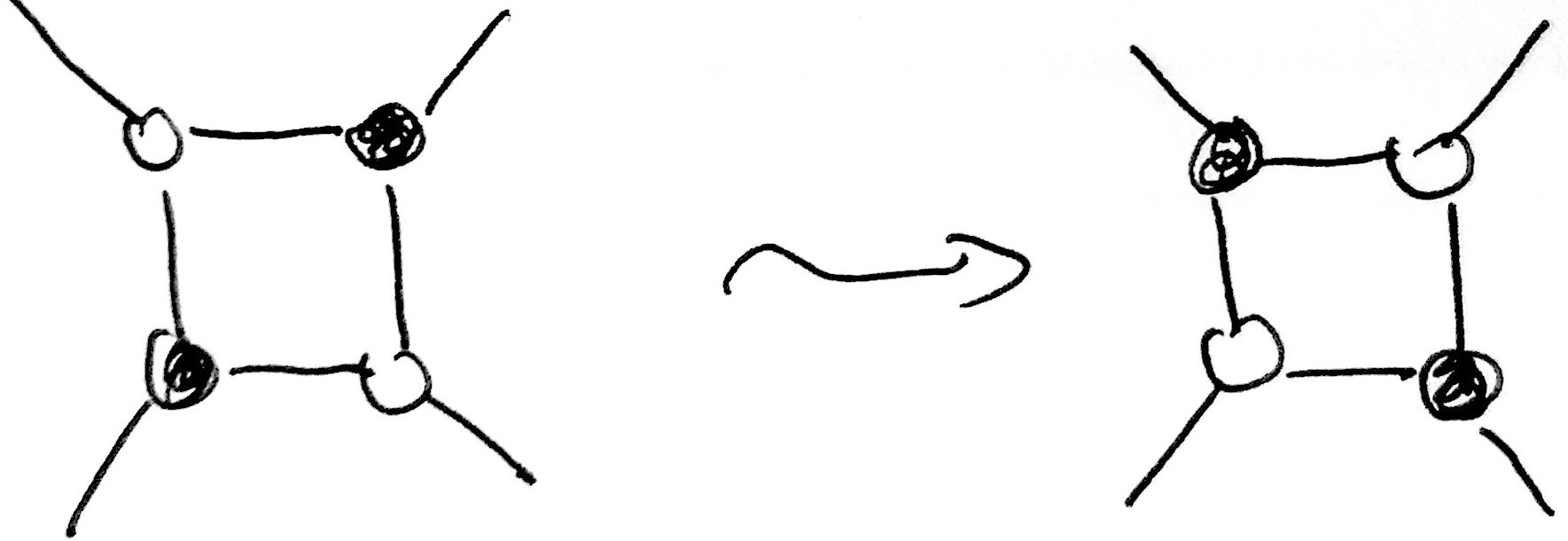}
\qquad
\quad
\includegraphics[height=.4in,width=1.3in]{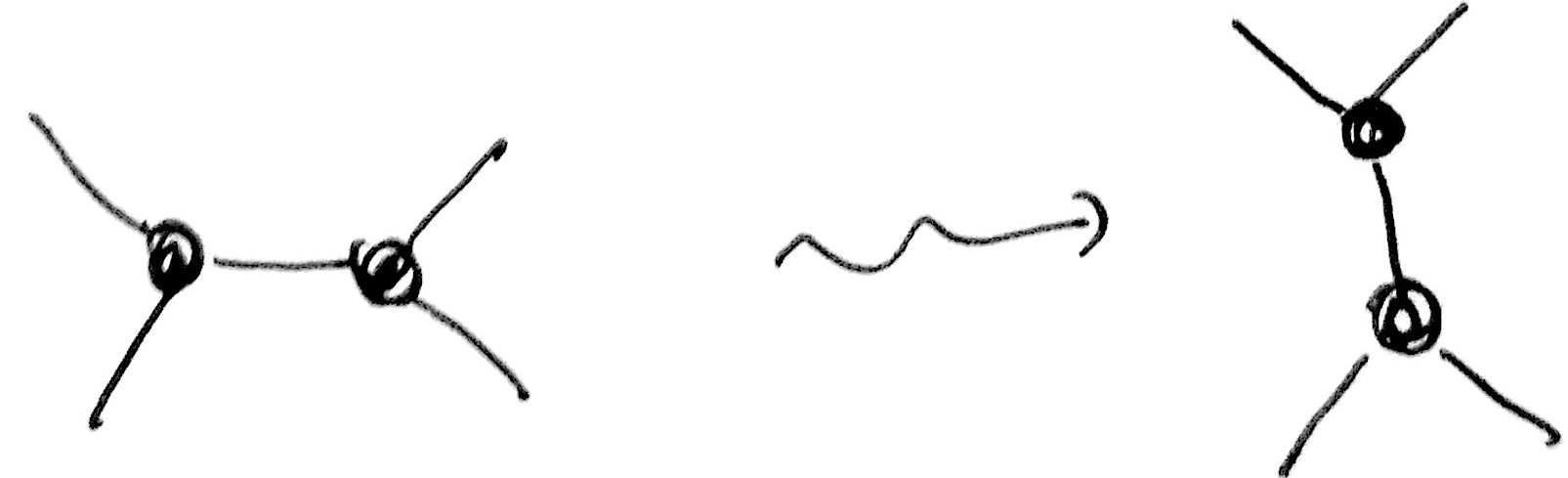}
\caption{Three types of moves of plabic graphs: (1,4) contraction-uncontraction of 
white vertices, (2,4) square move, (3,4) contraction-uncontraction of black
vertices.}
\end{figure}

Let us say that vertices of types $(1,2)$,
$(0,d)$, $(d,d)$ (except boundary leaves) are {\it extraneous.}
A reduced graph cannot have a vertex of this form.

\begin{theorem}
\label{th:move_refinement_equivalence}
{\rm (1)}
For two reduced Grassmannian graphs $G$ and $G'$,
the graphs are refinement-equivalent if and only 
if they have the same decorated strand permutation $w_{G}=w_{G'}$.

\smallskip

{\rm (2)} 
{\rm cf.~\cite[Theorem~13.4]{Pos2}} \
For two reduced plabic graphs $G$ and $G'$, 
the following are equivalent:
\begin{enumerate}
\item[(a)] The graphs are move-equivalent.
\item[(b)] The graphs are refinement-equivalent.
\item[(c)] The graphs have the same decorated 
     strand permutation $w_{G}=w_{G'}$.
\end{enumerate}

\smallskip

{\rm (3)}  A Grassmannian graph is reduced if and only if it 
has no extraneous vertices and is not 
refinement-equivalent to a graph with a pair of parallel
edges (two edges between the same vertices), 
or a loop-edge (an edge with both ends attached to the same vertex).

\smallskip

{\rm (4)} 
A plabic graph is reduced if and only 
if it has no extraneous vertices and is not move-equivalent 
to a plabic graph with a pair of parallel
edges or a loop-edge.
\end{theorem}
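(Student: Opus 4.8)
The plan is to prove the four parts of Theorem~\ref{th:move_refinement_equivalence} by reducing everything to the combinatorics of strands, using the key fact that refinement does not change the strand permutation. The organizing principle is: \emph{refinements and coarsenings leave the decorated strand permutation invariant}, and \emph{among reduced graphs with a fixed permutation, all of them refine down to plabic graphs, and any two plabic refinements are connected by the $(1,4)$, $(2,4)$, $(3,4)$ moves}.

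First I would establish the invariance lemma underlying part~(1): if $G'$ is obtained from $G$ by replacing an internal vertex $v$ of type $(h,d)$ by a complete reduced Grassmannian graph $H$ of type $(h,d)$, then $w_{G'}=w_G$. The Rules of the Road at $v$ send the $i$-th adjacent edge to the $(i+h)$-th; I need that tracing a strand through all of $H$ realizes exactly the same ``shift by $h$'' on the $d$ boundary edges of $H$. But this is precisely the statement that $H$, being complete reduced of type $(h,d)$, has decorated strand permutation $j\mapsto j+h \pmod d$ (with the degenerate boundary-leaf conventions for $h=0$ and $h=d$), which is the defining condition. Moreover reducedness is preserved because a strand that acquired a self-intersection or bad double crossing inside $H$ would already be present as such in $H$ itself, contradicting that $H$ is reduced; and no closed loops are created since $H$ has none. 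This gives one direction of~(1): refinement-equivalent reduced graphs share $w$. For the converse, given reduced $G$ and $G'$ with $w_G=w_{G'}$, I refine each down to a plabic graph (possible by the Lemma characterizing plabic graphs as minimal elements: just keep replacing vertices of type $(h,d)$ with $d\geq 4$ by complete reduced graphs of the same type, which exist and are smaller by Theorem~\ref{th:perf_orient_exists}(1)); by the first direction these plabic refinements still have permutation $w$, so by part~(2c)$\Rightarrow$(2b) they are refinement-equivalent to each other, hence $G$ and $G'$ are too.

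Next, part~(2): I would prove the cycle (a)$\Rightarrow$(c)$\Rightarrow$(b)$\Rightarrow$(a). For (a)$\Rightarrow$(c): each of the three moves is, by definition, passing through an almost plabic graph, i.e.\ a coarsening followed by a refinement, so it preserves $w$ by the invariance lemma above. For (c)$\Rightarrow$(b): this is exactly the plabic special case of the converse direction of~(1), but now I must be careful not to argue circularly --- so here I invoke directly \cite[Theorem~13.4]{Pos2}, which already asserts that reduced plabic graphs with the same decorated permutation are move-equivalent; combined with (a)$\Rightarrow$(b) (a move is a coarsen-then-refine) this yields refinement-equivalence. For (b)$\Rightarrow$(a): if two reduced plabic graphs are refinement-equivalent, connect them by a zigzag of refinements/coarsenings through reduced Grassmannian graphs; it suffices to handle a single coarsening step $G \to G''$ where $G$ is reduced plabic and then re-refine $G''$ to another reduced plabic graph $G'$, showing $G$ and $G'$ are move-equivalent. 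By decomposing a general coarsening into elementary ones (merging a subgraph to a single vertex of the appropriate type) and then re-expanding, and by tracking what happens on the level of the cubical-complex structure of refinements, one reduces to the minimal case where the intermediate graph differs from $G$ at a single vertex of type $(1,4)$, $(2,4)$, or $(3,4)$ --- which is exactly an almost plabic graph, hence exactly a move. (The type $(0,2),(1,2),(2,2)$ intermediate vertices never arise between \emph{reduced} plabic graphs since, as remarked, reduced almost plabic graphs cover exactly two reduced plabic graphs and a degree-$2$ special vertex would force an extraneous or parallel-edge situation.)

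Finally, parts~(3) and~(4) are characterizations of reducedness, and I would deduce them from (1), (2), and the definition of reduced. For~(3): if $G$ is reduced, it has no extraneous vertices (a vertex of type $(1,2)$, $(0,d)$, or $(d,d)$ with $d\geq 2$ is not a boundary leaf, and one checks directly from the Rules of the Road that such a vertex forces either a closed-loop strand, a self-intersection, or a bad double crossing, contradicting reducedness); and if $G$ were refinement-equivalent to a graph $G''$ with parallel edges or a loop-edge, then since refinement-equivalence preserves $w$ and, by~(1), the reduced graphs in a refinement class are exactly characterized by $w$, one argues $G''$ cannot be reduced --- but parallel edges / loop-edges immediately produce either a closed-loop strand in $G_\inte$ or two strands with a bad double crossing (two edges between $u,v$ both traversed $u\to v$ by the two strands through them), so indeed $G''$ is non-reduced, and one shows a non-reduced graph in the class forces $G$ non-reduced by propagating the offending configuration back through the coarsening. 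Conversely, if $G$ has no extraneous vertices and is not refinement-equivalent to such a bad graph, one must produce a certificate of reducedness; here I would use the acyclic perfect orientation guaranteed by Theorem~\ref{th:perf_orient_exists}(2) together with a minimal-counterexample argument: a minimal non-reduced graph with no extraneous vertices and no bad refinement must, after coarsening to a plabic graph, become a non-reduced plabic graph, and a non-reduced plabic graph is move-equivalent to one with parallel edges or a loop-edge --- again by \cite[Theorem~13.4]{Pos2}-type analysis --- giving the required bad graph in the refinement class, a contradiction. Part~(4) is then the plabic-graph restriction of~(3), using that for plabic graphs move-equivalence and refinement-equivalence coincide by~(2).

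The main obstacle I anticipate is the direction (b)$\Rightarrow$(a) in part~(2), namely showing that \emph{any} zigzag of refinements and coarsenings between two reduced plabic graphs can be replaced by a sequence of elementary $(h,4)$ moves. A general coarsening can merge an arbitrarily large reduced subgraph into one high-degree vertex, and re-refining gives a wildly different plabic graph; controlling this requires either (i) a normal-form/confluence argument for the refinement poset restricted to a single permutation class --- morally, that this poset is the face poset of a subdivision (a ``fiber-polytope''-like structure, as hinted in the introduction) and hence connected in codimension one, so any two maximal chains differ by elementary flips --- or (ii) a direct appeal to the plabic-graph move-connectivity theorem \cite[Theorem~13.4]{Pos2} to collapse the whole argument onto the known plabic case. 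I would take route~(ii) to keep the proof self-contained modulo cited results, and flag route~(i) as the conceptually correct explanation, since it is exactly the viewpoint the rest of the paper develops.
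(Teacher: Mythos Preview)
Your proposal is essentially correct and follows the same strategy as the paper: cite \cite{Pos2} for the plabic case, and bootstrap to Grassmannian graphs via the observation that refinement preserves strands, reducedness, and the decorated strand permutation. The paper's proof is in fact extremely terse---it bundles Theorems~\ref{th:M=M(G)}, \ref{th:perf_orient_exists}, \ref{th:move_refinement_equivalence}, \ref{th:positroids_perm_neckl_moves} into one paragraph, simply recording that if $G$ refines $G'$ then strands correspond, $G$ is reduced iff $G'$ is, and $w_G=w_{G'}$; everything else is delegated to \cite{Pos2}.

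The one place you make your life harder than necessary is the cycle you choose in part~(2). You run (a)$\Rightarrow$(c)$\Rightarrow$(b)$\Rightarrow$(a) and then worry at length about (b)$\Rightarrow$(a), flagging it as the ``main obstacle.'' But the natural cycle is (a)$\Rightarrow$(b)$\Rightarrow$(c)$\Rightarrow$(a): a move \emph{is} a coarsen-then-refine, so (a)$\Rightarrow$(b) is immediate; refinement preserves $w$, so (b)$\Rightarrow$(c) is your invariance lemma; and (c)$\Rightarrow$(a) is exactly \cite[Theorem~13.4]{Pos2}. This is precisely your ``route~(ii),'' and it is not a fallback---it is the whole argument. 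Your attempted direct (b)$\Rightarrow$(a) via decomposing zigzags into elementary flips is both unnecessary and, as written, not rigorous (``one reduces to the minimal case'' hides exactly the confluence statement you would need to prove). Drop it.

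For parts~(3) and~(4), you should lean more directly on the fact that reducedness is a refinement-equivalence invariant (which follows once you know $G$ reduced $\Leftrightarrow$ $G'$ reduced for $G\leq_{\refi} G'$). Then~(3) reduces to~(4) by passing to a plabic refinement, and~(4) is again \cite{Pos2}. Your phrase ``propagating the offending configuration back through the coarsening'' is doing the work of this invariance but not stating it cleanly.
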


\begin{remark}
Plabic graphs are similar to {\it wiring diagrams\/} that 
represent decompositions of permutations into products of
adjacent transpositions.  
In fact, plabic graphs extend the notions of wiring diagrams
and, more generally, {\it double wiring diagrams\/} of Fomin-Zelevinsky \cite{FZ1},
see \cite[Remark~14.8, Figure~18.1]{Pos2}.
Moves of plabic graphs are analogous to Coxeter moves
of decompositions of permutations.
Reduced plabic graphs extend the notion of reduced decompositions 
of permutations.
\end{remark}

Let us now summarize the results about the relationship between 
positroids, Grassmannian and plabic graphs, decorated permutations,
and Grassmann necklaces.
For a decorated permutation $w:[n]\to[n]$ (a permutation with fixed
points colored $0$ or $1$), define
$\J(w):=(J_1,\dots,J_n)$, where 
$$
J_{i} = \{j \in [n] \mid c^{-i+1}w^{-1}(j)> c^{-i+1}(j)\} \cup \{j\in [n]\mid 
w(j)= j \textrm{ colored }1\}.
$$
The {\it helicity\/} of $w$ is defined as
$h(w) := |J_1|=\cdots=|J_n|$.
Conversely, for a Grassmann necklace $\J=(J_1,\dots,J_n)$,
let
$$
w(\J):=w, 
\quad\textrm{where }
w(i)=
\left\{
\begin{array}{l}
j \textrm{ if }J_{i+1} = (J_{i}\setminus\{i\})\cup\{j\},\\[.1in]
i \textrm{ (colored $0$) if }i\not\in J_{i} = J_{i+1},\\[.1in]
i \textrm{ (colored $1$) if }i\in J_i= J_{i+1}.
\end{array}
\right.
$$

\begin{theorem}
\label{th:positroids_perm_neckl_moves}
{\rm cf.~\cite{Pos2}} \
The following sets are in one-to-one correspondence:
\begin{enumerate}
\item Positroids $\M$ of rank $k$ on $n$ elements.
\item Decorated permutation $w$ of size $n$ and helicity $k$.
\item Grassmann necklaces $\J$ of type $(k,n)$.
\item Move-equivalence classes of reduced plabic graphs $G$ with $n$ boundary vertices
and helicity $h(G)=k$.
\item Refinement-equivalence classes of reduced Grassmannian graphs $G'$ with
$n$ boundary vertices and helicity $h(G')=k$.
\end{enumerate}
The following maps (described above in the paper) give explicit
bijection between these sets and form a commutative diagram:
\begin{enumerate}
\item Reduced Grassmannian/plabic graphs to positroids: $G\mapsto \M(G)$.
\item Reduced Grassmannian/plabic graphs to decorated permutations: $G\mapsto w_G$.
\item Positroids to Grassmann necklaces: 
$\M\mapsto \J(\M)$. 
\item Grassmann necklaces to positroids:  $\J\mapsto \M(\J)$,
\item Grassmann necklaces to decorated permutations: $\J\mapsto w(\J)$.
\item Decorated permutations to Grassmann necklaces: $w\mapsto \J(w)$.
\end{enumerate}
\end{theorem}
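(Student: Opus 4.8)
The plan is to treat the decorated permutations and Grassmann necklaces as the combinatorial core of the statement, prove $(1)\leftrightarrow(2)\leftrightarrow(3)$ together with the fact that the four maps among these three sets listed in the theorem are mutually inverse bijections, attach the graph classes $(5)$ and $(4)$ through the invariant $G\mapsto w_G$, and finally reduce everything that is left --- commutativity of the diagram and bijectivity of the maps involving $\M(G)$ --- to the single identity $\M(G)=\M(\J(w_G))$.

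First I would verify directly that $w\mapsto\J(w)$ and $\J\mapsto w(\J)$ are mutually inverse bijections between decorated permutations of size $n$ and Grassmann necklaces of type $(k,n)$, with $h(w)=|J_1|=\cdots=|J_n|=k$; the equality of all the $|J_i|$ is immediate from the necklace axiom. The check that the two maps are inverse is a routine unwinding of definitions: the clause $c^{-i+1}w^{-1}(j)>c^{-i+1}(j)$ defining $J_i$ records exactly whether $j$ lies in the Gale-minimal element of the $i$-th cyclic shift, and $w(\J)$ reads the jump $J_i\to J_{i+1}$ back off, the colored fixed points corresponding to the cases $i\in J_i=J_{i+1}$ and $i\notin J_i=J_{i+1}$; I would not belabor it. Combining this with the bijection $\M\mapsto\J(\M)$ between positroids of rank $k$ and Grassmann necklaces of type $(k,n)$ quoted above from \cite{Pos2}, and with Theorem~\ref{th:M=Schubert}, which exhibits the inverse $\J\mapsto\M(\J)$, yields $(1)\leftrightarrow(2)\leftrightarrow(3)$ together with the claimed four maps among these sets.

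Next I would attach the graphs through the decorated strand permutation. By Theorem~\ref{th:move_refinement_equivalence}(1), two reduced Grassmannian graphs are refinement-equivalent if and only if $w_G=w_{G'}$, so $G\mapsto w_G$ descends to an injection on refinement-equivalence classes of reduced Grassmannian graphs; by Theorem~\ref{th:perf_orient_exists}(1) it is onto the set of decorated permutations, hence a bijection $(5)\leftrightarrow(2)$. For $(4)\leftrightarrow(5)$, I would observe that every refinement-equivalence class of reduced Grassmannian graphs contains a reduced plabic graph: refinement preserves reducedness by Theorem~\ref{th:move_refinement_equivalence}(3), the refinement order restricted to a class is a finite poset (for instance bounded by the number of internal vertices), and its minimal elements are by definition the plabic graphs it contains; moreover two reduced plabic graphs lie in the same refinement-equivalence class as Grassmannian graphs if and only if they are move-equivalent, which is Theorem~\ref{th:move_refinement_equivalence}(2)(a)$\Leftrightarrow$(b). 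Hence the inclusion of reduced plabic graphs into reduced Grassmannian graphs induces a bijection on equivalence classes, compatible with $G\mapsto w_G$.

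What remains is to fit $G\mapsto\M(G)$ --- well defined on individual reduced graphs by Theorem~\ref{th:M=M(G)}, since reduced graphs are perfectly orientable by Theorem~\ref{th:perf_orient_exists}(2) --- into the picture by proving
\[
\M(G)=\M\bigl(\J(w_G)\bigr)\qquad\text{for every reduced Grassmannian graph }G .
\]
Granting this, $G\mapsto\M(G)$ equals the composite $(5)\to(2)\to(3)\to(1)$ of bijections already constructed, so it automatically descends to refinement-equivalence classes (and to move-equivalence classes of reduced plabic graphs), is a bijection, and makes the whole diagram commute. I expect this identity to be the main obstacle, since it bridges two genuinely different descriptions of a positroid: the source-set description $\M(G)=\{I(\O)\}$ via perfect orientations, and the combinatorial description of $\M(\J(w_G))$ as an intersection of cyclically shifted Schubert matroids (Theorems~\ref{th:M=Schubert} and~\ref{th:Pishifts}). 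My plan for it is to compare Grassmann necklaces term by term: for each $i$, use an acyclic perfect orientation (Theorem~\ref{th:perf_orient_exists}(2)) chosen greedily with respect to the $i$-th cyclic order, argue that its source set is the Gale-minimal element of $\M(G)$ for that order, and then read off from the Rules of the Road that this source set coincides with the set $J_i$ produced from $w_G$ by the displayed formula --- the strand issuing from $b_i$ detects precisely which boundary edge must reverse as one passes from the $i$-th to the $(i{+}1)$-st cyclic order. This is in essence the Grassmann-necklace computation of \cite{Pos2}, recast for Grassmannian graphs; the delicate points are matching the colored fixed points of $w_G$ with the boundary leaves of helicities $0$ and $1$, and using reducedness (no closed strands, no bad double crossings) to ensure the greedy orientation is genuinely acyclic so that the two formulas agree on the nose.
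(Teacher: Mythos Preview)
Your proposal is correct in outline, but it is organized quite differently from the paper, and one of your steps does more work than is needed.

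In the paper, Theorems~\ref{th:M=M(G)}, \ref{th:perf_orient_exists}, \ref{th:move_refinement_equivalence}, and \ref{th:positroids_perm_neckl_moves} are proved \emph{together} in a single short argument: for plabic graphs everything is imported wholesale from \cite{Pos2}, and the extension to Grassmannian graphs is handled by one observation --- if $G$ refines $G'$, then perfect orientations of $G$ and of $G'$ correspond, strands of $G$ and $G'$ correspond, and hence $\M(G)=\M(G')$, $h(G)=h(G')$, $w_G=w_{G'}$, and $G$ is reduced iff $G'$ is. Since every Grassmannian graph refines to a plabic one, this single observation transports all the plabic statements to Grassmannian graphs at once. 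In particular the identity you isolate as the crux, $\M(G)=\M(\J(w_G))$, is never argued directly for Grassmannian graphs: it holds for plabic $G$ by \cite{Pos2}, and passes to any Grassmannian $G'$ because both sides are invariant under refinement.

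Your route instead treats Theorems~\ref{th:perf_orient_exists} and \ref{th:move_refinement_equivalence} as already established and layers Theorem~\ref{th:positroids_perm_neckl_moves} on top of them. That is internally consistent, but be aware that in the paper those results are not available as prior input --- they are being proved in the same breath --- so your argument has a different dependency structure. More substantively, your plan to establish $\M(G)=\M(\J(w_G))$ for an arbitrary reduced Grassmannian graph by running the greedy-orientation / Rules-of-the-Road computation directly is re-deriving the \cite{Pos2} argument in a more general setting than necessary. The cleaner move, and the one the paper makes, is to note that both $\M(\cdot)$ and $w_{(\cdot)}$ are refinement invariants, refine $G$ to a plabic graph, and quote \cite{Pos2}. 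Your sketch of the direct argument is plausible but the ``delicate points'' you flag (matching boundary leaves to colored fixed points, ensuring the greedy orientation is acyclic at a vertex of arbitrary type $(h,d)$) are exactly the places where working at an internal vertex that is not trivalent makes life harder; reducing to the plabic case sidesteps all of that.
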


\begin{proof}[Proof of Theorems~\ref{th:M=M(G)}, 
\ref{th:perf_orient_exists},
\ref{th:move_refinement_equivalence},
\ref{th:positroids_perm_neckl_moves}]
In case of plabic graphs, most of these results were proved in \cite{Pos2}.
The extension of results to Grassmannian graphs  follows from 
a few easy observations.

Let $G$ and $G'$ be a pair of Grassmannian graphs such that $G$ refines $G'$.
Any perfect orientation of $G$ induces a perfect orientation of $G'$.
Conversely, any perfect orientation of $G'$ can be extended (not uniquely,
in general) to a perfect orientation of $G$.
Thus $G$ is perfectly orientable if and only if $G'$ is perfectly orientable,
and $\M(G)=\M(G')$ and $h(G)=h(G')$.  
Any strand of $G$ corresponds to a strand of $G'$.
The graph $G$ is reduced if and only if $G'$ is reduced.
If they are reduced, then they have the same decorated strand permutation
$w_G=w_{G'}$.
Finally, any Grassmannian graph can be refined to a plabic graph.
So the results for plabic graphs imply the results for Grassmannian graphs.
\end{proof}

\section{Weakly separated collections and cluster algebras}

\begin{definition} 
{\rm \cite{Sco1}, cf.~\cite{LZ}} \
Two subsets $I,J\in {[n]\choose k}$ are 
{\it weakly separated\/} if there is no $a<b<c<d$ such that
$a,c\in I\setminus J$ and $b,d\in J\setminus I$,
or vise versa.
A collection of subsets $S\subset {[n]\choose k}$ is 
{\it weakly separated\/}
if it is pairwise weakly separated.
\end{definition}

This is a variation of Leclerc-Zelevinsky's notion of weak separation
\cite{LZ} given by Scott \cite{Sco1}.
It appeared in their study of
quasi-commuting {\it quantum minors.} 

\begin{definition}
\label{def:face_label}
The {\it face labelling\/} of a reduced Grassmannian graph $G$
is the labelling of faces $F$ of $G$ by subsets $I_F\subset [n]$ given by 
the condition: For each strand $\alpha$ that goes from $b_i$ to $b_j$, 
we have $j\in I_F$
if and only if the face $F$ lies to the left of the strand $\alpha$
(with respect to the direction of the strand from $b_i$ to $b_j$).
\end{definition}

Let us stay that two reduced plabic graphs are {\it contraction-equivalent\/}
if they can be transformed to each other by the moves of type $(1,4)$ and $(3,4)$
(contraction-uncontraction moves) without using the move of type $(2,4)$ (square move).

\begin{theorem} 
\label{th:WS=plabic}
\cite{OPS}
{\rm (1)} 
Face labels of a reduced 
Grassmannian graph form a weakly separated collection
in ${[n]\choose k}$, where $k=h(G)$ is the helicity of $G$.

\smallskip

{\rm (2)}  Every maximal by inclusion weakly separated collection in
${[n]\choose k}$ is the collection of face labels of a complete 
reduced plabic graph of type $(k,n)$.  

\smallskip
{\rm (3)}  This gives a bijection between maximal by inclusion 
weakly separated collections in ${[n]\choose k}$, 
and contraction-equivalence classes of complete reduced plabic graphs
of type $(k,n)$.
\end{theorem}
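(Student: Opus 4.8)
The plan is to deduce part~(1) from the case of plabic graphs, and to obtain parts~(2) and~(3) from an explicit construction of a plabic graph out of a maximal weakly separated collection, establishing the ``purity'' $|\mathcal{C}|=k(n-k)+1$ along the way. For~(1), first reduce to plabic graphs: any reduced Grassmannian graph $G$ admits a refinement $\widehat{G}$ that is a reduced plabic graph, and (as in the proof of Theorem~\ref{th:positroids_perm_neckl_moves}) the strands of $\widehat{G}$ correspond bijectively to those of $G$ while every face of $G$ is also a face of $\widehat{G}$ with the same label; hence the face labels of $G$ form a sub-collection of those of $\widehat{G}$, and it suffices to treat plabic graphs. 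I would record the elementary facts used throughout: in a reduced plabic graph of helicity $k$ every face label is a $k$-set (a short count of Helicity-Conservation type along a path in the dual graph); the labels of two faces sharing an edge differ in a single element; a move of type $(1,4)$ or $(3,4)$ leaves the collection of face labels unchanged, since refining a vertex of type $(1,4)$ or $(3,4)$ creates no new face ($f(1,4)=f(3,4)=0$ in the notation of Theorem~\ref{th:perf_orient_exists}(3)), while a move of type $(2,4)$ changes exactly one label (one new internal face, $f(2,4)=1$): it swaps $T\cup\{a,c\}$ for $T\cup\{b,d\}$, where $a<b<c<d$, the set $T$ is a $(k-1)$-set disjoint from $\{a,b,c,d\}$, and $T\cup\{a,b\},\,T\cup\{b,c\},\,T\cup\{c,d\},\,T\cup\{a,d\}$ are the labels of the four faces around the square.

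To prove~(1) for a reduced plabic graph $G$, I would use that, by Theorem~\ref{th:move_refinement_equivalence}(2), $G$ is move-equivalent to a convenient representative of its class --- say the reduced plabic graph coming from the Le-diagram of $w_G$ --- whose face labels admit an explicit description and are directly checked to be pairwise weakly separated. It then remains to see that each move preserves weak separation of the face-label collection: moves of types $(1,4)$ and $(3,4)$ do not touch it, and for a move of type $(2,4)$ one needs the \emph{mutation lemma}: if $\mathcal{C}$ is weakly separated and contains the five sets $T\cup\{a,b\},T\cup\{b,c\},T\cup\{c,d\},T\cup\{a,d\},T\cup\{a,c\}$, then $(\mathcal{C}\setminus\{T\cup\{a,c\}\})\cup\{T\cup\{b,d\}\}$ is weakly separated as well --- a short case analysis of the interleaving condition in the definition of weak separation. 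Chaining this along a sequence of moves from the Le-diagram graph to $G$ gives~(1) for plabic graphs, hence for all reduced Grassmannian graphs. (Alternatively, one can argue geometrically: for faces $F,F'$ the symmetric difference $I_F\triangle I_{F'}$ consists of the indices $j$ for which the strand terminating at $b_j$ separates $F$ from $F'$, and an obstruction $a<b<c<d$ to weak separation would force two such strands either to fail to be simple or to have a bad double crossing, against reducedness.)

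For~(2), let $\mathcal{C}\subset{[n]\choose k}$ be maximal by inclusion among weakly separated collections. I would first observe that $\mathcal{C}$ contains every cyclic-interval $k$-set (each such set is weakly separated from all of ${[n]\choose k}$, so maximality forces it in); the corresponding vertices $e_I$ will bound the picture. The construction is the \emph{plabic tiling} $\Sigma(\mathcal{C})$ of \cite{OPS}: the polytopal complex with vertex set $\{e_I : I\in\mathcal{C}\}\subset\Delta_{kn}$ whose white $2$-cells are the polygons $\conv\{e_J : J\in\mathcal{C},\ T\subset J\}$ over $(k-1)$-sets $T$ having at least three completions in $\mathcal{C}$, and whose black $2$-cells are the dual polygons over $(k+1)$-sets. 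The candidate plabic graph $G$ is the planar dual of $\Sigma(\mathcal{C})$, so the point is to show that $\Sigma(\mathcal{C})$ is a topological disk whose boundary cycle passes through the vertices $e_I$ with $I$ a cyclic interval; granting this, one reads off from the dual that $G$ is reduced, complete of type $(k,n)$, and has face labels exactly $\mathcal{C}$. I would prove the disk property by induction (for instance on $n$, peeling off a boundary vertex), building up $\Sigma(\mathcal{C})$ and checking at each stage that the partial complex remains a disk --- equivalently that the dual partial graph remains a reduced plabic graph, which is where maximality of $\mathcal{C}$ and the reducedness criterion of Theorem~\ref{th:move_refinement_equivalence}(4) get used. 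Once $\Sigma(\mathcal{C})$ is known to be such a disk, purity is immediate: $|\mathcal{C}|$ is the number of faces of a complete reduced plabic graph of type $(k,n)$, which by Theorem~\ref{th:perf_orient_exists}(3) equals $f(k,n)+n=(k-1)(n-k-1)+n=k(n-k)+1$.

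For~(3), the map sending a graph to its collection of face labels lands in weakly separated collections by~(1), and in \emph{maximal} ones because a complete reduced plabic graph of type $(k,n)$ has exactly $k(n-k)+1$ faces, the common size just computed; it is surjective by~(2). For injectivity up to contraction-equivalence, let $G,G'$ be complete reduced plabic graphs of type $(k,n)$ with the same face labels $\mathcal{C}$; both have decorated strand permutation $i\mapsto i+k\pmod{n}$, so by Theorem~\ref{th:move_refinement_equivalence}(2) they are move-equivalent, but a move of type $(2,4)$ strictly changes the face-label collection --- it introduces a set $I'\notin\mathcal{C}$, since otherwise $\mathcal{C}\cup\{I'\}$ would be weakly separated of size $k(n-k)+2$, contradicting purity --- so a chain of moves from $G$ to $G'$ uses only moves of types $(1,4)$ and $(3,4)$, i.e.\ $G$ and $G'$ are contraction-equivalent; the converse is clear since those moves fix the face labels. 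This yields the bijection. The main obstacle throughout is the disk property of $\Sigma(\mathcal{C})$: extracting from the bare hypothesis that $\mathcal{C}$ is maximal weakly separated enough rigidity to build the tiling and to certify reducedness of its dual is the technical heart of \cite{OPS}, essentially equivalent to the Leclerc--Zelevinsky purity conjecture \cite{LZ}; the remaining steps are bookkeeping on top of the results quoted above.
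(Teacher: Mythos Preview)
The paper does not prove this theorem at all; it simply quotes it from \cite{OPS}. Your outline is broadly faithful to the strategy in \cite{OPS}: reduce (1) to plabic graphs and propagate weak separation along moves (or argue directly with strands), and for (2) build the plabic tiling $\Sigma(\mathcal{C})$ and show it is a disk, which is indeed the technical core. Your derivation of purity from Theorem~\ref{th:perf_orient_exists}(3) is correct.

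There is, however, a genuine gap in your injectivity argument for (3). You argue that a single $(2,4)$ move applied to $G$ replaces some $I\in\mathcal{C}$ by $I'\notin\mathcal{C}$ (using purity), and then conclude that ``a chain of moves from $G$ to $G'$ uses only moves of types $(1,4)$ and $(3,4)$.'' This does not follow: nothing prevents a chain that performs several $(2,4)$ moves, wandering through other maximal weakly separated collections, and eventually returning to $\mathcal{C}$ at $G'$. Your observation only rules out a \emph{single} $(2,4)$ move staying inside $\mathcal{C}$; it says nothing about longer excursions. (Trivially, a $(2,4)$ move followed by its inverse already gives a chain with $(2,4)$ moves whose endpoints have the same face labels.)

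The clean fix is to use the construction from (2) as an actual inverse rather than arguing about chains of moves. Show that for \emph{any} complete reduced plabic graph $G$ with face-label set $\mathcal{C}$, the bipartite contraction $G_{\mathrm{bip}}$ of $G$ has planar dual isomorphic to $\Sigma(\mathcal{C})$: the vertices of the dual are the $e_I$ for $I\in\mathcal{C}$, adjacent faces of $G$ give edges between labels differing in one element, and each internal vertex of $G_{\mathrm{bip}}$ of type $(1,d)$ (resp.\ $(d-1,d)$) yields a white (resp.\ black) $2$-cell of $\Sigma(\mathcal{C})$. Since $\Sigma(\mathcal{C})$ is determined by $\mathcal{C}$ alone, $G_{\mathrm{bip}}$ is determined up to isomorphism, and hence $G$ is determined up to contraction-equivalence. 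This is precisely the content of \cite[Theorem~9.12]{OPS}, which the present paper invokes later when relating plabic graphs to $\pi$-induced subdivisions.
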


\begin{remark}
Weakly separated collections are related to the {\it cluster algebra\/} structure
\cite{FZ3, FZ4, BFZ, FZ5} on the Grassmannian studied by Scott~\cite{Sco2}.
In general, the cluster algebra on $Gr(k,n)$ has infinitely
many clusters.  (See \cite{Sco2} for a classification of finite cases.)
There is, however, a nicely behaved finite set of clusters, called 
the {\it Pl\"ucker clusters,} which are formed by subsets of 
the Pl\"ucker coordinates $\Delta_I$.  
According to \cite[Theorem~1.6]{OPS}, the Pl\"ucker clusters 
for $Gr(k,n)$ are exactly the sets $\{\Delta_I\}_{I\in S}$ 
associated with maximal weakly-separated collections 
$S\subset {[n]\choose k}$. 
They are in bijection with contraction-equivalence classes
of type $(k,n)$ complete reduced plabic graphs, and are given by the $k(n-k)+1$
face labels of such graphs.
Square moves of plabic graphs correspond to {\it mutations\/}
of Pl\"ucker clusters in the cluster algebra.

Theorem~\ref{th:WS=plabic} implies an affirmative answer to 
the {\it purity conjecture\/} of Leclerc and Zelevinsky \cite{LZ}.
An independent solution of the purity conjecture was given 
by Danilov, Karzanov, and Koshevoy \cite{DKK} in terms of {\it generalized
tilings.}
The relationship between the parametrization a positroid cell given by a
plabic graph $G$ (see Section~\ref{sec:perfect_orientation_parametrization} below)
and the Pl\"ucker cluster $\{\Delta\}_{I\in S}$ associated with the same graph
$G$ induces a nontrivial transformation,
called the {\it twist map,} which was explicitly described by
Muller and Speyer \cite{MuS}.
Weakly separated collections appeared in the
study of {\it arrangements of equal minors\/} \cite{FP}.
In \cite{GP} the notion of weakly separated collections 
was extended in the general framework of {\it oriented matroids\/}
and {\it zonotopal tilings.}

\end{remark}

\section{Cyclically labelled Grassmannian}

Let us reformulate the definition of the Grassmannian 
and its positive part in a more
invariant form, which makes its cyclic symmetry manifest.  
In the next section, we will consider ``little positive Grassmannians'' 
associated with vertices $v$ of a Grassmannian graph $G$ whose ground sets
correspond to the edges adjacent to $v$.  There is no natural total ordering 
on such a set of edges, however there is the natural cyclic (clockwise) 
ordering.

We say that a {\it cyclic ordering\/} of a finite set $C$ 
is a choice of closed directed cycle that visits each element of $C$ 
exactly once.  A total ordering of $C$ is {\it compatible\/} with a cyclic ordering 
if it corresponds to a directed path on $C$ obtained by removing an edge of the cycle.
Clearly, there are $|C|$ such total orderings.

\begin{definition}
Let $C$ be a finite set of indices with a cyclic ordering of its elements,
and let $k$ be an integer between $0$ and $|C|$.
The {\it cyclically labelled Grassmannian\/} $Gr(k,C)$ over $\R$
is defined as the subvariety of the projective space $\P^{{|C|\choose k}-1}$ 
with projective Pl\"ucker coordinates $(\Delta_{I})$ labelled by 
{\it unordered\/} $k$-element subsets $I\subset C$
satisfying the Pl\"ucker relations 
written with respect to {\it any\/} total order ``$<$''
on $C$ compatible with the given cyclic ordering:
$$
\sum_{i\in A\setminus B}
(-1)^{ |\{a\in A,\, a>i\}| + |\{b\in B,\, b < i\}|}\,
\Delta_{A\setminus\{i\}}\,
\Delta_{B\cup\{i\}} = 0,
$$
where $A$ and $B$ are any $(k+1)$-element and $(k-1)$-element subsets of $C$,
respectively.
(More precisely, $Gr(k,C)$ is the projective 
algebraic variety given by the radical of the ideal 
generated by the above Pl\"ucker relations.)

The {\it positive part\/} $Gr^{>0}(k,C)$ is the subset of $Gr(k,C)$ 
where the Pl\"ucker coordinates can be simultaneously rescaled so that
$\Delta_I>0$, for all $k$-element subsets $I\subset C$.
\end{definition}

\begin{remark}
The Pl\"ucker relations (written as above) are invariant with respect to cyclic 
shifts of the ordering ``$<$''.
Thus the definition of the cyclically labelled Grassmannian $Gr(k,C)$ is 
independent of a choice of the total order on $C$. 
For example, for $k=2$ and $C=\{1,2,3,4\}$, $Gr(2,C)$ is 
the subvariety of $\P^{6-1}$ given by the Pl\"ucker relation:
$$
\Delta_{\{1,3\}} \,\Delta_{\{2,4\}} = 
\Delta_{\{1,2\}} \, \Delta_{\{3,4\}} + 
\Delta_{\{1,4\}} \, \Delta_{\{2,3\}}. 
$$
Observe the cyclic symmetry of this relation!  
The ordering of indices $2<3<4<1$ gives exactly the same $Gr(2,C)$ 
with the same positive part $Gr^{>0}(2,C)$.
\end{remark}

\begin{remark}
There is a subtle yet important difference between 
the cyclically labelled Grassmannian $Gr(k,C)$ 
with the  Pl\"ucker coorinates $\Delta_I$ 
and the  usual definition of the Grassmannian
$Gr(k,n)$, $n=|C|$, with the ``usual Pl\"ucker coordinates'' 
defined as the minors $D_{(i_1,\dots,i_k)} = \det(A_{i_1,\dots,i_k})$ 
of submatrices $A_{i_1,\dots,i_k}$ of a $k\times n$ matrix $A$.

The $D_{(i_1,\dots,i_k)}$ are labelled by {\it ordered\/} collections 
$(i_1,\dots,i_k)$ of indices.  They are {\it anti-symmetric\/}
with respect to permutations of the indices $i_1,\dots,i_k$.
On the other hand, the $\Delta_{\{i_1,\dots,i_k\}}$ are labelled
by {\it unordered\/} subsets $I=\{i_1,\dots,i_k\}$.  So they are 
{\it symmetric\/} with respect to permutations of the indices
$i_1,\dots,i_k$.

The ``usual Pl\"ucker relations'' for the $D_{(i_1,\dots,i_k)}$ have
the {\it $S_n$-symmetry\/} with respect to {\it all permutations\/} of the ground set.
On the other hand, the above Pl\"ucker relations for the $\Delta_{\{i_1,\dots,i_k\}}$
have only the {\it $\Z/n\Z$-symmetry\/} with respect to {\it cyclic shifts\/}
of the ground set.

Of course, if we fix a total order of the ground set, 
we can rearrange the indices in $D_{(i_1,\dots,i_k)}$ 
in the increasing order and identify 
$D_{(i_1,\dots,i_k)}$, for $i_1<\cdots<i_k$, with $\Delta_{\{i_1,\dots,i_k\}}$.
This identifies the cyclically labelled Grassmannian $Gr(k,C)$ 
with the usual Grassmannian $Gr(k,n)$.
However, this isomorphism is not canonical because it 
depends on a choice of the total ordering of the index set.  
For even $k$, the isomorphism 
is {\it not invariant\/} under cyclic shifts of the index set.
\end{remark}

\section{Perfect orientation parametrization of positroid cells}
\label{sec:perfect_orientation_parametrization}

Positroid cells were parametrized in \cite{Pos2} in terms of 
{\it boundary measurements\/} 
of perfect orientations of plabic graphs.  Equivalent descriptions 
of this parametrization were given in
terms of {\it network flows\/} by Talaska \cite{Talaska} and 
in terms of {\it perfect matchings\/} \cite{PSW, Lam}.
Another interpretation of this parametrization was motivated by
physics \cite{ABCGPT}, where plabic graphs were viewed as {\it on-shell
diagrams,} whose vertices represent little Grassmannians $Gr(1,3)$ and
$Gr(2,3)$ and edges correspond to gluings, see also \cite[Section 14]{Lam}
for a more mathematical description.
Here we give a simple and invariant way to describe the parametrization 
in the general setting of Grassmannian graphs and 
their perfect orientations.  It easily 
specializes to all the other descriptions.  Yet it clarifies the idea of
gluings of little Grassmannians.

\smallskip

Let $G=(V,E)$ be a perfectly orientable Grassmannian graph
with $n$ boundary vertices and helicity $h(G)=k$,
and let $G_\inte=(V_\inte,E_\inte)$ be its internal subgraph.
Also let $E_\bnd = E\setminus E_\inte$ be the set of boundary edges of $G$.

Informally speaking, each internal vertex $v\in V_\inte$ represents the
``little Grassmannian'' $Gr(h,d)$, where $d$ is the degree of vertex $v$ and $h$ is
its helicity.  We ``glue'' these little Grassmannians along the internal edges
$e\in E_\inte$ of the graph $G$ to form a subvariety in the ``big Grassmannian''
$Gr(k,n)$.  Gluing along each edge kills one parameter.  
Let us give a more rigorous description of this construction. 

For an internal vertex $v\in V_\inte$, 
let $E(v)\subset E$ be the set of all adjacent edges
to $v$ (possibly including some boundary edges),
which is cyclically ordered in the clockwise order (as we go
around $v$).
Define the {\it positive vertex-Grassmannian\/} $Gr^{>0}(v)$
as the positive part of the cyclically labelled Grassmannian
$$
Gr^{>0}(v):= Gr^{>0}(h(v),E(v)).
$$
Let $(\Delta_J^{(v)})$ be the Pl\"ucker  coordinates on $Gr^{>0}(v)$,
where $J$ ranges over the set ${E(v)\choose h(v)}$ of all 
$h(v)$-element subsets in $E(v)$.

Let us define several positive tori (i.e., positive parts of complex tori).
The {\it boundary positive torus\/} is $T_\bnd^{>0}:=(\R_{>0})^{E_\bnd}\simeq 
(\R_{>0})^n$.
The {\it internal positive torus\/} is $T_\inte^{>0}:=(\R_{>0})^{E_\inte}$,
and the {\it total positive torus\/} $T_\mathrm{tot}^{>0} := 
T_\bnd^{>0}\times T_\inte^{>0}$.
The boundary/internal/total positive torus is the group of $\R_{>0}$-valued
functions on boundary/internal/all edges of $G$.

These tori act on the positive vertex-Grassmannians $Gr^{>0}(v)$
by rescaling the Pl\"ucker coordinates.
For $(t_e)_{e\in E} \in T_\mathrm{tot}^{>0}$,
$$
(t_e):(\Delta_J^{(v)})\longmapsto
(\left(\prod_{e\in J } t_e\right)\, \Delta_J^{(v)}).
$$

The boundary torus $T_\bnd^{>0}$ also acts of the ``big Grassmannian'' $Gr(k,n)$
as usual $(t_1,\dots,t_n):\Delta_I\mapsto (\prod_{i\in I}t_i)\,\Delta_I$,
for $(t_1,\dots,t_n)\in T_\bnd^{>0}$.

Recall that, for a perfect orientation $\O$ of $G$,
$I(\O)$ denotes the set of $i\in[n]$ such that 
the boundary edge adjacent to $b_i$ is
directed towards the interior of $G$ in $\O$. 
For an internal vertex $v\in V_\inte$,
let $J(v,\O)\subset E(v)$ be the subset of edges adjacent to $v$
which are directed towards $v$ in the orientation $\O$.

We are now ready to describe the
{\it perfect orientation parametrization\/} of 
the positroid cells.

\begin{theorem}
Let $G$ be a perfectly orientable Grassmannian graph.
Let $\mu_G$ be the map defined on the direct product of the 
positive vertex-Grassmannians $Gr^{>0}(v)$
and written in terms of the Pl\"ucker coordinates as
$$
\begin{array}{l}
\displaystyle
\mu_G:\bigtimes_{v\in V_\inte} Gr^{>0}(v)\longrightarrow \P^{{[n]\choose k}-1}
\\[.2in]
\displaystyle
\mu_G: \bigtimes_{v\in V_\inte} (\Delta_J^{(v)})_{J\in {E(v)\choose h(v)}}
\longmapsto (\Delta_I)_{I\in {[n]\choose k}},
\end{array}
$$
where $\Delta_I$ is given by the sum over all 
perfect orientations $\O$ of the graph $G$
such that $I(\O)=I$: 
$$
\Delta_I = \sum_{I(\O)=I\  } 
\prod_{v\in V_\inte}  \Delta^{(v)}_{J(v,\O)}.
$$

{\rm (1)} The image of $\mu_G$ is exactly 
the positroid cell $\Pi_\M\subset Gr^{\geq 0}(k,n)\subset \P^{{[n]\choose k}-1}$,
where $\M=\M(G)$ is the positroid associated with $G$.

\smallskip

{\rm (2)}  The map $\mu_G$ is $T_\inte^{>0}$-invariant and $T_\bnd^{>0}$-equivariant,
that is, 
$\mu_G(t\cdot x) = \mu_G(x)$ for $t\in T_\inte^{>0}$, and
$\mu_G(t'\cdot x) = t'\cdot \mu_G(x)$ for $t'\in T_\bnd^{>0}$.

\smallskip

{\rm (3)}  
The map $\mu_G$  induces the birational 
subtraction-free bijection $\bar\mu_G$
$$
\bar \mu_G:
\left(\bigtimes_{v\in V_\inte} Gr^{>0}(v)\right)/T_\inte^{>0} \longrightarrow \Pi_G
$$
if and only if the Grassmannian graph  $G$ is a reduced.
\end{theorem}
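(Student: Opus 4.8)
The plan is to reduce the theorem to the known plabic-graph case from \cite{Pos2} and then propagate along refinements. First I would treat part (1): I want to show that the image of $\mu_G$ is exactly $\Pi_{\M(G)}$. The key observation is that $\Delta_I$, as defined by the sum over perfect orientations $\O$ with $I(\O)=I$, is manifestly a sum of monomials in positive quantities $\Delta^{(v)}_{J(v,\O)}$, so every $\Delta_I$ is $\geq 0$, and $\Delta_I>0$ precisely when there is at least one perfect orientation $\O$ with $I(\O)=I$, i.e.\ precisely when $I\in\M(G)$. This shows the image lies in $\overline{\Pi}_{\M(G)}$ once one checks that the vector $(\Delta_I)$ actually satisfies the Pl\"ucker relations; for this I would argue that gluing two little Grassmannians along an edge corresponds to the standard ``amalgamation'' operation, which on the level of Pl\"ucker coordinates is the substitution of a bilinear expression that preserves the Pl\"ucker ideal. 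The cleanest route is to verify the Pl\"ucker relations for the $(h,d)$-vertex gluing one edge at a time, noting that each internal edge contraction drops the ambient dimension by one and that the base case is trivial (a single vertex is literally $Gr(h,d)$). The fact that the image is all of $\Pi_{\M(G)}$ (not a proper subset) follows because $G$ refines to a plabic graph $G'$ with $\M(G')=\M(G)$, and for plabic graphs the boundary-measurement map is known \cite{Pos2, Talaska, PSW} to surject onto the positroid cell; I would check that $\mu_G$ factors through $\mu_{G'}$ up to the torus action introduced by the refinement.

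Next, part (2) is a direct computation. For $t=(t_e)\in T^{>0}_\inte$ supported on internal edges, each internal edge $e$ lies in $E(v)$ for exactly two internal vertices $v$; rescaling $\Delta^{(v)}_J$ by $\prod_{e\in J}t_e$ multiplies each monomial $\prod_v\Delta^{(v)}_{J(v,\O)}$ by $\prod_{e\in E_\inte}t_e^{m_e(\O)}$, where $m_e(\O)$ counts how many endpoints of $e$ have $e$ directed inward; but in any orientation an edge is directed inward at exactly one of its two endpoints and outward at the other, so $m_e(\O)=1$ for every internal edge and every orientation. Hence every monomial in every $\Delta_I$ picks up the same global factor $\prod_{e\in E_\inte}t_e$, which is absorbed by the projective rescaling. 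For $t'\in T^{>0}_\bnd$ the analogous count gives that a boundary edge at $b_i$ contributes a factor $t'_i$ to $\Delta_I$ exactly when $i\in I(\O)=I$, giving the $T^{>0}_\bnd$-equivariance $\mu_G(t'\cdot x)=t'\cdot\mu_G(x)$.

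For part (3), I would use Theorem~\ref{th:positroids_perm_neckl_moves} and the dimension count in Theorem~\ref{th:perf_orient_exists}(3). The domain $\bigl(\bigtimes_v Gr^{>0}(v)\bigr)/T^{>0}_\inte$ has dimension $\sum_{v\in V_\inte}\bigl(h(v)(\deg(v)-h(v))\bigr)-|E_\inte|$, while $\dim\Pi_G = k(n-k)$. Using Euler's formula for the disk and the helicity conservation law, one checks that these two dimensions agree precisely when $G$ is complete, and more generally that $\bar\mu_G$ is a dominant map onto $\Pi_G$ for every perfectly orientable $G$; injectivity, hence birationality, is the content of reducedness. For the ``only if'' direction, if $G$ is not reduced then either it has an extraneous vertex, a closed internal strand, or is refinement-equivalent to a graph with parallel or loop edges (Theorem~\ref{th:move_refinement_equivalence}(3)); in each case the parameter count exceeds $\dim\Pi_G$ or the map develops a positive-dimensional fiber, which I would exhibit by the standard local move (a loop edge or a bivalent vertex carries a free parameter that does not affect the boundary measurements). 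For the ``if'' direction, I would again reduce to plabic graphs: a reduced Grassmannian graph refines to a reduced plabic graph with the same decorated permutation (Theorem~\ref{th:move_refinement_equivalence}(1)), the refinement introduces exactly the extra torus directions that are quotiented out, and the birationality of $\bar\mu_{G'}$ for reduced plabic $G'$ is \cite{Pos2}.

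The main obstacle I expect is the surjectivity and injectivity bookkeeping under refinement: one must show that replacing a vertex of type $(h,d)$ by a complete reduced graph of type $(h,d)$ changes the map $\mu_G$ only by composing with the (birational, $T_\inte^{>0}$-equivariant) vertex map $\mu_{G_v}$ for that local piece, so that the quotient by $T^{>0}_\inte$ exactly cancels the new parameters. This is geometrically clear from the ``gluing'' picture but requires care to verify that the boundary-measurement sum over perfect orientations of the refined graph, restricted to orientations that glue compatibly, reproduces the original sum — essentially a distributivity argument matching perfect orientations of $G$ with pairs (perfect orientation of $G$ away from $v$, perfect orientation of the local piece at $v$). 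Once this compatibility is established, all three parts follow from the plabic-graph results of \cite{Pos2, PSW, Talaska} together with the elementary torus and dimension computations above.
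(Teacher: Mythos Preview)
Your high-level strategy --- reduce to the plabic case via refinement and then invoke \cite{Pos2} --- matches the paper exactly, and your treatment of part~(2) and of the refinement bookkeeping (the distributivity argument matching perfect orientations of $G'$ with compatible local orientations) is essentially the paper's composition $\bar\mu_{G}\circ \bigl(\bigtimes_v (\bar\mu_v)^{-1}\bigr)$.

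Where you diverge is in how you handle the plabic base case. You propose to verify the Pl\"ucker relations by an edge-by-edge amalgamation argument and then run a separate dimension count for part~(3). The paper instead makes a single observation that short-circuits both: fixing a reference perfect orientation $\O_0$, every other perfect orientation is obtained from $\O_0$ by reversing edges along a network flow, and this sets up a bijection between perfect orientations with $I(\O)=I$ and flows from $I$ in Talaska's formula. Hence the sum $\sum_{I(\O)=I}\prod_v \Delta^{(v)}_{J(v,\O)}$ is literally Talaska's expression for the Pl\"ucker coordinate $\Delta_I$ under the boundary measurement map of \cite{Pos2}, so the image automatically lies in the Grassmannian, equals $\Pi_{\M(G)}$, and is birational exactly when $G$ is reduced --- all by citation. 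Your amalgamation route would work, but it is more labor and you never need it once you see the flow--orientation bijection.

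One concrete error to fix: in your dimension count you write $\dim\Pi_G = k(n-k)$ and then say the dimensions agree ``precisely when $G$ is complete.'' That is not the right statement --- the theorem concerns arbitrary reduced $G$, for which $\dim\Pi_{\M(G)}$ is the number of faces of $G$ minus one, not $k(n-k)$. The correct claim is that the domain and target dimensions agree exactly when $G$ is reduced (complete or not); but since you ultimately defer birationality to \cite{Pos2} anyway, the dimension count is redundant and can simply be dropped.
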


\begin{remark}
The phrase ``birational subtraction-free bijection'' means
that both $\bar\mu_G$ and its inverse $(\bar\mu_G)^{-1}$ can be expressed 
in terms of the Pl\"ucker coordinates by rational (or even polynomial) 
expressions written without using the ``$-$'' sign.  
\end{remark}

\begin{proof}
Part (2) is straightforward from the definitions.
Let us first prove the remaining claims in 
the case when $G$ is a plabic graph.
In fact, in this case this construction gives exactly
the {\it boundary measurement parametrization\/} 
of $\Pi_G$ from \cite[Section 5]{Pos2}.
The Pl\"ucker coordinates for the boundary measurement parametrization
were given in \cite[Proposition~5.3]{Pos2} and expressed
by Talaska \cite[Theorem~1.1]{Talaska} in terms of network flows
on the graph $G$.  
The construction of the boundary measurement parametrization
(and Talaska's formula) depends on a choice 
of a reference perfect orientation $\O_0$.  
One observes that any other perfect orientation $\O$ of the plabic 
graph $G$ is obtained from $\O_0$ by 
reversing the edges along a network flow,
which gives a bijection between network flows and perfect orientations of $G$.
This shows that the above expression for $\Delta_I$ 
is equivalent to Talaska's formula, which proves the equivalence 
of the above perfect orientation parametrization and the boundary measurement
parametrization from \cite{Pos2}.  
Parts (1) and (3) now follow from results of \cite{Pos2}.

For an arbitrary Grassmannian graph $G'$, let $G$ be a plabic graph
that refines $G'$.  We already know that each 
``little plabic graph'' 
$G_v$, i.e., the subgraph of $G$ that refines a vertex $v$ of $G'$, 
parametrizes each positive vertex-Grassmannian $Gr^{>0}(v)$
by a birational subtraction-free bijection $\bar\mu_v:=\bar\mu_{G_v}$.
We also know the map $\bar\mu_G$ for the plabic graph $G$ parametrizes 
the cell $\Pi_{\M(G)}$ if $G$ reduced, or maps surjectively but not bijectively
onto $\Pi_{\M(G)}$ if $G$ is not reduced.
Then the map $\bar\mu_{G'}$ is given by the composition $\bar\mu_{G}\circ 
(\bigtimes_{v\in V_\inte} (\bar \mu_v)^{-1})$ and the needed result follows.
\end{proof}

The above construction can be thought of as 
a gluing of the ``big Grassmannian'' out of ``little
Grassmannians.'' This is similar to a tiling of a big geometric object (polytope)
by smaller pieces (smaller polytopes).  As we will see, this construction
literally corresponds to certain subdivisions of polytopes.

\section{Polyhedral subdivisions: Baues poset and fiber polytopes}

In this section we discuss Billera-Sturmfels' theory \cite{BS} of {\it fiber
polytopes,} the {\it generalized Baues problem\/} \cite{BKS},
and {\it flip-connectivity,} see also \cite{Rei, RS, ARS, Ath, AS} 
for more details.

\subsection{The Baues poset of $\pi$-induced subdivisions}

Let $\pi:P\to Q$ be an affine projection from one convex polytope $P$
to another convex polytope $Q=\pi(P)$.

Informally, a $\pi$-induced polyhedral subdivision is a collection of faces of
the polytope $P$ that projects to a polyhedral subdivision of the polytope $Q$.

Here is a rigorous definition, see \cite{BS}.  Let $A$ be the multiset of projections
$\pi(v)$ of vertices $v$ of $P$. Each element $\pi(v)$ of $A$ is labelled by
the vertex $v$. 
For $\sigma\subset A$, let $\conv(\sigma)$ denotes the convex hull of $\sigma$.
We say that $\sigma'\subset \sigma$ is a {\it face\/} of $\sigma$ if $\sigma'$
consists of all elements of $\sigma$ that belong to a face of the polytope
$\conv(\sigma)$.

A {\it $\pi$-induced subdivision\/} is a finite collection $S$ of 
subsets $\sigma\subset A$, called {\it cells}, such that
\begin{enumerate}
\item Each $\sigma\in S$ is the projection under $\pi$ of the 
vertex set of a face of $P$.
\item For each $\sigma\in S$, $\dim(\conv(\sigma)) = \dim(Q)$.
\item For any $\sigma_1,\sigma_2\in S$, 
$\conv(\sigma_1)\cap \conv(\sigma_2) = \conv(\sigma_1\cap \sigma_2)$.
\item  For any $\sigma_1,\sigma_2\in S$, $\sigma_1\cap \sigma_2$ 
is either empty or a face of both $\sigma_1$ and $\sigma_2$.
\item $\bigcup_{\sigma\in S} \conv(\sigma) = Q$.
\end{enumerate}

The {\it Baues poset\/} $\omega(P\overset\pi\to Q)$ is the poset
of all $\pi$-induced subdivisions partially ordered by {\it refinement,}
namely, $S\leq T$ means that, for every cell $\sigma\in S$,
there exists a cell $\tau \in T$ such that $\sigma \subset \tau$.
This poset has a unique maximal element $\hat 1$,
called the {\it trivial subdivision}, that consists of 
a single cell $\sigma=A$.  All other elements are called {\it proper\/} 
subdivisions.  Let $\hat\omega(P\overset\pi\to Q)):= 
\omega(P\overset\pi\to Q)-\hat 1$ be 
the poset of proper $\pi$-induced subdivisions
obtained by removing the maximal element $\hat 1$.
The minimal elements of the Baues poset are called {\it tight\/} 
$\pi$-induced subdivisions.

Among all $\pi$-induced subdivisons, there is a subset of {\it coherent\/}
subdivisions that come from linear {\it height functions\/} $h:P\to \R$ as
follows.  For each $q\in Q$, let $\bar F_q$ be the face of the fiber
$\pi^{-1}(q)\cap P$ where the height function $h$ reaches its maximal value.  The face
$\bar F_q$ lies in the relative interior of some face $F_q$ of $P$.  The
collection of faces $\{F_q\}_{q\in Q}$ projects to a 
$\pi$-induced subdivision of $Q$.  
Let $\omega_\coh(P\overset\pi\to Q)\subseteq\omega(P\overset\pi\to Q)$ be the
subposet of the Baues poset formed by the coherent $\pi$-induced subdivisions.
This coherent part of the Baues poset is isomorphic to the face
lattice of the convex polytope 
$\Sigma(P\overset\pi\to Q)$, called the {\it fiber polytope,}
defined as the Minkowskii integral of fibers of $\pi$
(the limit Minkowskii sums):
$$
\Sigma(P\overset\pi\to Q)
:=\int_{q\in Q} (\pi^{-1}(q)\cap P) \, dq
$$
In general, the whole Baues poset $\omega(P\overset\pi\to Q)$
may not be polytopal.

\subsection{The generalized Baues problem and flip-connectivity}

For a finite poset $\omega$, the {\it order complex\/} $\Delta\omega$
is the simplicial complex of all chains in $\omega$.
The ``topology of a poset $\omega$'' means the topology of the simplicial
complex $\Delta\omega$.
For example, if $\omega$ is the face poset of a regular cell complex 
$\Delta$, then $\Delta\omega$ is the barycentric subdivision 
of the cell complex $\Delta$; 
and, in particular, $\Delta\omega$ is homeomorphic to $\Delta$.

Clearly, the subposet $\hat \omega_\mathrm{coh}(P\overset\pi\to Q)$ 
of proper coherent $\pi$-induced subdivisions homotopy equivalent to a
$(\dim(P) - \dim(Q) -1)$-sphere, because it is the face lattice
of a convex polytope
of dimension $\dim(P)-\dim(Q)$, 
namely, the fiber polytope $\Sigma(P\overset\pi\to Q)$.

The {\it generalized Baues problem\/} (GBP) posed by Billera, Kapranov,
Sturmfels \cite{BKS} asks whether the same is true about the poset 
of all proper $\pi$-induced subdivisions.
Is it true that $\hat \omega(P\overset\pi\to Q)$ is 
homotopy equivalent to a $(\dim(P) - \dim(Q) -1)$-sphere?
In general, the GBP is a hard question.  Examples of Baues posets with 
disconnected topology were constructed by Rambau and Ziegler \cite{RZ} and more 
recently by Liu \cite{Liu}.  There are, however, several general classes
of projections of polytopes, where the GBP has an affirmative answer,
see the next section.  

Another related question is about connectivity by flips.
For a projection $\pi:P\to Q$, the {\it flip graph\/}
is the restriction of the Hasse
diagram of the Baues poset $\omega(P\overset\pi\to Q)$ to elements
of rank $0$ (tight subdivisions) and rank $1$ (subdivisions that cover
a tight subdivision).  The elements of rank 1 in the flip graph 
are called {\it flips.}  The flip-connectivity problem asks 
whether the flip graph is connected.
The coherent part of the flip graph is obviously connected, because
it is the $1$-skeleton of the fiber polytope $\Sigma(P\overset\pi\to Q)$.

The GBP and the flip-connectivity problem are related to each other, 
but, strictly speaking, neither of them implies the other,
see \cite[Section~3]{Rei} for more details.

\subsection{Triangulations and zonotopal tilings} 

There are two cases of the above general setting that 
attracted a special attention in the literature.

The first case is when the polytope $P$ is the $(n-1)$-dimensional {\it simplex\/} 
$\Delta^{n-1}$.  The multiset $A$ of projections of vertices of the simplex
can be an arbitrary multiset of $n$ points, and $Q=\conv(A)$ can be an 
arbitrary convex polytope.
In this case, the Baues poset $\omega(\Delta^{n-1}\overset\pi\to Q)$ is the 
poset of all polyhedral subdivisions of $Q$ (with vertices at $A$);
tight $\pi$-induced subdivisions are {\it triangulations\/} of $Q$;
and the fiber polytope $\Sigma(\Delta^{n-1}\overset\pi\to Q)$ is exactly
is the {\it secondary polytope\/} of Gelfand-Kapranov-Zelevinsky \cite{GKZ},
which appeared in the study of discriminants.

In particular, for a projection 
of the simplex $\Delta^{n-1}$ to an $n$-gon $Q$, $\pi$-induced subdivisions are exactly 
the subdivisions of the $n$-gon by noncrossing chords.  All of them are coherent.
Tight subdivisions are triangulations of the $n$-gon.
There are the Catalan number
$C_{n-2} = {1\over n-1 } {2n-4\choose n-2}$ of triangulations of the $n$-gon.
The fiber polytope (or the secondary polytope)
in this case is the Stasheff {\it associahedron.}

Another special case is related to projections $\pi:P\to Q$
of the {\it hypercube\/} $P=\cube_n:=[0,1]^n$.  The projections $Q=\pi(\cube_n)$ 
of the hypercube form a special class of polytopes, called {\it zonotopes.}
In this case, $\pi$-induced subdivisions are {\it zonotopal tilings\/} of 
zonotopes $Q$.  According to Bohne-Dress theorem \cite{Boh}, 
zonotopal tilings of $Q$ are in bijection with {\it 1-element extensions\/} of 
the oriented matroid associated with the zonotope $Q$.

For a projection of the $n$-hypercube $\cube_n$ to a $1$-dimensional line segment,
the fiber polytope is the {\it permutohedron.}
For a projection $\pi:\cube_n\to Q$ of the $n$-hypercube $\cube_n$ to a $2n$-gon $Q$,
{\it fine zonotopal tilings} (i.e., tight $\pi$-induced subdivisions)
are known as {\it rhomus tilings\/} of the $2n$-gon.  They correspond to
commutation classes of {\it reduced decompositions\/} of the longest permutation
$w_\circ\in S_n$.

\section{Cyclic polytopes and cyclic zonotopes}

Fix two integers $n$ and $0\leq d\leq n-1$.

\begin{definition}
A {\it cyclic projection\/} is a linear map 
$$
\pi:\R^n\to \R^{d+1},\quad  \pi:x\mapsto Mx
$$ 
given by a $(d+1)\times n$ matrix 
$M=(u_1,\dots,u_n)$ (the $u_i$ are the column vectors)
with all positive maximal $(d+1)\times (d+1)$ minors and such that 
$f(u_1)=\cdots=f(u_n)=1$ for some linear form $f:\R^{d+1}\to\R$.
In other words, $M$ represents a point of
the positive Grassmannian $Gr^{>0}(d+1,n)$ with columns $u_i$ rescaled
so that they all lie on the same affine hyperplane $H_1=\{y\in\R^{d+1}\mid f(y)=1\}$.
\end{definition}

The {\it cyclic polytope\/} is 
the image under a cyclic projection $\pi$ of the standard $(n-1)$-dimensional
{\it simplex\/}
$\Delta^{n-1}:=\conv(e_1,\dots,e_n)$
$$
C(n,d) := \pi(\Delta^{n-1}) \subset H_1.
$$
The {\it cyclic zonotope\/} is the image of the 
standard {\it $n$-hypercube\/} $\cube_n:=[0,1]^n\subset\R^n$ 
$$
Z(n,d+1):=\pi(\cube_n)\subset \R^{d+1}.
$$
Remark that, for each $n$ and $d$, there are many 
combinatorially (but not linearly) isomorphic cyclic polytopes $C(n,d)$
and cyclic zonotopes $Z(n,d+1)$ that depend on a choice of the cyclic projection $\pi$.
Clearly, $C(n,d)=Z(n,d+1)\cap H_1$.

Ziegler \cite{Ziegler}  identified {\it fine zonotopal tilings\/}
of the cyclic zonotope $Z(n,d+1)$, i.e., 
the minimal elements of the Baues poset 
$\omega(\cube_n\overset\pi\to Z(n,d+1))$, 
with elements of Manin-Shekhtman's {\it higher
Bruhat order\/} \cite{MS}, also studied by Voevodsky and Kapranov \cite{VK}.  
According to results of Sturmfels and Ziegler \cite{SZ}, Ziegler \cite{Ziegler},
Rambau \cite{Ramb}, and Rambau and Santos \cite{RS}, the GBP and flip-connectivity have 
affirmative answers in these cases.

\begin{theorem} 
{\rm (1)} \cite{SZ}
For  $\pi:\cube_n \to Z(n,d+1)$, the poset of proper zonotopal 
tilings of the cyclic zonotope $Z(n,d+1)$ is homotopy equivalent
to an $(n-d-2)$-dimensional sphere.
The set of fine zonotopal tilings of $Z(n,d+1)$ is connected by flips.

\smallskip

{\rm (2)} \cite{RS} For $\pi:\Delta^{n-1}\to C(n,d)$,
the poset of proper subdivisions of the cyclic polytope $C(n,d)$
is homotopy equivalent to an $(n-d-2)$-dimensional sphere.
\cite{Ramb} The set of triangulations of the cyclic polytope 
$C(n,d)$ is connected by flips.

\end{theorem}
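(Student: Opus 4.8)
Both parts are established in the cited works \cite{SZ, Ramb, RS}; we indicate the common strategy. In each case one argues by induction on $n$ using a \emph{deletion} operation on the ground set $[n]$, combined with the Quillen fiber lemma for the topological claim and an explicit ``pushing'' argument for flip-connectivity. The base case is $d=n-1$ in both settings: then $Z(n,d+1)$ is (the image of) an invertible projection and $C(n,d)$ is a simplex, so there are no proper subdivisions, and the empty poset is the $(-1)$-sphere $S^{n-d-2}$, as required.

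For part (1), first invoke Ziegler's theorem \cite{Ziegler}, identifying the Baues poset $\omega(\cube_n\overset\pi\to Z(n,d+1))$ with the extended higher Bruhat order $B(n,d)$ of Manin--Shekhtman \cite{MS}, under which fine zonotopal tilings correspond to the maximal chains of $B(n,d-1)$. Deleting the last generator $u_n$ gives an order-preserving surjection $\delta$ from $\omega(\cube_n\overset\pi\to Z(n,d+1))$ onto $\omega(\cube_{n-1}\overset\pi\to Z(n-1,d+1))$. The crucial input of Sturmfels--Ziegler \cite{SZ} is that, for each proper tiling $T$ of the base, the preimage under $\delta$ of the principal order ideal below $T$ is again (isomorphic to) the Baues poset of a cyclic zonotope with smaller codimension parameter, hence by induction is contractible or a sphere of the predicted dimension; the Quillen fiber lemma then assembles these pieces into the homotopy type $S^{n-d-2}$. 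Flip-connectivity follows from the same setup: a flip of a fine tiling either lies inside one fiber (handled inductively) or projects to a flip of the base, so any two fine tilings are joined by first equalizing their $\delta$-images (induction on the base) and then moving within a common fiber.

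For part (2), the deletion removes the last vertex $v_n$ of $C(n,d)$. Following Rambau--Santos \cite{RS}, partition the proper subdivisions of $C(n,d)$ into those not using $v_n$ --- which are exactly the proper subdivisions of $C(n-1,d)$ --- and those using $v_n$, which are governed, via the star/vertex figure of $v_n$, by the subdivisions of the lower cyclic polytope $C(n-1,d-1)$. This exhibits $\hat\omega(\Delta^{n-1}\overset\pi\to C(n,d))$ as a union of two subposets whose order complexes, and whose intersection, have homotopy types controlled by the inductive hypothesis; a nerve/Mayer--Vietoris computation (two balls glued along a sphere of the right dimension) yields $S^{n-d-2}$. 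For Rambau's flip-connectivity statement \cite{Ramb}, instead use the higher Stasheff--Tamari orders $\mathcal{HST}_1(n,d)$ and $\mathcal{HST}_2(n,d)$ on triangulations of $C(n,d)$: from the explicit description of such triangulations by their upper facets one shows every triangulation admits an increasing flip toward the unique minimum unless it is already minimal, and since each increasing flip is an edge of the flip graph, the flip graph is connected.

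The technical heart --- and the main obstacle --- in both parts is the precise identification of the fibers of the deletion map (respectively the star of the deleted vertex) with lower-dimensional instances of the same family. This rests entirely on the Gale-evenness combinatorics special to cyclic polytopes and zonotopes: the new vector $u_n$ (respectively vertex $v_n$) meets any existing tiling in a ``locally cyclic'' pattern, so that the admissible local completions again form the Baues poset of $Z(m,d)$ or $C(m,d-1)$ for a suitable $m$. Verifying this, and checking that the relevant order ideals in the fibers are contractible so that the Quillen fiber lemma applies, is where the real work lies; the outer induction and the homotopy bookkeeping are then routine. A useful consistency check is the relation $C(n,d)=Z(n,d+1)\cap H_1$, which lets part of the cyclic-polytope statement be read off from the cyclic-zonotope one by restricting zonotopal tilings to a generic affine slice.
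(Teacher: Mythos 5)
This theorem is stated in the paper purely as a citation of known results --- the paper gives no proof of its own, deferring entirely to \cite{SZ}, \cite{RS}, and \cite{Ramb} --- so there is no internal argument to compare against. Judged on its own terms, your sketch is a reasonable survey-level roadmap, and you are candid that the ``technical heart'' (identifying the fibers of the deletion map and verifying the contractibility needed for the Quillen fiber lemma) is exactly where the real work lies; as written, that is an acknowledged gap rather than a proof. Your account of part (1) is broadly faithful in spirit: the route does go through the Bohne--Dress correspondence \cite{Boh} between zonotopal tilings and one-element liftings/extensions, Ziegler's identification with the higher Bruhat orders \cite{Ziegler, MS}, and the Sturmfels--Ziegler analysis of extension spaces of (strongly Euclidean) cyclic oriented matroids. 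One caveat: the fibers of the deletion map are not themselves Baues posets of smaller cyclic zonotopes; what \cite{SZ} actually establishes is their contractibility via a Euclidean/sweep argument, which is a weaker but sufficient statement.

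Your description of part (2) diverges more substantially from what \cite{RS} actually does. Rambau--Santos do not run a star-of-the-last-vertex Mayer--Vietoris induction on the Baues poset of $C(n,d)$; their key theorem is that \emph{every} subdivision of a cyclic polytope is a lifting subdivision, which identifies $\hat\omega(\Delta^{n-1}\overset\pi\to C(n,d))$ with an extension poset of the dual oriented matroid and thereby reduces the sphericity claim to the zonotopal/extension-space result of \cite{SZ}. (This is precisely the $k=1$ instance of the lifting phenomenon the present paper discusses via $\omega_{\lift}(k,n,d)$, so the two parts of the theorem are not independent in the way your sketch suggests.) Your account of Rambau's flip-connectivity via the higher Stasheff--Tamari orders and increasing flips toward the minimal triangulation is essentially correct. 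In summary: no contradiction with the paper, but the proposal is an outline that both defers the decisive technical lemmas and, for part (2), attributes to \cite{RS} a decomposition argument they do not use.
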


\section{Cyclic projections of the hypersimplex}

Fix three integers $0\leq k\leq n$ and $0\leq d\leq n-1$.
The {\it hypersimplex\/} 
$\Delta_{kn}:=\conv\left\{e_I\mid I\in{[n]\choose k}\right\}$
is the {\it $k$-th section\/} of the $n$-hypercube $\cube_n\subset\R^n$
$$
\Delta_{kn}
=\cube_n\cap \{x_1+\cdots+x_n=k\}.
$$
Let $\pi:\R^n\to\R^{d+1}$ be a cyclic projection as above.
Define the polytope
$$
Q(k,n,d):=\pi(\Delta_{kn}) = Z(n,d+1)\cap H_k,
$$
where $H_k$ is the affine hyperplane $H_k:=\{y\in\R^{d+1}\mid f(y)=k\}$.
Clearly, for $k=1$, the polytope $Q(1,n,d)$ is the cyclic polytope
$C(n,d)$.

Let $\omega(k,n,d)$ be the Baues poset of $\pi$-induced subdivisions for 
a cyclic projection $\pi:\Delta_{kn}\to Q(k,n,d)$:
$$
\omega(k,n,d):=\omega(\Delta_{kn}\overset\pi\to Q(k,n,d)).
$$
Let $\omega_\coh^\pi(k,n,d):=\omega_\coh(\Delta_{kn}\overset\pi\to Q(k,n,d))
\subseteq \omega(k,n,d)$
be its coherent part.
Note that the coherent part $\omega_\coh^\pi(k,n,d)$ depends on a choice
of the cyclic projection $\pi$, but the whole poset $\omega(k,n,d)$ is independent
of any choices.
The coherent part 
$\omega_\coh^\pi(k,n,d)$ may not be equal $\omega(k,n,d)$.
For example they are not equal for $(k,n,d)=(3,6,2)$.

The poset $\omega(k,n,d)$ is a generalization of the Baues poset 
of subdivisions of the cyclic polytope $C(n,d)$,
and is related to the Baues poset of zonotopal 
tilings of the cyclic zonotope $Z(n,d+1)$ in an obvious manner.
For $k=1$, $\omega(1,n,d)=\omega(\Delta^{n-1}\overset\pi\to C(n,d))$.  
For any $k$, there is the order preserving 
{\it $k$-th section map}
$$
\mathrm{Section}_k:  \omega(\cube_n\overset\pi\to Z(n,d+1))\to \omega(k,n,d)
$$
that send a zonotopal tiling of $Z(n,d+1)$ to its section by the hyperplane $H_k$.

Let $\omega_{\lift}(k,n,d) \subseteq \omega(k,n,d)$ be the image of the
map $\mathrm{Section}_k$. 
We call the elements of $\omega_\lift(k,n,d)$ the {\it lifting\/} 
$\pi$-induced subdivisions.
They form  the subset of $\pi$-induced subdivisions from $\omega(k,n,d)$ 
that can be lifted to a zonotopal
tiling of the cyclic zonotope $Z(n,d+1)$.  
Clearly, we have
$$
\omega_\coh^\pi(k,n,d) \subseteq \omega_\lift(k,n,d)\subseteq \omega(k,n,d).
$$

The equality of the sets of minimal elements of $\omega(k,n,d)$ and
$\omega_\lift(k,n,d)$ was proved
in the case $k=1$ by Rambau and Santos \cite{RS}, 
who showed that all triangulations of the cyclic polytope 
$C(n,d)$ are lifting triangulations. 
For $d=2$, the equality follows from the result of Galashin \cite{Gal} 
(Theorem~\ref{th:galashin_cubes} below) 
about plabic graphs, as we will explain in the next section.

\begin{theorem}  
The minimal elements of the posets $\omega_{\lift}(k,n,d)$ 
and $\omega(k,n,d)$ are the same in the following cases:
{\rm (1)} 
$k=1$ and any $n, d$;
{\rm (2)}
$d=2$ and any $k, n$.
\end{theorem}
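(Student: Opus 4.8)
The plan is to handle the two cases separately, since they rely on genuinely different inputs. For case (1), $k=1$, the statement is that every triangulation of the cyclic polytope $C(n,d)$ is a lifting triangulation, which is exactly the theorem of Rambau and Santos \cite{RS} quoted in the previous section; so here there is nothing to prove beyond observing that $\omega(1,n,d) = \omega(\Delta^{n-1}\overset\pi\to C(n,d))$ and that a triangulation of $C(n,d)$ is a minimal element of $\omega(1,n,d)$, both of which are immediate from the setup. The real content is case (2), $d=2$.

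For case (2) I would first identify the minimal elements of $\omega(k,n,2)$ with tight $\pi$-induced subdivisions of the two-dimensional polygon $Q(k,n,2)$. The key structural fact — to be established in the next section of the paper, and which I am allowed to assume only insofar as it is phrased earlier, so in practice I would cite Galashin's theorem \cite{Gal} (Theorem~\ref{th:galashin_cubes}) together with the bijection of Theorem~\ref{th:positroids_perm_neckl_moves} — is that tight $\pi$-induced subdivisions of $Q(k,n,2)$ are precisely the subdivisions coming from reduced plabic graphs of helicity $k$, equivalently from fine sections of zonotopal tilings of the cyclic zonotope $Z(n,3)$. Concretely, the steps are: (a) show a tight $\pi$-induced subdivision $S\in\omega(k,n,2)$ corresponds to a plabic graph $G$ with $h(G)=k$ via the 2-dimensional projection correspondence; (b) invoke Galashin's result that $G$ arises as a section of a zonotopal tiling $\mathcal{T}$ of $Z(n,3)$, i.e.\ $G = \mathrm{Section}_k(\mathcal{T})$ up to the plabic-graph/subdivision dictionary; (c) conclude $S\in \omega_{\lift}(k,n,2)$. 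Since the reverse inclusion $\omega_{\lift}(k,n,2)\subseteq\omega(k,n,2)$ is automatic from the definition of the section map, and since sections of zonotopal tilings are automatically $\pi$-induced, the minimal elements coincide.

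The main obstacle is step (b): making precise, and justifying, that \emph{every} tight $\pi$-induced subdivision of $Q(k,n,2)$ — not merely the coherent ones — lifts to a zonotopal tiling of $Z(n,3)$. The coherent case is free because $\omega^\pi_{\coh}(k,n,2)\subseteq\omega_{\lift}(k,n,2)$, but a tight subdivision need not be coherent, and the whole point of the theorem is to cover the non-coherent ones. This is exactly where Galashin's identification of plabic graphs with sections of zonotopal tilings does the heavy lifting: it supplies a lift for \emph{every} reduced plabic graph, coherent or not, by exhibiting an explicit zonotopal tiling of $Z(n,3)$ (built from the wiring-diagram / membrane picture of $G$) whose section by $H_k$ recovers $G$. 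I would therefore organize the argument so that the only nontrivial appeal is to Theorem~\ref{th:galashin_cubes}, with the remaining work being the routine dictionary between (i) 2-dimensional cyclic projections of $\Delta_{kn}$ and their $\pi$-induced subdivisions, (ii) reduced Grassmannian/plabic graphs of helicity $k$, and (iii) sections of zonotopal tilings of $Z(n,3)$ — a dictionary whose three-way compatibility is asserted by the combination of Theorems~\ref{th:positroids_perm_neckl_moves} and~\ref{th:galashin_cubes}. One should also check the degenerate boundary cases $k=0$, $k=n$, and small $n$ separately, where $Q(k,n,2)$ may be lower-dimensional or a single point and the posets are trivial.
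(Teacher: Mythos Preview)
Your proposal is correct and follows essentially the same route as the paper: case~(1) is attributed to Rambau--Santos \cite{RS}, and case~(2) is derived from Galashin's Theorem~\ref{th:galashin_cubes} via the dictionary between tight $\pi$-induced subdivisions in $\omega(k,n,2)$ and complete reduced plabic graphs of type $(k,n)$. The only minor correction is that the dictionary you need in step~(a) is not Theorem~\ref{th:positroids_perm_neckl_moves} (which concerns positroids, decorated permutations, and Grassmann necklaces) but rather the isomorphism between $\omega(k,n,2)$ and the refinement poset of complete reduced Grassmannian graphs, stated and proved in the section on cyclic projections of the hypersimplex; with that citation adjusted, your argument matches the paper's.
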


Flip-connectivity \cite{SZ, Ziegler} of zonotopal tilings
of $Z(n,d+1)$ easily implies the following claim.

\begin{lemma}
The minimal elements of $\omega_\lift(k,n,d)$ are connected by flips.
\end{lemma}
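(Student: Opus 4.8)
\emph{Proposal.} The plan is to transport the flip-connectivity of fine zonotopal tilings of the cyclic zonotope $Z(n,d+1)$ (part~(1) of the preceding Theorem, due to Sturmfels--Ziegler and Ziegler) along the order-preserving $k$-th section map $\mathrm{Section}_k\colon\omega(\cube_n\overset\pi\to Z(n,d+1))\to\omega(k,n,d)$, whose image is $\omega_\lift(k,n,d)$. The first step is to observe that every minimal element of $\omega_\lift(k,n,d)$ is the section of a \emph{fine} zonotopal tiling: if $S$ is minimal and $S=\mathrm{Section}_k(Z)$ for some zonotopal tiling $Z$, then refining $Z$ to a fine tiling $Z'$ gives $\mathrm{Section}_k(Z')\le S$ with $\mathrm{Section}_k(Z')\in\omega_\lift(k,n,d)$, hence $\mathrm{Section}_k(Z')=S$ by minimality. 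So all minimal elements of $\omega_\lift(k,n,d)$ arise as $\mathrm{Section}_k(Z')$ with $Z'$ fine.

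Given two minimal elements $S_0,S_1$ of $\omega_\lift(k,n,d)$, choose fine zonotopal tilings $Z_0,Z_1$ with $\mathrm{Section}_k(Z_i)=S_i$, and pick a flip path $Z_0=T_0,T_1,\dots,T_m=Z_1$ among fine tilings of $Z(n,d+1)$, where for each $j$ there is a rank-one zonotopal subdivision $U_j$ covering both $T_j$ and $T_{j+1}$. Applying $\mathrm{Section}_k$ gives a chain $S_0=\mathrm{Section}_k(T_0),\dots,\mathrm{Section}_k(T_m)=S_1$ in $\omega_\lift(k,n,d)$, with $\mathrm{Section}_k(T_j)\le V_j$ and $\mathrm{Section}_k(T_{j+1})\le V_j$ for $V_j:=\mathrm{Section}_k(U_j)\in\omega_\lift(k,n,d)$. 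The proof then reduces to two local facts about how $\mathrm{Section}_k$ behaves near a single zonotopal flip: (a) for every fine tiling $T$ of $Z(n,d+1)$ the section $\mathrm{Section}_k(T)$ is already a minimal element of $\omega_\lift(k,n,d)$ (so that no passage to refinements below it is needed), and (b) for a flip $T_j\leftrightarrow T_{j+1}$, the sections $\mathrm{Section}_k(T_j)$ and $\mathrm{Section}_k(T_{j+1})$ are either equal or differ by a single flip in $\omega_\lift(k,n,d)$, with $V_j$ (or a rank-one element below it) witnessing the flip. Granting (a) and (b), concatenating over $j$ and deleting repetitions produces a flip path from $S_0$ to $S_1$ inside $\omega_\lift(k,n,d)$, which is the assertion.

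The main obstacle is establishing (a) and (b), i.e.\ controlling the section by $H_k$ of the ``mutation region'' of a zonotopal flip. That region is a sub-zonotope combinatorially equal to $Z(d+2,d+1)$, generated by $d+2$ of the columns $u_{i_1},\dots,u_{i_{d+2}}$; since $f(u_i)=1$, its intersection with $H_k$ is a polytope of the same type as $Q(k',d+2,d)$ for the relevant shifted level $k'$, and $Z(d+2,d+1)$ has exactly two fine zonotopal tilings. One must check that these two tilings restrict to the two tight subdivisions there (or to the same subdivision when $k'$ is extremal), that sectioning creates no spurious coarsenings inside the mutation region (this is what makes $\mathrm{Section}_k$ faithful enough for (a) and (b)), and that the part of the tiling outside the mutation region -- which is unchanged by the flip -- contributes nothing. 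This local analysis is the only nontrivial point; everything else is the routine concatenation above. Alternatively, one could run the same argument by induction on $n-d-2$, with the base case $n=d+2$ being immediate since then $\omega_\lift(k,d+2,d)$ has at most two minimal elements.
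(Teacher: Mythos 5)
Your proposal is correct and follows essentially the same route as the paper: the paper derives the lemma directly from flip-connectivity of fine zonotopal tilings of $Z(n,d+1)$ via the observation that for two fine tilings $T,T'$ connected by a flip, $\mathrm{Section}_k(T)$ and $\mathrm{Section}_k(T')$ are either equal or connected by a flip. Your points (a) and (b) simply make explicit the local verifications that the paper leaves as an easy remark.
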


Indeed, for any pair of fine zonotopal tilings $T$ and $T'$ of $Z(n,d+1)$ 
connected by a flip, their $k$-sections $\mathrm{Section}_k(T)$ and 
$\mathrm{Section}_k(T')$ are either equal to each other or connected by a flip.

The Baues posets of the form $\omega(k,n,d)$ are good candidates 
for a general class of projections of polytopes 
where the GBP and flip-connectivity problem might have affirmative answers.

\begin{problem}
Is the poset $\omega(k,n,d)-\hat 1$ homotopy equivalent of a sphere?  
Can its minimal elements be connected by flips?
Is it true that $\omega_{\lift}(k,n,d)=\omega(k,n,d)$?
\end{problem}

\begin{example}
For $d=1$, the Baues poset $\omega(k,n,1)$ is already interesting.
Its minimal elements correspond to {\it monotone paths\/} on the hypersimplex
$\Delta_{kn}$, which are increasing paths that go along the edges of the 
hypersimplex $\Delta_{kn}$.  
Such paths are the subject of the original (non-generalized) Baues problem \cite{B},
which was proved by Billera, Kapranov, Sturmfels \cite{BKS}
(for any $1$-dimensional projection of a polytope).
More specifically, monotone paths on $\Delta_{kn}$ correspond to directed paths 
from $[1,k]$ to $[n-k+1,n]$ in the directed graph
on ${[n]\choose k}$ with edges $I\to J$ if $J=(I\setminus\{i\})\cup \{j\}$ for $i<j$.

It is not hard to see that, for $k=1,n-1$, there are $2^{n-2}$ monotone paths,
and the posets $\omega(1,n,1)$ and $\omega(n-1,n,1)$ are isomporphic to 
the Boolean lattice $B_{n-2}$, i.e., the face poset of the hypercube $\cube_{n-2}$.  
For $n=2,3,4,5$, the Baues poset $\omega(2,n,1)$ has $1,2,10,62$ minimal elements.

Monotone paths on $\Delta_{kn}$ might have different lengths.  
The {\it longest\/} monotone paths are in an easy bijection with {\it standard
Young tableaux\/} of the rectangular shape $k\times (n-k)$.  
By the hook-length formula, their number is 
$(k(n-k))! \prod_{i=0}^{n-k-1}{i!\over (k+i)!}$.

Note, however, $\omega(k,n,d)\ne \omega_\lift(k,n,d)$ for $(k,n,d)=(2,5,1)$.
Indeed, Galashin pointed out that the monotone path 
$\{1,2\}\to\{1,3\}\to\{1,4\}\to\{2,4\}\to\{3,4\}$ {\it cannot\/} be lifted
to a rhombus tiling of the the $2n$-gon $Z(n,2)$, because it is not 
weakly separated.
\end{example}

\section{Grassmannian graphs as duals of polyhedral subdivisions induced by 
projections of hypersimplices}

Let us now discuss the connection between the positive
Grassmannian and combinatorics of polyhedral subdivisions.  
In fact, the positive Grassmannian
is directly related to the setup of the previous section for $d=2$.

\begin{theorem}
The poset of complete reduced Grassmannian graphs of type $(k,n)$ ordered 
by refinement is canonically isomorphic to the Baues poset $\omega(k,n,2)$
of $\pi$-induced subdivisions for a 2-dimensional cyclic projection $\pi$
of the hypersimplex $\Delta_{kn}$.
Under this isomorphism, plabic graphs correspond to tight $\pi$-induced
subdivisions and moves of plabic graphs correspond to flips between 
tight $\pi$-induced subdivisions.
\end{theorem}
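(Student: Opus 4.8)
The plan is to construct the isomorphism directly and then verify it respects the poset structure. First I would fix a cyclic projection $\pi:\Delta_{kn}\to Q(k,n,2)$ and observe that the image $Q=Q(k,n,2)$ is a convex polygon in $\R^2$, so a $\pi$-induced subdivision $S$ of $Q$ is a decomposition of a polygon into polygonal cells, each of which is the $\pi$-image of the vertex set of a face of $\Delta_{kn}$. The faces of the hypersimplex are matroid polytopes of a very restricted kind (intersections of $\Delta_{kn}$ with coordinate hyperplanes $x_i=0$ and $x_i=1$), so each cell $\sigma\in S$ corresponds to a little hypersimplex $\Delta_{h_\sigma,d_\sigma}$ sitting on a set of $d_\sigma$ ``active'' coordinates. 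The key combinatorial point is that the dual graph of $S$ — place a vertex in each cell $\sigma$ with helicity $h(v_\sigma)=h_\sigma$ and degree $\deg(v_\sigma)=d_\sigma$, connect cells sharing an edge, and run edges out to the boundary of $Q$ across the boundary edges of the subdivision — is exactly a complete reduced Grassmannian graph of type $(k,n)$. Here the boundary circle of the disk is identified with $\partial Q$, the $n$ boundary vertices with the $n$ ``walls'' of $Q$ coming from the projection of the hyperplanes $x_i=\text{const}$, and the clockwise cyclic order on $[n]$ matches the positivity of the maximal minors of $M$ (this is where the cyclic projection hypothesis is used). I would check that the helicity bookkeeping $h(G)-n/2=\sum_v(h(v)-\deg(v)/2)$ is precisely the ``volume'' identity $\dim Q + \text{const} = \sum_\sigma(\text{local data})$, i.e. the Helicity Conservation Law is the combinatorial shadow of the fact that the cells tile $Q$; and that the face-count condition in Theorem~\ref{th:perf_orient_exists}(3) characterizing complete reduced graphs matches condition (2) in the definition of $\pi$-induced subdivision ($\dim\conv(\sigma)=\dim Q$ for every cell).

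Next I would exhibit the inverse map: given a complete reduced Grassmannian graph $G$ of type $(k,n)$, use its strands and face labelling (Definition~\ref{def:face_label}) to produce a subdivision. By Theorem~\ref{th:WS=plabic} the face labels form a weakly separated collection in ${[n]\choose k}$; I would show that the points $e_{I_F}$ for faces $F$ of $G$, together with the incidence pattern of faces, assemble into a $\pi$-induced subdivision of $Q(k,n,2)$ — each internal vertex $v$ of type $(h,d)$ giving a cell which is the $\pi$-image of a face of $\Delta_{kn}$ isomorphic to $\Delta_{h,d}$, namely the matroid polytope of the uniform matroid on the $d$ edges at $v$. The nontrivial verification is condition (3)/(4) of the subdivision definition (cells meet along common faces), which should follow from reducedness of $G$: the ``no bad double crossings'' condition on strands is exactly what prevents two cells from overlapping improperly, and ``no closed loops'' guarantees every cell is bounded and genuinely 2-dimensional. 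That the two constructions are mutually inverse is then a matter of tracking strand directions versus the edges of the polygonal cells.

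For the poset statement, I would note that both orders are generated by the same local move: on the graph side, refining replaces an internal vertex of type $(h,d)$ by a complete reduced Grassmannian graph of type $(h,d)$ (Definition of refinement order); on the subdivision side, $S\le T$ means each cell of $S$ sits inside a cell of $T$, and the covering relations subdivide one cell $\tau\cong\Delta_{h,d}$ into a finer $\pi$-induced subdivision of $\pi(\tau)$. Under the bijection these are visibly the same operation cell-by-cell, so the bijection is an isomorphism of posets. The minimal elements on the subdivision side are the tight $\pi$-induced subdivisions — those where no cell can be further subdivided, i.e. every cell is $\pi$-image of a face with $\dim\conv(\sigma)=\dim Q=2$ that is itself indecomposable; by the Lemma characterizing plabic graphs (all internal vertices of type $(1,3)$, $(2,3)$, $(0,1)$, $(1,1)$) these correspond exactly to plabic graphs, since $\Delta_{1,3}$ and $\Delta_{2,3}$ (triangles) and the degenerate $\Delta_{0,1}$, $\Delta_{1,1}$ (points/segments on the boundary) are the only indecomposable cells. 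Finally, flips between tight subdivisions are the rank-1 elements of the flip graph, covering a tight subdivision; via the bijection these are the almost plabic graphs, and by the Lemma on almost plabic graphs each covers exactly two plabic graphs — matching the fact that each flip connects exactly two tight subdivisions — and the three move types $(1,4),(2,4),(3,4)$ correspond to the three ways a single cell $\Delta_{1,4},\Delta_{2,4},\Delta_{3,4}$ (or its degenerations) can be retriangulated, the $(2,4)$ square move being the familiar flip of the square $\Delta_{2,4}$ between its two triangulations.

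\emph{Main obstacle.} I expect the hard part to be pinning down the dictionary between ``reducedness'' of $G$ and the separation/non-overlap axioms (3)–(4) for $\pi$-induced subdivisions, and in particular verifying that \emph{every} tight $\pi$-induced subdivision arises this way — i.e. a surjectivity statement analogous to part (2) of Theorem~\ref{th:WS=plabic}, now at the level of all cells of the polygon $Q$ rather than just its triangulations. This likely requires showing that an arbitrary $\pi$-induced subdivision can be refined to a tight one (the analogue of ``any Grassmannian graph refines to a plabic graph''), which in turn rests on the $d=2$ case of flip-connectivity/lifting discussed in the previous section; I would isolate this as the technical core and handle the rest by the bookkeeping above.
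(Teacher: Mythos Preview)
Your overall strategy---planar duality between subdivisions of the polygon $Q(k,n,2)$ and Grassmannian graphs, then matching refinement on both sides---is the paper's strategy as well. The differences are in how the hard step is executed.

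The paper proves the plabic/tight case first and only then lifts to general Grassmannian graphs by refinement, just as you suggest at the end. For the plabic case, however, the paper does \emph{not} try to verify the subdivision axioms (3)--(4) directly from the strand conditions defining reducedness. In one direction it invokes \cite[Theorem~9.12]{OPS}, which already shows that the planar dual of a reduced bipartite plabic graph embeds in the $n$-gon as a ``plabic tiling'' with vertices $\pi(e_I)$; your appeal to Theorem~\ref{th:WS=plabic} is pointing in this direction, but it is the geometric embedding, not merely the weak separation of face labels, that does the work. In the other direction, to show that the dual $S^*$ of a plabic triangulation is reduced, the paper does not check ``no closed loops / no bad double crossings'' directly: it verifies $w_{S^*}(i)=i+k\pmod n$ and then argues by contradiction---if $S^*$ were not reduced, Theorem~\ref{th:move_refinement_equivalence} would let one reach by moves a graph with a parallel edge or loop, and moves translate into local retilings, but no plabic triangulation can be dual to such a graph. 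This is much more tractable than the direct dictionary you outline, and it is where your sketch is thinnest.

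One genuine caution: you write that the surjectivity step ``rests on the $d=2$ case of flip-connectivity/lifting discussed in the previous section.'' Flip-connectivity of $\omega(k,n,2)$ is a \emph{corollary} of the theorem you are proving (it follows from Theorem~\ref{th:move_refinement_equivalence}(2) once the isomorphism is in place), so using it as an input would be circular. Refining an arbitrary $\pi$-induced subdivision to a tight one is elementary---just triangulate each cell $\pi(\gamma_{I_0,I_1})$---and needs no flip-connectivity; what remains is to show the dual of that triangulation is reduced and that coarsening back is compatible, which the paper handles via \cite{OPS} and the contradiction-by-moves argument above rather than by anything from the Baues side.
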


Theorem~\ref{th:move_refinement_equivalence}(2) 
(\cite[Theorem~13.4]{Pos2}) immediately implies flip-connectivity.

\begin{corollary}
The minimal elements of Baues poset $\omega(k,n,2)$ are
connected by flips.
\end{corollary}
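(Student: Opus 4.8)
The plan is to read this off directly from the isomorphism of the preceding theorem together with Theorem~\ref{th:move_refinement_equivalence}(2). First I would identify the minimal elements of $\omega(k,n,2)$: under the canonical isomorphism with the poset of complete reduced Grassmannian graphs of type $(k,n)$ ordered by refinement, the tight (minimal) $\pi$-induced subdivisions correspond exactly to the minimal elements of the refinement order among those graphs, i.e.\ to complete reduced \emph{plabic} graphs of type $(k,n)$; and, again under this isomorphism, flips between tight subdivisions correspond to moves of plabic graphs (of types $(1,4)$, $(2,4)$, $(3,4)$). So the assertion ``the minimal elements of $\omega(k,n,2)$ are connected by flips'' is literally the statement ``any two complete reduced plabic graphs of type $(k,n)$ are move-equivalent.''

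Next I would invoke Theorem~\ref{th:move_refinement_equivalence}(2), i.e.\ \cite[Theorem~13.4]{Pos2}: for reduced plabic graphs, being move-equivalent, being refinement-equivalent, and having the same decorated strand permutation are all equivalent. By definition, every complete reduced plabic graph of type $(k,n)$ has decorated strand permutation $w(i) = i+k \pmod n$. Hence any two minimal elements of $\omega(k,n,2)$ correspond to reduced plabic graphs $G,G'$ with $w_G = w_{G'}$, so $G$ and $G'$ are move-equivalent; transporting the connecting sequence of moves through the isomorphism yields a sequence of flips connecting the two tight subdivisions, which is the claim.

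The only point requiring a little care is that the move-equivalence class of a complete reduced plabic graph of type $(k,n)$ stays inside the fibre of the isomorphism, i.e.\ that moves preserve being reduced, complete, and of type $(k,n)$. Reducedness is preserved because an almost plabic graph covering a reduced plabic graph is itself reduced and covers exactly two reduced plabic graphs. The decorated strand permutation (hence the helicity $k$ and the type $(k,n)$) is preserved because the two plabic graphs joined by a move are both refinements of the connecting almost plabic graph, and refinement of reduced graphs preserves the strand permutation. Completeness is preserved because, by Theorem~\ref{th:perf_orient_exists}(3), it is equivalent to a fixed internal face count (equivalently, to maximality in the refinement order among reduced graphs with the given strand permutation), and this is visibly move-invariant. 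I expect no genuine obstacle: essentially all of the content of the corollary is already carried by the preceding isomorphism theorem and by \cite[Theorem~13.4]{Pos2}, and what remains is merely the bookkeeping confirming that move-equivalence of reduced plabic graphs translates precisely into flip-connectivity of tight $\pi$-induced subdivisions.
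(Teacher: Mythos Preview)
Your proposal is correct and follows exactly the paper's own argument: the paper simply states that Theorem~\ref{th:move_refinement_equivalence}(2) (\cite[Theorem~13.4]{Pos2}) ``immediately implies flip-connectivity,'' and your write-up spells out precisely this implication via the isomorphism of the preceding theorem. The extra paragraph checking that moves stay within complete reduced plabic graphs of type $(k,n)$ is more care than the paper gives, but it is accurate and harmless.
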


\begin{example}
The Baues poset $\omega(1,n,2)$ is the poset of subdivisions
of an $n$-gon by non-crossing chords, i.e., it is the Stasheff's 
{\it associahedron.}  Its minimal elements correspond to the 
Catalan number ${1\over n-1}{2n-4\choose n-2}$ triangulations of 
the $n$-gon.
\end{example}

We can think of the Baues posets $\omega(k,n,2)$ as some 
kind of ``generalized associahedra.''  In general, they are not polytopal.
But they share some nice features with the associahedron.
It is well-known that every face of the associahedron is a direct product
of smaller associahedra.  The same is true for all $\omega(k,n,2)$.

\begin{proposition}
For any element $S$ in  $\omega(k,n,2)$, the lower order interval
$\{S' \mid S'\leq S\}$ in the Baues poset 
$\omega(k,n,2)$
is a direct product of Baues posets of the same form $\omega(k',n',2)$.
\end{proposition}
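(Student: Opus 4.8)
The plan is to exploit the isomorphism between $\omega(k,n,2)$ and the refinement poset of complete reduced Grassmannian graphs of type $(k,n)$ established in the preceding theorem, and then argue on the graph side, where the lower order interval has a completely transparent combinatorial description. Given such a graph $S$ (a complete reduced Grassmannian graph, corresponding to the element $S\in\omega(k,n,2)$), an element $S'\le S$ is obtained by replacing each internal vertex $v$ of $S$, of type $(h(v),\deg(v))$, by a complete reduced Grassmannian graph of type $(h(v),\deg(v))$. Since refinements at distinct vertices are performed independently and do not interact, the data of such an $S'$ is precisely a tuple $(S'_v)_{v\in V_\inte(S)}$, where $S'_v$ ranges over all complete reduced Grassmannian graphs of type $(h(v),\deg(v))$ — equivalently, over the refinement poset of such graphs, which by the theorem is isomorphic to $\omega(h(v),\deg(v),2)$.

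First I would make the bijection $S'\mapsto (S'_v)_v$ precise and check it is order-preserving in both directions: a refinement $S''\le S'$ is realized by refining each ``local piece'' $S'_v$ to $S''_v$, and conversely refining the pieces componentwise produces a valid refinement of $S$; so the map is a poset isomorphism onto $\prod_{v\in V_\inte(S)}\omega(h(v),\deg(v),2)$. The only subtle point is that $S$ itself sits as a common coarsening of all these, i.e., the interval $\{S'\mid S'\le S\}$ has $S$ as maximum, matching the product poset having its maximum at the tuple of one-vertex graphs; this is immediate. One should also note the degenerate cases $h(v)\in\{0,1,\deg(v)-1,\deg(v)\}$: here the ``little'' Grassmannian graph is rigid (a single vertex, or, for $h=0$ or $h=\deg(v)$, $n'$ boundary leaves with no internal vertex, as stipulated in the definition of completeness), so $\omega(h(v),\deg(v),2)$ is a one-element poset and contributes a trivial factor — consistent with the statement, which asserts a product of Baues posets of the same form without excluding trivial factors.

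The main obstacle I anticipate is purely a matter of bookkeeping rather than depth: one must verify that the boundary data of a little Grassmannian graph inserted at $v$ matches the $\deg(v)$ half-edges incident to $v$ in $S$, so that the insertion is well-defined and the resulting graph is again a complete reduced Grassmannian graph of type $(k,n)$ (completeness being preserved since, by Theorem~\ref{th:perf_orient_exists}(3), the internal-face count is additive over vertices and $f(k,n)-\sum_v f(h(v),\deg(v))$ telescopes correctly under refinement). Once this is checked, the componentwise nature of refinement — the key structural fact — gives the product decomposition directly. Alternatively, and perhaps more cleanly for the write-up, one can argue entirely on the polytope side: the $\pi$-induced subdivision $S$ cuts $Q(k,n,2)$ into polygonal cells, each of which is itself (combinatorially) a $2$-dimensional cyclic projection of a smaller hypersimplex $\Delta_{k'n'}$, and a subdivision refining $S$ is the same as an independent choice of $\pi$-induced subdivision of each such cell; the product structure of the lower interval is then immediate from this ``subdivide each cell independently'' description, with the identification of each cell's subdivision poset as $\omega(k',n',2)$. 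I would present the graph-theoretic argument as the main proof and remark on the geometric reformulation.
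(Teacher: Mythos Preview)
Your proposal is correct and follows essentially the same approach as the paper: both exploit the identification of $\omega(k,n,2)$ with the refinement poset of complete reduced Grassmannian graphs and observe that refinements at distinct internal vertices are performed independently, yielding the product $\prod_{v\in V_\inte} \omega(h(v),\deg(v),2)$. The paper's proof is a two-sentence sketch of precisely this idea; your write-up supplies the surrounding details (the order-preserving bijection, the degenerate factors, the face-count telescoping) and adds the polytope-side reformulation as a remark, but the core argument is the same.
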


\begin{proof}
This is easy to see in terms of complete reduced Grassmannian graphs $G$.
Indeed, for any $G$, all refinements of $G'$ are obtained
by refining all vertices of $G$ independently from each other.
\end{proof}

This property is related to the fact that every face of the 
hypersimplex $\Delta_{kn}$ is a smaller hypersimplex, as we discuss below.

\begin{remark}
Among all reduced Grassmannian/plabic graphs, there is a subset
of {\it coherent\/} (or {\it regular}) graphs, namely the ones 
that correspond to the coherent $\pi$-induced subdivisions
from $\omega_\coh(k,n,2)$.
Each of these graphs can be explicitly constructed in terms of 
a {\it height function.}   This subclass depends on a choice of the 
cyclic projection $\pi$.  Regular plabic graphs
are related to the study of {\it soliton solutions\/}
of Kadomtsev-Petviashvili (KP) equation, see \cite{KoW11, KoW12}.
We will investigate the class of regular plabic graphs in \cite{GPW}.
\end{remark}

Let us now give more details on the correspondence between Grassmannian 
graphs and subdivisions.
A cyclic projection $\pi:\Delta_{kn}\to Q(k,n,2)$
is the linear map
given by a $3\times n$ matrix $M=(u_1,\dots,u_n)$
such that $[u_1,\dots,u_n]\in Gr^{>0}(3,n)$ 
and $u_1,\dots,u_n$ all lie on the same affine plane $H_1\subset\R^3$.
Without loss of generality, assume that $H_1=\{(x,y,z)\mid z=1\}$.
The positivity condition means that the points $\pi(u_1),\dots,\pi(u_n)$
form a convex $n$-gon with vertices arranged in the counterclockwise order.

The polytope $Q:=Q(k,n,2)=\pi(\Delta_{kn})$ is the 
convex $n$-gon in the affine plane $H_k=\{(x,y,k)\}\subset \R^3$ with 
the vertices $\pi(e_{[1,k]})$, $\pi(e_{[2,k+1]})$, \dots, $\pi(e_{[n,k-1]})$
(in the counterclockwise order)
corresponding to all consecutive cyclic intervals of size $k$ in $[n]$.

Notice that each face $\gamma$ of the hypersimplex $\Delta_{kn}$ 
is itself a smaller hypersimplex of the form 
$$
\gamma_{I_0,I_1}:=\{(x_1,\dots,x_n)\in\Delta_{kn} \mid x_i = 0 
\textrm{ for } i\in I_0, \ x_j=1\textrm{ for }j\in I_1\}
$$
where $I_0$ and $I_1$ are disjoint subsets of $[n]$.
So $\gamma\simeq \Delta_{hm}$, where $h=k-|I_0|$ and $m=n-|I_0|-|I_1|$.
The projection $\pi$ maps the face $\gamma$ to the 
$m$-gon $\pi(\gamma)$ that carries an additional parameter $h$.

Thus the $\pi$-induced subdivisions $S$ are in bijective correspondence with 
the tilings of the $n$-gon $Q$ by smaller convex polygons such that: 
\begin{enumerate}
\item Each vertex has the form $\pi(e_I)$ for $I\in{[n]\choose k}$.
\item  Each edge has the form $[\pi(e_I),\pi(e_J)]$ for
two $k$-element subsets $I$ and $J$ such that $|I\cap J|=k-1$.
\item  Each face is an $m$-gon of the form $\pi(\gamma_{I_0,I_1})$, as above.
\end{enumerate}

Let $S^*$ be the planar dual of such a tiling $S$.  
The graph $S^*$ has exactly $n$
boundary vertices $b_i$ corresponding to the sides 
$[\pi(e_{[i,i+k-1]}), \pi(e_{[i+1, i+k])}]$ of the $n$-gon $Q$.
The internal vertices $v$ of $S^*$ 
(corresponding to faces $\gamma$ of $S$) are equipped with 
the parameter $h=h(v)\in\{0,\dots,\deg(v)\}$.  
Thus $S^*$ has the structure of a Grassmannian graph.
Moreover,  each face $F$ of $S^*$ (corresponding a vertex $\pi(e_I)$ of $S$)
is labelled by a subset $I\in{[n]\choose k}$.  
We can now make the previous theorem more precise.

\begin{theorem}
The map $S\mapsto S^*$ is 
an isomporphism between the Baues poset $\omega(k,n,2)$
and complete reduced Grassmannian graphs $G$ of type $(k,n)$.
For each face $F$ of $G=S^*$ corresponding to a vertex $\pi(e_I)$ of $S$,
the subset $I\subset {[n]\choose k}$ is exactly the face label $I_F$ 
(see Definition~\ref{def:face_label}).
\end{theorem}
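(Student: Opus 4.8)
The plan is to establish the bijection $S \mapsto S^*$ by checking that the planar dual of a $\pi$-induced subdivision of $\Delta_{kn}$ satisfies all the defining conditions of a complete reduced Grassmannian graph of type $(k,n)$, that the inverse construction recovers $S$ from $S^*$, and that the face-labelling of $S^*$ matches the labelling of vertices $\pi(e_I)$ in $S$. Having already observed (from the discussion preceding the statement) that $\pi$-induced subdivisions $S$ correspond exactly to tilings of the $n$-gon $Q$ by smaller polygons satisfying conditions (1)--(3), and that the dual $S^*$ carries the data of a Grassmannian graph with helicity parameters $h(v)$ coming from the hypersimplex-dimension data of the faces $\gamma$, the main tasks are: (i) show $S^*$ is \emph{reduced}; (ii) show $S^*$ is \emph{complete of type $(k,n)$}; (iii) identify the face labels; and (iv) verify functoriality with respect to the refinement orders, which will simultaneously handle the ``plabic graphs $\leftrightarrow$ tight subdivisions'' and ``moves $\leftrightarrow$ flips'' claims of the previous theorem.

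\emph{Step 1: strands versus straight-line segments.} The key geometric observation is that a strand in $S^*$ (a directed walk obeying the Rules of the Road at each internal vertex of helicity $h(v)$) corresponds, under duality, to a straight line in the plane of $Q$ crossing the tiling $S$. Concretely, a strand entering an internal vertex $v$ through edge $a_i$ and leaving through $a_{i+h(v)}$ is dual to a directed segment that enters the polygon $\pi(\gamma)$ across one edge and exits across the edge that is $h$ positions further in cyclic order; since each edge of $S$ has the form $[\pi(e_I),\pi(e_J)]$ with $|I\cap J|=k-1$, crossing such an edge corresponds to toggling a single coordinate, and the Rules of the Road with step $h(v)$ are exactly what makes these crossings globally consistent. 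I would show that each strand of $S^*$ is (the dual of) the trace of a line segment that enters $Q$ on one side $[\pi(e_{[i,i+k-1]}),\pi(e_{[i+1,i+k]})]$ and leaves on another, and that two strands cross at most once because two lines in the plane cross at most once. This yields conditions (1)--(3) of reducedness essentially for free: no closed loops (a line segment is not closed), strands are simple (lines have no self-intersections), and no bad double crossings (a bad double crossing would force two lines to cross twice, or to agree on an interval, which the strict convexity of the $n$-gon and the genericity of $\pi$ forbid). For condition (4), absence of degree-$2$ vertices follows because a degree-$2$ internal vertex of $S^*$ is dual to a face of $S$ with only two edges, i.e. a ``bigon,'' which cannot occur in a genuine polygonal tiling; more precisely, a face $\pi(\gamma_{I_0,I_1})$ with $\gamma\simeq\Delta_{hm}$ is an $m$-gon with $m\geq 3$ when it has positive area.

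\emph{Step 2: completeness and the helicity computation.} To see $S^*$ is complete of type $(k,n)$, I would use Theorem~\ref{th:perf_orient_exists}(3): it suffices to check $h(S^*)=k$ and that the number of internal faces equals $f(k,n)-\sum_{v}f(h(v),\deg(v))$. The helicity count $h(S^*)=k$ follows from the Helicity Conservation Law applied along boundary edges, tracking that each vertex $\pi(e_{[i,i+k-1]})$-to-$\pi(e_{[i+1,i+k]})$ edge of $Q$ is crossed consistently; alternatively one reads it directly from the fact that every vertex label of $S$ is a $k$-subset, so every face label of $S^*$ has size $k$, forcing $h(S^*)=k$ by the Helicity Conservation Law. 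The face-count identity is a Euler-characteristic bookkeeping: internal faces of $S^*$ are in bijection with internal vertices $\pi(e_I)$ of $S$ (those not on the boundary $n$-gon), internal vertices of $S^*$ biject with two-dimensional faces $\gamma$ of $S$, internal edges of $S^*$ biject with interior edges of $S$; plugging into Euler's formula for the planar subdivision $S$ of the disk and comparing against the same computation applied to each ``piece'' $\pi(\gamma)$ (where $f(h,d)=(h-1)(d-h-1)$ counts the interior lattice-like data of a $d$-gon of hypersimplex-type $h$) gives the telescoping identity. I would also invoke the last sentence of Theorem~\ref{th:perf_orient_exists}(3): $S^*$ is complete iff it is not a proper induced subgraph of a larger reduced Grassmannian graph, which is transparent on the subdivision side since $S$ tiles all of $Q$.

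\emph{Step 3: face labels and functoriality.} For the face-label claim, I would argue by induction on strands, or directly: by Definition~\ref{def:face_label}, $I_F$ consists of those $j$ such that $F$ lies to the left of the strand ending at $b_j$. Since the strand ending at $b_j$ is dual to a line segment leaving $Q$ through the side $[\pi(e_{[j-k+1,j]}),\pi(e_{[j-k+2,j+1]})]$ (the two $k$-intervals adjacent to index $j$), being on the left of that segment is exactly the condition that the vertex $\pi(e_I)$ dual to $F$ satisfies $j\in I$ --- this is the standard ``wiring diagram'' translation between which side of a line a lattice point lies on and which coordinate is switched on. Finally, for the order-isomorphism and the refinement/tight/flip statements: refining a Grassmannian graph means replacing an internal vertex of type $(h,d)$ by a complete reduced graph of type $(h,d)$, which dually means subdividing the corresponding $m$-gon face $\pi(\gamma)$ of $S$ into smaller polygons --- i.e. exactly the refinement relation on $\pi$-induced subdivisions. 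Plabic graphs, being minimal in the refinement order with all vertices of type $(1,3)$ or $(2,3)$, dualize to subdivisions all of whose $2$-faces are triangles $\pi(\gamma_{I_0,I_1})$ with $m=3$, i.e. the tight subdivisions; and the moves of type $(1,4),(2,4),(3,4)$ (which swap the two plabic graphs covered by an almost-plabic graph with one special type-$(\cdot,4)$ vertex) dualize to the two triangulations of a quadrilateral face $\pi(\gamma)$ with $m=4$, i.e. the flips. This last correspondence is immediate once Step 1 and Step 2 are in place, since it is purely local.

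\emph{Main obstacle.} I expect the genuine work to be concentrated in Step 1 --- proving rigorously that strands of $S^*$ really are ``straight'' in the appropriate sense, so that reducedness (especially the no-bad-double-crossing condition (3)) is automatic. The subtlety is that the Rules of the Road are a purely combinatorial local rule, and one must show they globally integrate to the trace of a line; this requires using the positivity of the cyclic projection $\pi$ (the vertices $\pi(u_i)$ in convex position) in an essential way, not just the combinatorial type of the tiling. A clean approach would be to lift to the cyclic zonotope picture: a line in the plane of $Q$ pulls back to a hyperplane in $\mathbb{R}^{d+1}=\mathbb{R}^3$, and the combinatorics of how it meets the tiling $S$ is controlled by the oriented matroid of $\pi$, which by the positivity hypothesis is the alternating (cyclic) oriented matroid; then the Rules of the Road with parameter $h(v)$ are forced by the chirotope sign pattern. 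Once the ``strands are lines'' principle is nailed down with the right hypotheses, the remaining steps are bookkeeping of the kind the paper has already indicated is routine.
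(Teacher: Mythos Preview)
Your approach differs substantially from the paper's, and there is a genuine gap in Step~1. You claim each strand of $S^*$ is dual to a straight line segment across $Q$, and deduce reducedness from ``two lines meet at most once.'' But strands are not straight: within a single face $\pi(\gamma)$ the strand is indeed a chord of that $m$-gon, yet as it passes from face to face it bends, giving only a piecewise-linear curve. Already for $k=1$ (where every internal vertex has $h(v)=1$) strands zigzag through the triangulation rather than cutting across it. More concretely, for the strand tracking index $j$ to be a genuine line one would need a linear functional $\ell$ on $\R^3$ with $\ell(\pi(e_I))=[j\in I]$ for all $I$, i.e.\ $e_j$ in the row span of the $3\times n$ matrix $M$ defining $\pi$; this fails generically. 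So your deduction of the no-self-intersection and no-bad-double-crossing conditions does not go through. Your proposed rescue---lift to a zonotopal tiling of $Z(n,3)$ and read off the pseudoline structure from the alternating oriented matroid---presupposes that every element of $\omega(k,n,2)$ lifts, which is precisely Galashin's theorem, recorded separately in the paper and not available as an input here.

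The paper sidesteps this entirely. For the tight case it quotes the embedding theorem of \cite{OPS} (duals of reduced bipartite plabic graphs embed as \emph{plabic tilings}) for the direction ``reduced plabic $\Rightarrow$ plabic triangulation.'' For the converse it checks directly that the strand permutation of $S^*$ is $i\mapsto i+k$, and then proves reducedness by contradiction via moves: if $S^*$ were not reduced, Theorem~\ref{th:move_refinement_equivalence}(4) would yield, after a sequence of $(1,4)$, $(2,4)$, $(3,4)$ moves, a graph with parallel edges or a loop-edge; but each move is dual to a local flip that stays within the class of plabic triangulations, whereas a graph with parallel edges or a loop cannot be the planar dual of any plabic triangulation. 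The general Grassmannian-graph case then follows by refining to plabic, embedding via the tight case, and regrouping triangles back into the polygons $\pi(\gamma)$. Your Step~3 observations---face labels via ``strand $j$ separates $j\in I_F$ from $j\notin I_F$,'' refinements $\leftrightarrow$ subdivisions, moves $\leftrightarrow$ flips---are correct and are exactly the local ingredients the paper uses; but the reducedness step is where the content lies, and it is handled by this move-contradiction argument rather than by plane geometry.
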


\begin{proof}
Let us first show that {\it tight\/} $\pi$-induced subdivisions $S$ are 
in bijection with complete reduced {\it plabic\/} graphs of type $(k,n)$.
That means that, in addition to the conditions (1), (2), (3) above,
we require the tiling $S$ of the $n$-gon $G$ has all triangular faces.
So $S$ is a triangulation of the $n$-gon $Q$ of a special kind,
which we call a {\it plabic triangulation.}

Such plabic triangulations of the $n$-gon are closely related to 
{\it plabic tilings\/} from \cite{OPS}.
The only difference between plabic triangulations and plabic tilings 
is that the latter correspond not to (3-valent) plabic graphs 
(as defined in the current paper) but to {\it bipartite\/} plabic graphs.
A bipartite plabic graph $G$ is exactly a Grassmannian graph such 
that each internal vertex either has type $(1,d)$ (white vertex)
or type $(d-1,d)$ (black vertex), and every edge of $G$ connects vertices
of different colors.  Each reduced 3-valent plabic graph $G'$ can be easily 
converted into a bipartite plabic graph $G$ by contrating edges
connecting vertices of the same color.
It was shown in \cite[Theorem~9.12]{OPS} that the planar dual graph
of any reduced bipartite plabic graph $G$ can be embedded inside an $n$-gon
as a plabic tiling with black and white regions 
and all vertices of the form $\pi(e_I)$.
If we now subdivide the black and white regions of such 
plabic tiling by chords into triangles, we can get back the plabic triangulation
associated with a (3-valent) plabic graph $G'$.
This shows that any complete reduced (3-valent) plabic graph is indeed the planar dual
of a tight $\pi$-induced subdivision.

On the other hand, for each plabic triangulation $S$ we can construct
the plabic graph by taking its planar dual $G=S^*$ as described above.
It is easy to check from the definitions that the decorated
strand permutation $w$ of $G$ is exactly $w(i)= i+k\pmod n$.
It remains to show that this plabic graph $G$ is reduced.
Suppose that $G$ is not reduced.  
Then by Theorem~\ref{th:move_refinement_equivalence}(5),
after possibly applying a sequence of moves $(1,4)$, $(2,4)$, and/or $(3,4)$,
we get a plabic graph with a pair of parallel edges or with a loop-edge.
It is straightforward to check that applying the moves 
$(1,4)$, $(2,4)$, $(3,4)$ corresponds to local transformations 
of the plabic triangulation $S$, and transforms it into another
plabic triangulations $S'$.  However, it is clear
that if a plabic graph $G$ contains parallel edges of a loop-edge, 
then the dual graph is {\it not\/} a plabic triangulation.
So we get a contradiction, which proves the result for plabic graphs 
and tight subdivisions.

Now let $G'$ be any complete reduced Grassmannian graph of type $(k,n)$, 
and let $G$ be its plabic refinement.  We showed that we can embed the planar 
dual graph $G^*$ as a plabic triangulation $S$ into the $n$-gon.
The union of triangles in $S$ that correspond to a single vertex $v$
of $G'$ covers a region inside $Q$.  We already know that this region 
is a convex $m$-gon 
(because we already proved the correspondence for plabic graphs).
Thus, for each vertex of $G'$, we get a convex polygon in $Q$ and all these 
polygons form $\pi$-induced subdivision.  So we proved that the planar
dual of $G'$ can be embedded as a polyhedral subdivision of $Q$.  The inverse
map is $S'\mapsto G'=(S')^*$.  
\end{proof}

Let us mention a related result of Galashin~\cite{Gal}.

\begin{theorem} 
\label{th:galashin_cubes}
\cite{Gal}
Complete reduced plabic graphs of type $(k,n)$ are exactly 
the dual graphs of sections of fine zonotopal tilings
of the 3-dimensional cyclic zonotope $Z(n,3)$ by the hyperplane $H_k$.
\end{theorem}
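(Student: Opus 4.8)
The plan is to derive the statement from the isomorphism $S\mapsto S^*$ of the preceding theorem, which identifies the Baues poset $\omega(k,n,2)$ with the poset of complete reduced Grassmannian graphs of type $(k,n)$ and, under this identification, the complete reduced \emph{plabic} graphs of type $(k,n)$ with the tight (minimal) $\pi$-induced subdivisions of the $n$-gon $Q=Q(k,n,2)$ --- the \emph{plabic triangulations} of $Q$. Since $Q=Z(n,3)\cap H_k$ and the section map sends a zonotopal tiling $T$ of $Z(n,3)$ to $\mathrm{Section}_k(T)=T\cap H_k$, it is enough to prove that the plabic triangulations of $Q$ are exactly the sets $T\cap H_k$, $T$ ranging over the fine zonotopal tilings of $Z(n,3)$; the assertion about dual graphs is then precisely what the preceding theorem delivers. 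This would also yield the case $d=2$ of the coincidence of the minimal elements of $\omega_{\lift}(k,n,d)$ and $\omega(k,n,d)$.

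First I would check that a section $T\cap H_k$ of a fine tiling is always a plabic triangulation. Every maximal cell of a fine tiling $T$ of $Z(n,3)$ is a $3$-parallelepiped $v_R+[0,u_a]+[0,u_b]+[0,u_c]$ with $\{a,b,c\}\in{[n]\choose 3}$, $R\subseteq[n]\setminus\{a,b,c\}$ and $v_R=\sum_{i\in R}u_i$, so that $f(v_R)=|R|$. Intersecting this parallelepiped with the affine hyperplane $H_k=\{f=k\}$ gives: the empty set unless $|R|\in\{k-3,k-2,k-1,k\}$; the single vertex $\pi(e_R)$ or $\pi(e_{R\cup\{a,b,c\}})$ when $|R|=k$ or $|R|=k-3$; the ``white'' triangle $\conv(\pi(e_{R\cup a}),\pi(e_{R\cup b}),\pi(e_{R\cup c}))=\pi(\gamma)$ with $\gamma\cong\Delta_{1,3}$ when $|R|=k-1$; and the ``black'' triangle $\conv(\pi(e_{R\cup\{a,b\}}),\pi(e_{R\cup\{a,c\}}),\pi(e_{R\cup\{b,c\}}))=\pi(\gamma)$ with $\gamma\cong\Delta_{2,3}$ when $|R|=k-2$. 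Hence $T\cap H_k$ subdivides $Q$ into white and black triangles meeting along edges $[\pi(e_I),\pi(e_J)]$ with $|I\cap J|=k-1$; that is a plabic triangulation, whose dual is a complete reduced plabic graph of type $(k,n)$ by the preceding theorem.

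The substantial direction is the converse: every plabic triangulation of $Q$ is a section of a fine tiling. I would prove it by connectivity by moves. The image of the section map on fine tilings is nonempty and contains a plabic triangulation --- for instance the section of the coherent tiling $T$ attached to a generic linear height function on $\cube_n$ (it is fine by the computation above). Next, every flip of a fine zonotopal tiling of $Z(n,3)$ is a ``$Z(4,3)$-swap'' carried by a $4$-dimensional cube face $\delta\cong\cube_4$ of $\cube_n$: there are exactly two fine zonotopal tilings of $Z(4,3)$, and they differ by a flip. Writing $I_1(\delta)$ for the set of coordinates that $\delta$ fixes to $1$, the slicing analysis above shows that a $Z(4,3)$-swap at $\delta$ changes $T\cap H_k$ precisely when $k-4<|I_1(\delta)|<k$, i.e.\ when $h:=k-|I_1(\delta)|\in\{1,2,3\}$, and that in this case the sections of the two fine $Z(4,3)$-tilings are the two plabic triangulations of the almost-plabic region $\pi(\delta\cap H_k)=\pi(\Delta_{h,4})$ --- that is, they differ by a plabic move of type $(h,4)$. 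The remaining ingredient is the converse compatibility, call it the \emph{lifting lemma}: if $\mathrm{Section}_k(T)=S$ and $S\to S'$ is a plabic move flipping the region $\pi(\gamma)$ with $\gamma=\delta\cap H_k$, then $T$ restricts over $\delta$ to a fine zonotopal tiling of $\pi(\delta)\cong Z(4,3)$, so swapping it and regluing produces a fine tiling $T'$ of $Z(n,3)$ with $\mathrm{Section}_k(T')=S'$. Granting this, the set of plabic triangulations of $Q$ that are realized as sections is nonempty and closed under every plabic move; since all plabic triangulations of $Q$ --- equivalently, all complete reduced plabic graphs of type $(k,n)$ --- are connected by moves (Theorem~\ref{th:move_refinement_equivalence}(2)), that set is all of them.

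I expect the lifting lemma to be the main obstacle. A generic $4$-dimensional cube face of a generic fine tiling of $Z(n,3)$ does \emph{not} carry a restriction of the tiling, so the argument must use that $\delta$ is exactly the cube face ``seen'' by the almost-plabic quadrilateral of $\mathrm{Section}_k(T)$. I would establish it via the Bohne--Dress correspondence \cite{Boh}, which identifies fine zonotopal tilings of $Z(n,3)$ with single-element liftings in general position of the rank-$3$ cyclic oriented matroid $C^{3,n}$ realized by $u_1,\dots,u_n$: passing to the cube face $\delta$ corresponds to an oriented-matroid minor, the minors relevant here are again cyclic oriented matroids $C^{3,4}$, and $C^{3,4}$ has exactly two single-element liftings, matching the two fine tilings of $Z(4,3)$. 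The technical core is to match this minor precisely with the combinatorial data encoded by the almost-plabic region, so that the section of the flipped lifting is the flipped plabic triangulation; flip-connectivity of fine tilings of $Z(n,3)$ from \cite{SZ,Ziegler} confirms that the whole picture is internally consistent.
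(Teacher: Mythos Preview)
The paper does not prove this theorem: it is quoted from Galashin's paper \cite{Gal} and stated without proof, so there is no ``paper's own proof'' to compare against. What follows is therefore an assessment of your strategy on its own terms.

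Your outline is sensible and the easy direction is fine: slicing a $3$-parallelepiped of a fine tiling of $Z(n,3)$ by $H_k$ indeed yields either nothing, a vertex, or a single black/white triangle, so $\mathrm{Section}_k(T)$ is a tight element of $\omega(k,n,2)$ and hence, by the preceding theorem, the dual of a complete reduced plabic graph. Your reduction of the hard direction to move-connectivity is also the natural plan.

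The genuine gap is exactly where you flag it, and your sketch does not close it. The ``lifting lemma'' asserts that whenever $\mathrm{Section}_k(T)=S$ and $S\to S'$ is a plabic move supported on the quadrilateral $\pi(\delta\cap H_k)$ for a $4$-face $\delta$ of $\cube_n$, the tiling $T$ already restricts to a fine tiling of the sub-zonotope $\pi(\delta)\cong Z(4,3)$. This is precisely what is not automatic: the tiles of $T$ meeting $\pi(\delta)$ are, a priori, parallelepipeds $v_R+[0,u_p]+[0,u_q]+[0,u_r]$ with $\{p,q,r\}\not\subset\{a,b,c,d\}$, and the fact that their slice by $H_k$ happens to land inside the quadrilateral does not force $\{p,q,r\}\subset\{a,b,c,d\}$. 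Your Bohne--Dress paragraph does not address this: saying ``passing to the cube face $\delta$ corresponds to an oriented-matroid minor'' is true for the cyclic matroid $C^{3,n}$, but the single-element lifting encoding $T$ need not respect that minor in the way you need --- one must actually argue, from the specific combinatorics of the almost-plabic square in the section, that every tile of $T$ touching the region uses only the four directions $u_a,u_b,u_c,u_d$. That argument is the content of the theorem, and it is what Galashin supplies in \cite{Gal}; it is not a formality. (Note also that for your purposes the weaker statement ``$S'$ is the section of \emph{some} fine tiling'' would suffice, but that is not obviously easier.)
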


In view of the discussion above, this result means that
any tight $\pi$-induced subdivision in $\omega(k,n,2)$ can be lifted to a
fine zonotopal tiling of the cyclic zonotope $Z(n,3)$.
In other words, the posets $\omega(k,n,2)$ and $\omega_\lift(k,n,2)$
have the same sets of minimal elements.
A natural question to ask: Is the same true for all (not necessarily 
minimal) elements of $\omega(k,n,2)$?

\section{Membranes and discrete Plateau's problem}

Membranes from the joint project with Lam \cite{LP} provide another related 
interpretation of plabic graphs.
Let $\Phi=\{e_i - e_j \mid i\ne j\}\in\R^n$, where $e_1,\dots,e_n$
are the standard coordinate vectors. 

\begin{definition} \cite{LP}
A {\it loop\/} $L$ is a closed piecewise-linear curve in $\R^n$ 
formed by line segments $[a,b]$ such that $a,b\in\Z^n$ and $a-b\in\Phi$.

A {\it membrane\/} $M$ with {\it boundary loop\/} $L$ is an embedding 
of a 2-dimensional disk into $\R^n$ such that $L$ is the boundary of $M$,
and $M$ is made out of triangles $\conv(a,b,c)$, where $a,b,c\in\Z$
and $a-b,b-c,a-c\in\Phi$.

A {\it minimal membrane\/} $M$ is a membrane that has minimal possible area
(the number of triangles) among all membranes with the same boundary loop $L$.
\end{definition}

The problem about finding a minimal membrane $M$
with a given boundary loop $L$ is a discrete version 
of {\it Plateau's problem\/} about minimal surface.
Informally speaking, membranes correspond to (the duals of) plabic graphs,
and minimal membranes correspond to reduced plabic graphs.   
Here is a more careful claim.

\begin{theorem} \cite{LP}
Let $w\in S_n$ be a permutation without fixed points 
with helicity $h(w)=k$.
Let $L_w$ be the closed loop inside the hypersimplex $\Delta_{kn}$
formed by the line segments $[a_1,a_2],[a_2,a_3],\dots,[a_{n-1},a_n],[a_n,a_1]$
such that $a_{i+1}-a_i = e_{w(i)}-e_i$, for $i=1,\dots,n$, with indices 
taken modulo $n$.

Then minimal membranes $M$ with boundary loop $L_w$ are in bijection
with reduced plabic graphs $G$ with strand permutation $w$.
Explicitly, the correspondence is given as follows.  Faces $F$ of $G$ with face
labels $I=I_F$ correspond to vertices $e_I$ of the membrane $M$.  
Vertices of $G$ with 3 adjacent faces
labeled by $I_1,I_2,I_3$ correspond to triangles $\conv(e_{I_1},e_{I_2},e_{I_3})$
in $M$.

Moves of plabic graphs correspond to local area-preserving transformations of
membranes.  
Any two minimal membranes with the same boundary loop $L_w$
can be obtained from each other by these local transformations.
\end{theorem}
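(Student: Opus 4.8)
The plan is to reduce the statement about membranes to the already-established correspondence between reduced plabic graphs of type $(k,n)$ and tight $\pi$-induced subdivisions in $\omega(k,n,2)$, using the planar-dual picture from the previous section. First I would unwind the definitions: a minimal membrane $M$ with boundary $L_w$ is a union of lattice triangles $\conv(e_{I_1}, e_{I_2}, e_{I_3})$ with $e_{I_a} - e_{I_b} \in \Phi$, i.e.\ $|I_a \cap I_b| = k-1$, all lying in the hyperplane $\sum x_i = k$ and forming a disk with boundary $L_w$. The boundary loop $L_w$ has vertices $a_1, \dots, a_n$ with $a_{i+1} - a_i = e_{w(i)} - e_i$; since $w$ is fixed-point-free with helicity $k$, these are exactly the vertices $e_{J_i}$ of the Grassmann necklace of the positroid/decorated permutation $w$, which for the complete case $w(i) = i+k \pmod n$ are the cyclic intervals $e_{[i,i+k-1]}$ — precisely the vertices of the $n$-gon $Q = Q(k,n,2) = \pi(\Delta_{kn})$. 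So the key first step is to observe that the cyclic projection $\pi: \Delta_{kn} \to Q$ restricts to an affine isomorphism on the $2$-plane $H_k \cap (\text{affine span of } L_w)$ — more precisely, a membrane $M$ sits inside the $(n-1)$-dimensional hypersimplex but its image under $\pi$, and indeed $M$ itself being $2$-dimensional, identifies it with a triangulated $n$-gon. I would make precise that $\pi$ maps $M$ bijectively onto $Q$ and sends the triangulation of $M$ into a triangulation of $Q$ with all vertices at points $\pi(e_I)$, $I \in \binom{[n]}{k}$, and all edges of the form $[\pi(e_I), \pi(e_J)]$ with $|I \cap J| = k-1$: that is, a \emph{plabic triangulation} in the sense of the proof of the preceding theorem.

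The second step is to run this in both directions and track minimality. Given a minimal membrane $M$, its image is a triangulation $S$ of $Q$ satisfying conditions (1)--(3) (with all faces triangles), hence a tight element of $\omega(k,n,2)$; by the preceding theorem $S^* = G$ is a complete reduced plabic graph, and since $w$ is fixed-point-free the strand permutation is $w(i) = i+k$, matching $w$ after relabeling. Conversely a reduced plabic graph $G$ with strand permutation $w$ gives a plabic triangulation $S$ of $Q$, and I would lift $S$ back to a surface in $\Delta_{kn}$ by placing the vertex over $\pi(e_I)$ at the honest lattice point $e_I \in \Z^n$; the triangles $\conv(e_{I_1}, e_{I_2}, e_{I_3})$ are genuine lattice triangles with differences in $\Phi$, and one checks the union is an embedded disk (embeddedness in $\R^n$ follows because the $e_I$ are distinct vertices of the hypersimplex and the projection $\pi$ is injective on each triangle and globally on $S$, so no two triangles overlap except along shared faces) with boundary $L_w$. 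This gives the bijection, and the face-label/vertex and vertex/triangle correspondences are immediate from the dual construction, exactly as in Definition~\ref{def:face_label}. The minimality claim follows because the number of triangles equals twice the number of internal faces of $G$ plus a boundary correction that is the same for all membranes with boundary $L_w$ (it is governed by the area of the $n$-gon $Q$, which is fixed), so \emph{every} membrane with boundary $L_w$ that is a plabic triangulation has the same area, and one must separately argue that no membrane with boundary $L_w$ can have strictly smaller area — i.e.\ that minimal area forces the image to actually be $Q$ with the triangulation having exactly the vertices forced by reducedness; here I would invoke that the minimal area is realized and that reduced plabic graphs are precisely those with the minimal number of faces among all (not necessarily reduced) plabic graphs with strand permutation $w$, which is Theorem~\ref{th:positroids_perm_neckl_moves} and Theorem~\ref{th:move_refinement_equivalence} combined with the face-count formula in Theorem~\ref{th:perf_orient_exists}(3).

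Finally, for the last sentence — that moves of plabic graphs correspond to local area-preserving transformations of membranes and that any two minimal membranes with the same boundary are connected by them — I would transport the statement through the dual: the moves $(1,4)$, $(2,4)$, $(3,4)$ on plabic graphs correspond, as noted in the proof of the preceding theorem, to local retriangulations of the plabic triangulation $S$ (a square move is the flip of the diagonal in a quadrilateral $\conv(e_{I_1}, e_{I_2}, e_{I_3}, e_{I_4})$; the contraction-uncontraction moves are the analogous local moves at a $(1,4)$ or $(3,4)$ vertex), each of which visibly preserves the total number of triangles, hence the area of the membrane. Connectivity of all minimal membranes with boundary $L_w$ under these moves is then exactly the flip-connectivity of tight elements of $\omega(k,n,2)$, which is the Corollary stated above (equivalently Theorem~\ref{th:move_refinement_equivalence}(2), move-connectivity of reduced plabic graphs with a fixed decorated strand permutation). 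The main obstacle I anticipate is the careful verification that minimal \emph{area} among all membranes — a priori a very large and unstructured set of lattice surfaces, not obviously plabic triangulations — is attained precisely by the plabic triangulations and not by something with fewer triangles; this requires a genuine area/length argument (e.g.\ comparing to the minimal membrane count via a lower bound from the boundary loop, analogous to the lower bound on the length of reduced words), and is the one place where the reduction to the already-proven combinatorics does not come for free.
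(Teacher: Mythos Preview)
The paper does not prove this theorem: it is stated with attribution to \cite{LP}, a work listed as ``In preparation,'' and no argument is given here. So there is no proof in the paper to compare your proposal against.

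On its own terms, your proposal has a real gap. The theorem is stated for an \emph{arbitrary} fixed-point-free permutation $w$ of helicity $k$, but your reduction goes entirely through the machinery of the preceding section --- the Baues poset $\omega(k,n,2)$, plabic triangulations of the $n$-gon $Q=Q(k,n,2)$, and the duality $S\leftrightarrow S^*$ --- which is set up only for \emph{complete} reduced Grassmannian graphs, i.e., those with strand permutation $w(i)=i+k\pmod n$. You notice this yourself (``which for the complete case $w(i)=i+k$\dots'') and then proceed as though that is the only case; it is not. For general $w$ the vertices $a_i$ of $L_w$ are the Grassmann-necklace points $e_{J_i}$, which under $\pi$ land in the interior of $Q$, not on its boundary, so a membrane with boundary $L_w$ does not project to a subdivision of the $n$-gon and its dual is not a complete graph of type $(k,n)$. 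Even restricting to the complete case, there is a second gap you yourself flag at the end: you assert that $\pi$ maps a minimal membrane $M$ bijectively onto $Q$, but $M$ is a $2$-surface in $\R^n$, not contained in any $2$-plane, and nothing you cite rules out folding under $\pi$. The direction you handle cleanly is the lift (plabic triangulation $\Rightarrow$ embedded membrane); the converse --- that minimality of area forces $\pi|_M$ to be injective onto $Q$, so that every minimal membrane comes from a plabic triangulation --- is precisely the ``genuine area/length argument'' you say is missing, and none of Theorems~\ref{th:perf_orient_exists}, \ref{th:move_refinement_equivalence}, \ref{th:positroids_perm_neckl_moves} supplies it, since those control face counts of reduced plabic graphs, not triangle counts of arbitrary lattice $2$-disks in $\R^n$.
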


\section{Higher positive Grassmannians and Amplituhedra}

The relation between the positive Grassmannian 
$Gr^{>0}(k,n)$ and the Baues poset $\omega(k,n,2)$
raises a natural question: What is the geometric counterpart of 
the Baues poset $\omega(k,n,d)$ for any $d$?
These ``higher positive Grassmannians'' should generalize $Gr^{>}(k,n)$
in the same sense as Manin-Shekhtman's higher Bruhat orders
generalize the weak Bruhat order.
The first guess is that they might be related to amplituhedra.

Arkani-Hamed and Trnka \cite{AT}, motivated by the study of scattering amplitudes
in $\mathcal{N}=4$ supersymmetric Yang-Mills (SYM) theory, defined
the {\it amplituhedron\/} $A_{n,k,m} = A_{n,k,m}(Z)$ 
as the image of the nonnegative Grassmannian
$Gr^{\geq}(k,n)$ under the ``linear projection''
$$
\tilde Z: Gr^{\geq 0}(k,n)\to Gr(k,k+m),\quad
[A]\mapsto [A\,Z^T]
$$
induced by a totally positive $(k+m)\times n$ matrix $Z$, for $0\leq m\leq n-k$.
The case $m=4$ is of importance for physics.

In general, the amplituhedron $A_{n,k,m}$ has quite mysterious geometric
and combinatorial structure.
Here are a few special cases where its structure was understood better.
For $m=n-k$, $A_{n,k,n-k}$ is isomorphic to the nonnegative Grassmannian 
$Gr^{\geq 0}(k,n)$.  For $k=1$, $A_{n,1,m}$ is
(the projectivization of) the cyclic polytope $C(n,m)$.
For $m=1$, Karp and Williams \cite{KaW} showed that the structure
of the amplituhedron $A_{n,k,1}$ is equivalent to the complex of bounded
regions of the cyclic hyperplane arrangement.
In general, the relationship between the amplituhedron $A_{n,k,m}$ 
and polyhedral subdivisions is yet to be clarified.

\end{document}